\newtheorem{theorem}{Theorem}[section] 
\newtheorem{claim}[theorem]{Claim}
\newtheorem{tft}[theorem]{The Freeness Theorem}
\newtheorem{conclusion}[theorem]{Conclusion}
\newtheorem{observation}[theorem]{Observation}
\theoremstyle{definition}
\newtheorem{definition}[theorem]{Definition}
\newtheorem{explanation}[theorem]{Explanation}
\newtheorem{discussion}[theorem]{Discussion}
\newtheorem{fact}[theorem]{Fact}
\newtheorem{convention}[theorem]{Convention}
\theoremstyle{remark}
\newtheorem{remark}[theorem]{Remark}
\newtheorem{question}[theorem]{Question}
\newtheorem{notation}[theorem]{Notation}
\newtheorem{context}[theorem]{Context}
\newcommand{\pp}{{\rm pp}}
\newcommand{\cp}{{\rm c.p.}}
\newcommand{\Ax}{{\rm Ax}}
\newcommand{\fr}{{\rm fr}}
\newcommand{\add}{{\rm add}}
\newcommand{\Ker}{{\rm Ker}}
\newcommand{\Reg}{{\rm Reg}}
\newcommand{\Ext}{{\rm Ext}}
\newcommand{\ZFC}{{\rm ZFC}}
\newcommand{\End}{{\rm End}}
\newcommand{\NTDC}{{\rm NTDC}}
\newcommand{\otp}{{\rm otp}}
\newcommand{\pcf}{{\rm pcf}}
\newcommand{\ini}{{\rm ini}}
\newcommand{\Ord}{{\rm Ord}}
\newcommand{\MA}{{\rm MA}}
\newcommand{\TDC}{{\rm TDC}}
\newcommand{\BB}{{\rm BB}}
\newcommand{\Hom}{{\rm Hom}}
\newcommand{\ann}{{\rm ann}}
\newcommand{\bd}{{\rm bd}}
\newcommand{\id}{{\rm id}}
\newcommand{\tcf}{{\rm tcf}}
\newcommand{\fin}{{\rm fin}}
\newcommand{\Rang}{{\rm Rang}}
\newcommand{\rest}{{\restriction}}
\newcommand{\dom}{{\rm dom}}
\newcommand{\wilog}{{\rm without loss of generality}}
\newcommand{\Wilog}{{\rm Without loss of generality}}
\newcommand{\then}{{\underline{then}}}
\newcommand{\when}{{\underline{when}}}
\newcommand{\oor}{{\underline{or}}}
\newcommand{\where}{{\underline{where}}}
\newcommand{\Then}{{\underline{Then}}}
\newcommand{\Iff}{{\underline{iff}}}
\newcommand{\mn}{{\medskip\noindent}}
\newcommand{\sn}{{\smallskip\noindent}}
\newcommand{\cA}{{\mathscr A}}
\newcommand{\bbI}{{\mathbb I}}
\newcommand{\gB}{{\mathfrak B}}
\newcommand{\gA}{{\mathfrak A}}
\newcommand{\cH}{{\mathscr H}}
\newcommand{\cF}{{\mathscr F}}
\newcommand{\bbN}{{\mathbb N}}
\newcommand{\bbP}{{\mathbb P}}
\newcommand{\cP}{{\mathscr P}}
\newcommand{\gU}{{\mathfrak U}}
\newcommand{\bbQ}{{\mathbb Q}}
\newcommand{\bbZ}{{\mathbb Z}}
\newcommand{\bbR}{{\mathbb R}}
\newcommand{\cS}{{\mathscr S}}
\newcommand{\gx}{{\mathfrak x}} 
\newcommand{\varp}{{\varepsilon}} 
\newcommand{\cU}{{\mathscr U}}
\newcommand{\cf}{{\rm cf}}
\newcommand{\pr}{{\rm pr}}
\def\mathunderaccent#1#2 {\let\theaccent#1\skewfactor#2
\mathpalette\putaccentunder}
\def\putaccentunder#1#2{\oalign{$#1#2$\crcr\hidewidth
\vbox to.2ex{\hbox{$#1\skew\skewfactor\theaccent{}$}\vss}\hidewidth}}
\newenvironment{PROOF}[2][\proofname.]
   {\begin{proof}[#1]}
   {\end{proof}}
\begin{document}

\title[Abelian groups, pcf and black boxes]
{Quite free complicated abelian groups, pcf and Black Boxes}
\author {Saharon Shelah}
\address{Einstein Institute of Mathematics\\
Edmond J. Safra Campus, Givat Ram\\
The Hebrew University of Jerusalem\\
Jerusalem, 91904, Israel\\
 and \\
 Department of Mathematics\\
 Hill Center - Busch Campus \\ 
 Rutgers, The State University of New Jersey \\
 110 Frelinghuysen Road \\
 Piscataway, NJ 08854-8019 USA}
\email{shelah@math.huji.ac.il}
\urladdr{http://shelah.logic.at}
\thanks{The author thanks the Israel Science Foundation for support of
this paper, Grant No. 1053/11. Publication 1028.  References like
\cite[Th0.2=Ly5]{Sh:950} means the label of Th 0.2 is y5.
The author thanks Alice Leonhardt for the beautiful typing.  The
reader should note that the version in my website is usually more
updated that the one in the mathematical archive.
First typed January 25, 2012}

\subjclass[2010]{Primary: 03E04, 03E75; Secondary: 20K20, 20K30}

\keywords {cardinal arithmetic, pcf, black box, Abelian groups,
  $\lambda$-free, the TDC, the trivial dual conjecture, trivial endomorphism
  conjecture, forcing, independence results}




\date{January 18, 2019}

\begin{abstract}
We would like to build
Abelian groups (or $R$-modules) which on the one hand are quite free,
say $\aleph_{\omega +1}$-free, and on the other hand, are complicated
in suitable sense.  We choose as our test problem having 
no non-trivial homomorphism to
$\bbZ$ (known classically for $\aleph_1$-free, recently for
$\aleph_n$-free).  We succeed to prove the existence of 
even $\aleph_{\omega_1 \cdot n}$-free ones.
This requires building $n$-dimensional black boxes, which are quite
free.  This combinatorics is of self interest and we believe will be
useful also for other purposes.
On the other hand, modulo suitable large cardinals, we prove that
it is consistent that every
$\aleph_{\omega_1 \cdot \omega}$-free Abelian group has non-trivial
homomorphisms to $\bbZ$.
\end{abstract}

\maketitle
\numberwithin{equation}{section}
\setcounter{section}{-1}
\newpage

\centerline {Anotated Content}
\bigskip

\noindent
\S0 \quad Introduction, (labels y,z), pg.\pageref{Introduction}

\S(0A) \quad Abelian groups and modules, pg.\pageref{Abelian}

\S(0B) \quad Notation, pg.\pageref{Notation}
\bigskip

\noindent
\S1 \quad Black Boxes, (label a), pg.\pageref{Black}
\mn
\begin{enumerate}
\item[${{}}$]  [We prove the existence of $n$-dimensional black boxes
  as in \cite{Sh:883}, which are, e.g. $\aleph_{\omega \cdot n}$-free
  and even $\aleph_{\omega_1 \cdot n}$-free.  It is self contained except some
  quotation concerning pcf.]
\end{enumerate}
\bigskip

\noindent
\S2 \quad Building Abelian groups, (label d), pg.\pageref{Building}
\mn
\begin{enumerate}
\item[${{}}$]  [Here we prove the existence of
 $\aleph_{\omega_1 \cdot n}$-free Abelian group $G$ with no 
non-trivial homomorphism into $\bbZ$.]
\end{enumerate}
\bigskip

\noindent
\S3 \quad Forcing, (label g), pg.\pageref{3}
\mn
\begin{enumerate}
\item[${{}}$]  [We prove the consistency of ``for every
  $\aleph_{\omega_1 \cdot \omega}$-free Abelian group $G,\Hom(G,\bbZ) \ne
  0$".  Moreover, every such $G$ is a Whitehead group.]
\end{enumerate}
\newpage

\section {Introduction} \label{Introduction}
\bigskip

\subsection {Abelian Groups} \label{Abelian} \
\bigskip

We would like to determine the supremum of all $\lambda$ for which we can prove
 $\TDC_\lambda$, so, dually, the  minimal $\lambda$ such that consistently we
have $\NTDC_\lambda$ which means the failure of $\TDC_\lambda$,
the trivial dual conjecture for $\lambda$, where:
\mn
\begin{enumerate}
\item[$(\TDC_\lambda)$]  \quad there is a $\lambda$-free Abelian
  group $G$ such that $\Hom(G,\bbZ)=0$.
\end{enumerate}
\mn
This seems the weakest algebraic statement of this kind; it is consistent
that the number is $\infty$,
as if $\bold V = \bold L$ then $\TDC_\lambda$ holds for every
$\lambda$ (see, e.g. \cite{GbTl12}).  On the one hand by 
Magidor-Shelah \cite{MgSh:204}, for
$\lambda = \min\{\lambda: \lambda$ is a fixed point, that is $\lambda = 
\aleph_\lambda\}$, $\NTDC_\lambda$ is
consistent,  as more is proved there:
consistently ``$\lambda$-free $\Rightarrow$ free".  On
the other hand, since long ago we know the following 
for $\lambda = \aleph_1$ and
recently by \cite{Sh:883}, we know that for $\lambda = \aleph_n$
there are examples using the $n$-BB ($n$-dimensional
black boxes) introduced there (for every $n$).  
Subsequently those were used for more
complicated algebraic relatives in G\"obel-Shelah \cite{GbSh:920},
G\"obel-Shelah-Str\"ungman \cite{GbShSm:981} 
and G\"obel-Herden-Shelah \cite{GbHeSh:970}.  In
\cite{Sh:898} we have several close approximations to proving in ZFC the
existence for $\aleph_\omega$, that is $\TDC_{\aleph_\omega}$
using 1-black boxes.

Here we finally fully prove that $\TDC_{\aleph_\omega}$ holds and much
more;  $\lambda = \aleph_{\omega_1 \cdot \omega}$
is the first cardinal for which $\TDC_\lambda$ cannot be proved in
$\ZFC$.  The existence proof for $\lambda' < \lambda$ is a major
result here, relying on existence proof of quite free $n$-black boxes,
(in \S1) which use results on $\pcf$ (see \cite{Sh:1008}).  
For complementary consistency results we start with the universe forced in
\cite{MgSh:204} and then we force with a c.c.c. forcing notion making 
``$\MA + 2^{\aleph_0}$ large" but we have to work to show the desired result.

Of course, we can get better results ($\mu^+$-free) when $\mu \in
\bold C_\theta$ (see Definition \ref{y12}) is so called 1-solvable or 
$2^\mu = 2^{< \Upsilon} < 2^\Upsilon$ and $\Upsilon < 2^\mu$.

Note a point which complicates our work relative to previous ones: 
the amount of freeness
(i.e. the $\kappa$ such that we demand $\kappa$-free) and the
cardinality of the structure are markedly different.  In \cite{Sh:898}
this point is manifested when we construct say $G$ of cardinality
$\lambda$ which is $\mu^+$-free where $\mu \in \bold C_{\aleph_0}$ or
$\mu \in \bold C_{\aleph_1}$ and $\lambda = 2^\mu$ or 
$\min\{\lambda:2^\lambda > 2^\mu\}$.  The ``distance" is even larger
in \cite{Sh:883}.

An interesting point here is that for many non-structure problems we
naturally end up with two incomparable proofs.  One is when we have a
$\mu^+$-free $\cF \subseteq {}^\partial \mu$ of cardinality
$\lambda,\lambda$ as above.  In this case the amount of freeness is
large.  In the other, we use the black box from Theorem \ref{a51}.
But we may like to use more sophisticated black boxes, say start with
$\lambda_\ell,\mu_\ell(\ell \le \bold k)$, a black box $\bold x$ as in
Theorem \ref{a51} and combine it with \cite{Sh:775}.  The quotients
$G/G_{\delta +1},\delta$ a limit ordinal are close to being 
$\lambda^+_{\bold k}$-free,
replacing free by direct sums of small subgroups.

Recall from \cite[\S3]{Sh:309}: if we are given BB approximating
models with universe, e.g. $\kappa_2$ by ``guesses of cardinality
$\kappa_1$", and usually models $\kappa_2 = \kappa^{\kappa_1}_2$ then 
we can construct models of cardinality $\kappa_2$ quite freely except
the ``corrections" toward avoiding, e.g. undesirable endomorphisms,
i.e. for each approximation of such endomorphisms given by the BB is
seen as a ``task" how to avoid that in the end there will be an
endomorphism extending the one given by the approximation.  The
``price" is that we make the construction not free, but between the
various approximations there is little interaction.  
This will hopefully help in
\cite{Sh:F918}, which follows \cite{Sh:897} to use $\partial >
\aleph_0$ and here to try to sort out the complicated cases like
$\End(G) \cong R$.  Maybe we can get a neater proof.

In \cite{Sh:54}, \cite{Sh:51} we suggested that combinatorial proofs
from \cite[Ch.VIII]{Sh:a}, \cite[Ch.VIII]{Sh:c}, should be useful for
proving the existence of many non-isomorphic structures, as well as
rigid and indecomposable ones.  The most successful case were black
boxes applied to Abelian groups and modules first applied in
\cite{Sh:172}, \cite{Sh:227}, that is, 
\mn
\begin{enumerate}
\item[$(A)$]   for separable Abelian $p$-groups $G$, 
proving the existence of ones of
cardinality $\lambda = \lambda^{\aleph_0}$ with only so called small
endomorphisms; (\cite{Sh:172}); 
\sn
\item[$(B)$]  Let $R$ be a ring whose additive group $R^+$ is
  cotorsion-free, i.e. $R^+$ is reduced and has no subgroups
  isomorphic to $\bbZ/p \bbZ$ or to the $p$-adic integers.  For
  $\lambda = \lambda^{\aleph_0} > |R|$ there is an abelian group $G$
  of cardinality $\lambda$ whose endomorphism ring is isomorphic to
  $R$ and as an $R$-module it is $\aleph_1$-free (\cite[Th.0.1,pg.40]{Sh:227}).
\end{enumerate}
\mn
We can relax the demands on $R^+$ and may require that $G$ extends a
suitable group $G_0$ such that $R$ is realized by $\End(G)$ modulo a
suitable ideal of ``small" endomorphisms.
\mn
\begin{enumerate}
\item[$(C)$]  Let $R$ be a ring whose additive group is the completion
  of a direct sum of copies of the $p$-adic integers.  If
 $\lambda^{\aleph_0} \ge |R|$ then there exists a separable Abelian $p$-group
$G$ with so called basic subgroup of cardinality $\lambda$ and $R =
\End(G)/\End_s(G)$.  As usual we get $\End(G) = \End_s(G) \oplus R$;
(\cite[Th.0.2,pg.41]{Sh:227}).
\end{enumerate} 
\mn
On previous history of those algebraic problems see \cite{EM02}, \cite{GbTl12}.
Quite many works using black boxes follow, starting with Corner-G\"obel
\cite{CoGb85},  see again \cite{EM02}, \cite{GbTl12}.  On Black boxes
in set theory with weak versions of choice see \cite[\S3A]{Sh:1005}
with no choice \cite[\S3B]{Sh:1005} and for $\bold k$-dimensional
maybe see \cite{Sh:F1303}.

On further applications of those black boxes continuing the present
work, mainly representation of a ring $R$ and the endomorphism ring of
a quite free Abelian group, see \cite{Sh:1045}.
\bigskip

\begin{discussion}  
\label{y6}
1) Note that usually, the known constructions were either for
$\lambda$-free $R$-module of cardinality $\lambda$ using a
non-reflecting $S \subseteq S^\lambda_{\aleph_0}$ with diamond \underline{or}
$\aleph_1$-free of some cardinality $\lambda$ (mainly $\lambda =
(\mu^{\aleph_0})^+$ but also in some other cases) many 
times using a black box (see \cite{Sh:309}) or ``the elevator"
(see \cite{GbTl12}).  
In the former we use induction on $\alpha < \lambda$ and 
each $\alpha$ has ``one task".

Using black boxes in the nice versions, we have for each 
$\delta \in S$ a perfect
set of pairwise isomorphic tasks.

To deal with getting an $\aleph_n$-free Abelian group $G$ with
$\Hom(G,\bbZ) = 0$; the $n$-dimensional black boxes 
actually constructed and used in
\cite{Sh:883} were
products of black boxes from \cite{Sh:309}, each black box separately
is only $\aleph_1$-free but the product of $k$ gives
$\aleph_k$-freeness.  Here things are more complicated.

\noindent
2) Here cardinality and freeness differ.

\noindent
3) Note that the versions of freeness of BB 
in \cite{Sh:898} and here are not the same.
\end{discussion}
\bigskip

\subsection {Notation} \label{Notation}\
\bigskip

\noindent
Recall (on $\pp$ see \cite{Sh:g} but the reader can just use \ref{y16} below).
\begin{definition}
\label{y12}
Let $\bold C = \{\mu:\mu$ strong limit singular and pp$(\mu) =^+ 2^\mu\}$

\[
\bold C_\kappa = \{\mu \in \bold C:\cf(\mu) = \kappa\}.
\]
\end{definition}

\begin{claim}
\label{y16}
\mn
\begin{enumerate}
\item[$(a)$]  $\mu \in \bold C$ if $\mu$ is strong limit singular
of uncountable cofinality
\sn
\item[$(b)$]  if $\mu = \beth_\delta > \cf(\mu)$ and
$\delta = \omega_1$ or just $\cf(\delta) > \aleph_0$, \then \, 
$\mu \in \bold C_{\cf(\mu)}$ and for a club (= a closed unbounded subset) of 
$\alpha < \delta$ we have $\beth_\alpha \in \bold C$.
\end{enumerate}
\end{claim}

\begin{PROOF}{\ref{y16}}
Clause (a) holds by \cite[Ch.II,\S2]{Sh:g} and clause (b) by 
\cite[Ch.IX,\S5]{Sh:g}.  
\end{PROOF}

\begin{explanation}
\label{y29}
1) A reader, particularly with algebraic background may wonder how the
ideals defined in Definition \ref{y32} below are used in the algebraic
construction.  For an ideal $J$ on a set $S$ we may try to find an
Abelian group $G_1$ extending the free Abelian group $G_0 =
\oplus\{\bbZ x_s:s \in S\}$ such that the quotient
$G_1/\oplus\{\bbZ_s:s \in S_1\}$ is free for every $S_1 \in J$.  In
particular we would like to have some $h_0 \in \Hom(G_0,\bbZ)$ which cannot be
extended to a homomorphism from $G_1$ to $\bbZ$.  Copies of such
tuples $(S,J,G_1,G_0,h_0)$ are used as ``the building block" in the
constructions, so finding such examples is crucial; we find ones, see
in \S2; more in \cite{Sh:1045}.

\noindent
2) Concerning Observation \ref{y34}, note that the product $J_1 \times
J_2$ is not symmetric (even up to isomorphisms) because if
e.g. $\partial < \kappa$ then $J_\partial \times J_\kappa = \{A
\subseteq \partial \times \kappa$: for some $i < \partial,j < \kappa$
we have $A \subseteq (i \times \kappa) \cup (j \times \partial)\}$;
but $J_\kappa \times J_\partial$ has no such representation. 
\end{explanation}

\begin{definition}
\label{y32}
1) For a set $S$ of ordinals with no last member let $J^{\bd}_S$ be
 the ideal consisting of the bounded subsets of $S$.

\noindent
2) If $J_\ell$ is an ideal on $S_\ell$ for $\ell=1,2$ \then \, $J_1 \times
   J_2$ is the ideal on $S_1 \times S_2$ consisting of the $S
   \subseteq S_1 \times S_2$ such that $\{s_1 \in S_1:\{s_2 \in
   S_2:(s_1,s_2) \in S\} \notin J_2\}$ belongs to $J_1$.

\noindent
3) If $\delta_1,\delta_2$ are limit ordinals, $J_\ell$ is an ideal on
   $\delta_\ell$ and $\delta_1 \cdot \delta_2 = \delta_3$ \then \,
$J_1 * J_2$ is the following ideal on $\delta_3$: it consists of 
$\{\{\delta_1 \cdot i+j:i < \delta_2,j < \delta_1$ and 
$(j,i) \in A\}:A \in J_1 \times J_2\}$.

\noindent
4) If $\delta_1,\delta_2$ are limit ordinals, $J_\ell$ is an ideal on
$\delta_\ell$ for $\ell=1,2$ and $\delta_1 \cdot
\delta_2 = \delta_3$ \then \, $J_1 \odot J_2$ is the following ideal on
$\delta_3$: it consists of $\{\{\delta_1 \cdot i+j:i < \delta_2,j <
\delta_1$ and $(i,j) \in A\}:A \in J_2 \times J_1\}$.
\end{definition}

\begin{observation}
\label{y34}
If $\partial \ge \kappa$ are regular cardinals 
\then \, $J^{\bd}_\partial \times
J^{\bd}_\kappa$ is isomorphic to $J^{\bd}_\partial * J^{\bd}_\kappa$
which include in $J^{\bd}_\partial \odot J^{\bd}_\kappa$ which is
isomorphic to $J^{\bd}_\kappa \times J^{\bd}_\partial$. 
\end{observation}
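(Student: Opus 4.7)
The plan is to treat the statement as a chain of four comparable ideals, all living (up to bijection) on the ordinal $\partial\cdot\kappa$, which equals $\kappa$ since $\partial<\kappa$ and $\kappa$ is regular. Two of the three assertions (the flanking isomorphisms) will be direct unpackings of Definition \ref{y32}(3),(4), while the single nontrivial step is the middle inclusion, whose proof will use the regularity of $\kappa$.

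First, I would set up the canonical bijection $\pi:\partial\times\kappa\to\partial\cdot\kappa$ given by $\pi(j,i)=\partial\cdot i+j$ (with $j<\partial$, $i<\kappa$), and the companion bijection $\pi':\kappa\times\partial\to\partial\cdot\kappa$ given by $\pi'(i,j)=\partial\cdot i+j$. Unwinding Definition \ref{y32}(3), a set $B\subseteq\partial\cdot\kappa$ is in $J^{\bd}_\partial\ast J^{\bd}_\kappa$ exactly when $\pi^{-1}(B)\in J^{\bd}_\partial\times J^{\bd}_\kappa$, so $\pi$ is an isomorphism. Likewise, unwinding Definition \ref{y32}(4), $B\in J^{\bd}_\partial\odot J^{\bd}_\kappa$ iff $(\pi')^{-1}(B)\in J^{\bd}_\kappa\times J^{\bd}_\partial$, so $\pi'$ is an isomorphism. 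These two statements are then immediate from the definitions and require no further argument.

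For the inclusion $J^{\bd}_\partial\ast J^{\bd}_\kappa\subseteq J^{\bd}_\partial\odot J^{\bd}_\kappa$, I would take $B$ in the left-hand ideal and view it by its columns $C_j:=\{i<\kappa:\partial\cdot i+j\in B\}$ for $j<\partial$. By the definition of the $\ast$-ideal, there is $\alpha<\partial$ such that $C_j$ is bounded in $\kappa$ for every $j\in[\alpha,\partial)$; pick $\beta_j<\kappa$ with $C_j\subseteq\beta_j$. Set $\beta^\ast:=\sup\{\beta_j:j\in[\alpha,\partial)\}$. The only point where regularity enters — and indeed the only genuinely nontrivial step of the whole observation — is that $\beta^\ast<\kappa$, which holds because $\kappa$ is regular and $|[\alpha,\partial)|\le\partial<\kappa$. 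Now for any $i\ge\beta^\ast$, the row $R_i:=\{j<\partial:\partial\cdot i+j\in B\}$ must be contained in $\alpha$: if some $j\in R_i$ had $j\ge\alpha$, then $i\in C_j\subseteq\beta_j\le\beta^\ast$, contradicting $i\ge\beta^\ast$. Hence $R_i$ is bounded in $\partial$ for all $i\ge\beta^\ast$, so the set $\{i<\kappa:R_i\notin J^{\bd}_\partial\}\subseteq\beta^\ast$ is bounded in $\kappa$, which is exactly membership of $B$ in $J^{\bd}_\partial\odot J^{\bd}_\kappa$.

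The main obstacle is really just the notational bookkeeping: one has to keep straight that $J_1\times J_2$ and $J_2\times J_1$ describe subsets of $\partial\cdot\kappa$ by "columns indexed by $j<\partial$" versus "rows indexed by $i<\kappa$" of the decomposition $\partial\cdot i+j$, and that the $\ast$- and $\odot$-ideals correspond respectively to each of these two viewpoints. Once that dictionary is fixed, the two isomorphisms are purely formal, and the one substantive inclusion collapses to the elementary fact that a supremum of $\le\partial$ ordinals below a regular $\kappa>\partial$ remains below $\kappa$.
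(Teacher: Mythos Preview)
Your proof is correct. The paper does not actually supply a proof of Observation~\ref{y34}; it is stated as a bare observation, presumably because the author regards it as routine. Your argument is exactly the natural one: the two flanking isomorphisms are immediate from Definitions~\ref{y32}(3),(4) once the bijections $\pi,\pi'$ are written down, and the middle inclusion reduces to the single genuine point that a supremum of at most $\partial$ many ordinals below $\kappa$ stays below $\kappa$ by regularity. There is nothing to compare against, and nothing to correct.
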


\begin{PROOF}{\ref{y34}}
Should be clear but we elaborate the first equivalence.

Why $J' = J^{\bd}_\partial \times J^{\bd}_\kappa$ is isomorphic to
$J'' = J^{\bd}_\partial * J^{\bd}_\kappa$?

Note that $J'$ is an ideal on $\partial \times \kappa$ and $J''$ is an
ideal on $\partial \cdot \kappa$.  We define a function $\pi:\partial
\times \kappa \rightarrow \partial \cdot \kappa$ by:
\mn
\begin{enumerate}  
\item[$(*)$]  $\pi((i,j)) = \partial \cdot j +i$, so $\pi$ is a
  one-to-one function from $\partial \times \kappa$ onto $\partial
  \cdot \kappa$ by the rules of ordinal division.
\end{enumerate}
\mn
It suffices to prove that for any $A \subseteq \partial \times \kappa$
that $A \in J' \Leftrightarrow \pi''(A) \in J''$; so fix $A
\subseteq \partial \times \kappa$ and below we have $\bullet_i
\Leftrightarrow \bullet_{i+1}$ hence $\bullet_1 \Leftrightarrow \bullet_4$
which suffices, when:
\mn
\begin{enumerate}
\item[$\bullet_1$]  $A \in J'$
\sn
\item[$\bullet_2$]  $\{s_1 \in \partial:\{s_2 \in \kappa:(s_1,s_2) \in
  A\} \notin J^{\bd}_\kappa\} \in J^{\bd}_\partial$  
\sn
\item[$\bullet_3$]  $\{i < \partial:\{j<\kappa:\partial j +i \in
  \pi''(A)\} \notin J^{\bd}_\kappa\} \in J^{\bd}_\partial$ 
\sn
\item[$\bullet_4$]  $\pi''(A) \in J''$.
\end{enumerate}
\mn
That is, $\bullet_1 \leftrightarrow \bullet_2$ by the definition of
$J'$ and $\bullet_2 \leftrightarrow \bullet_3$ by the choice of $\pi$
and $\bullet_3 \leftrightarrow \bullet_4$ by the definition of $J''$.
\end{PROOF}

\begin{definition}
\label{y37}
1) We say $\cF \subseteq {}^S X$ is $(\theta,J)$-free
\when\footnote{E.g. \cite{Sh:g}, this version is used.
 Sometimes we even demand $\alpha < \alpha_* \Rightarrow 
\{s \in S:\eta_\alpha(s) \in \{\eta_\beta(t):\beta < \alpha,t \in I\}\} 
\in J$.  But in the main case ``$J$ is a $\theta$-complete filter 
on $\theta$", the versions in \ref{y37}(1),(2) are equivalent, see 
\ref{a25}.} \, $J$ is an ideal on $S$ and for every $\cF' \subseteq
\cF$ of cardinality $< \theta$ there is a sequence
$\langle w_\eta:\eta \in \cF' \rangle$ such that: $\eta \in \cF'
\Rightarrow w_\eta \in J$ and if $\eta_1 \ne \eta_2
\in \cF'$ and $s \in S \backslash (w_{\eta_1} \cup w_{\eta_2})$ 
then $\eta_1(s) \ne \eta_2(s)$.

\noindent
2) We say $\cF \subseteq {}^S X$ is $[\theta,J]$-free \when \, 
$J$ is an ideal on $S$ and for
every $\cF' \subseteq \cF$ of cardinality $< \theta$ there is a list $\langle
   \eta_\alpha:\alpha < \alpha_*\rangle$ of $\cF'$ such that: if
$\alpha < \alpha_*$ then the set $w_\alpha := \{s \in S:\eta_\alpha(s) \in
\{\eta_\beta(s):\beta < \alpha\}\}$ belongs to $J$.

\noindent
3) Let $\theta$-free or $(\theta)$-free mean $(\theta,J)$-free
\when \, $S \subseteq \Ord,J = J^{\bd}_S$.

\noindent
4) We say $\mu$ is 1-solvable \when \, $\mu$ is singular strong limit
   and there is a $\mu^+$-free family $\cF \subseteq {}^{\cf(\mu)}\mu$ of
cardinality $2^\mu$.

\noindent
5) We say $\mu$ is $(\theta,1)$-solvable \when \, above we weaken
 ``$\mu^+$-free" to ``$\theta$-free".

\noindent
6) We say $\cF \subseteq {}^S X$ is weakly ordinary \when \, each $\eta
   \in \cF$ is a one-to-one function.  We say $\cF \subseteq {}^\gamma
   \Ord$ is ordinary \when \, each $\eta \in \cF$ is an increasing function.
\end{definition}

\begin{claim}
\label{z53}
Assume $\theta > \partial$ and $\partial$ is regular, $J$ is an ideal 
on $\partial$ extending $[\partial]^{< \partial}$ 
and $\cF \subseteq {}^\partial\Ord$ 
\underline{and}\footnote{We can replace ``$< \partial$" by ``$\in J'$"
  when $J' \subseteq J$ is a $\partial$-complete ideal} $\eta \ne \nu
\in \cF \Rightarrow |\{i < \partial:\eta(i) \in \Rang(\nu)\}| < \partial$.

\noindent
1) We have $\cF$ is $(\theta,J)$-free \Iff \, $\cF$ is $[\theta,J]$-free.

\noindent
2) If every $\eta \in \cF$ is one-to-one, \then \, we can 
add in Definition \ref{y37}(2), $\eta_\alpha(s) \notin 
\{\eta_\beta(t):\beta < \alpha,t \in S\}$. 
\end{claim}

\begin{remark}
\label{z55}
1) We may consider only the case $i \ne j \Rightarrow 
\eta(i) \ne \nu(j)$ in \ref{y37}(1), \ref{a3}(6), \ref{a12}(1).

\noindent
2) Compare with \cite{Sh:g}, \cite{Sh:898}.

\noindent
3) Because of \ref{z53} the difference between $(\theta,J)$-free and
   $[\theta,J]$-free is not serious.  For $\bold k$-c.p. $\bold x$ see
Definition \ref{a6}; there we use only the latter version so do not write
   $[\theta,J]$. 
\end{remark}

\begin{PROOF}{\ref{z53}}
1) It is enough to prove for every $\cF \subseteq {}^\partial \Ord$
of cardinality $< \theta,\cF$ is $(\theta,J)$-free iff $\cF$ is
$[\theta,J]$-free.

First, if $\cF$ is $[\theta,J]$-free, then there is a sequence
$\langle \eta_\alpha:\alpha < \alpha_*\rangle$ enumerating $\cF$ as in
Definition \ref{y37}(2), i.e. $\alpha < \alpha_* \Rightarrow
w^1_\alpha := \{i < \partial:\eta_\alpha(i) \in \{\eta_\beta(i):\beta
< \alpha\}\} \in J$.
Define $w_\eta$ by $\eta = \eta_\alpha \Rightarrow w_\eta =
w^1_\alpha$; easily $\langle w_\eta:\eta \in \cF\rangle$ is as required
in Definition \ref{y37}(1).  

Second, if $\cF$ is $(\theta,J)$-free,
then there is $\langle w_\eta:\eta \in \cF\rangle$ which is as required
in Definition \ref{y37}(1).

Let $\langle \eta^1_\alpha:\alpha < \alpha_*\rangle$ list $\cF$ and by
induction on $n$ for each $\alpha$ we define $u_{\alpha,n}$ as follows:
\mn
\begin{enumerate}
\item[$(*)^1_\alpha$]  $(a) \quad u_{\alpha,0} = \{\alpha\}$
\sn
\item[${{}}$]  $(b) \quad u_{\alpha,n+1} = u_{\alpha,n} \cup \{\beta <
\alpha_*$: for some $i \in \partial \backslash w_\beta$ we have

\hskip25pt  $\eta_\beta(i) \in \{\eta_\gamma(i):\gamma \in u_{\alpha,n}\}\}$.
\end{enumerate}
\mn
Now
\mn
\begin{enumerate}
\item[$(*)^2_\alpha$]  $|u_{\alpha,n}| \le \partial$ and $u_{\alpha,n}
  \subseteq \alpha_*$.
\end{enumerate}
\mn
[Why?  Trivially $u_{\alpha,n} \subseteq \alpha_*$.  Also
$|u_{\alpha,0}| = 1 \le \partial$ and if $|u_{\alpha,n}| \le \partial$
then $|u_{\alpha,n+1}| \le |u_{\alpha,n}| + \sum\limits_{i < \partial}
\, \sum\limits_{\gamma \in u_{\alpha,n}} |\{\beta < \alpha_*:\beta$
satisfies $i \notin w_\beta \wedge \eta_\beta(i) = \eta_\gamma(i)\}|
= |u_{\alpha,n}| + \sum\limits_{i < \partial} \, 
\sum\limits_{\gamma \in u_{\alpha,n}} \, 1 \le \partial + \partial
\cdot \partial \cdot 1 = \partial$.]

We define $u_\alpha$ by induction on $\alpha < \alpha_*$ as follows:
$u_\alpha = \bigcup\limits_{n} u_{\alpha,n} \backslash
\bigcup\limits_{\beta < \alpha} u_\beta$, so $\langle u_\alpha:\alpha
< \alpha_*\rangle$ is a partition of $\alpha_*$ to sets each of
cardinality $\le \partial$, so we can 
let $\langle \beta_{\partial \alpha +i}:i < i_\alpha \le \partial\rangle$ list
$u_\alpha$.  Let $\cU = \{\partial \alpha +i:\alpha < \alpha_*,i <
i_\alpha$ and $\beta_{\partial \alpha +i} \notin \cup\{u_\gamma:\gamma <
\alpha\}\}$, so $\{\beta_\gamma:\gamma \in \cU\}$ lists $\alpha_*$
with no repetitions and 
easily $\langle \eta_{\beta_\zeta}:\zeta \in \cU\rangle$
is a list as required in Definition \ref{y37}(2).  That is, let $\beta
= \beta_{\partial \alpha + i} = \beta(\gamma_\alpha +i),i <
i_\alpha$.  So $\{i < \partial:\eta_\beta(j) \in
\{\eta_\gamma(j):\gamma \in \cU \cap \beta\}$ if $J$ is a
$\partial$-complete ideal, then the union of the following sets:
$w^2_\beta := \{j < \partial:\eta_\beta(j) \in \{\eta_\gamma(j):\gamma
\in \cU \cap \beta_{\partial,\alpha}\}$ and $w^2_{\beta,\iota} = \{j
< \partial:\eta_\beta(j) = \eta_{\partial \alpha + \iota}(j)\}$ for
$\iota < i$.  Now each of those sets belong to $J$ (Why?  $w^2_\beta$
by the choice of the $u_{\gamma,n}$'s and the $u_\gamma$'s;
$w^2_{\beta,\iota}$ as it is included in 
$w_{\eta_{\beta(\partial \alpha +i)}}$).  So if $J$ is a
$\partial$-complete ideal we are done, and if not, by $\bullet$ of the
assumptio nof the claim, $\iota < i \Rightarrow
|w^2_{\beta,\iota}|< \partial$, so recalling $\partial$ is regular
$\bigcup\limits_{\iota < i} w^2_{\beta,\iota}$ has cardinality
$< \partial$ hence belongs to $J$, so as $J$ is an ideal we are done.

Pedantically
$\langle \eta'_\gamma:\gamma < \otp(\cU)\rangle$ is such a list when we
define $\eta'_\gamma$ for $\gamma < \otp(\cU)$ by $\eta'_{\otp(\zeta \cap \cU)}
= \eta_{\beta_\zeta}$.

\noindent
2) Similarly to the ``Second" in the proof that \ref{y37}(1) holds,
except that $(*)^1_\alpha(b)$ is:
\mn
\begin{enumerate}
\item[$(b)'$]  $u_{\alpha,n+1} = u_{\alpha,n} \cup \{\beta <
  \alpha_*$: for some $i \in \partial \backslash w_\beta,\eta_\beta(i)
  = \{\eta_\gamma(j):\gamma \in u_{\alpha,n},j < \partial\}\}$.
\end{enumerate}
\end{PROOF}

\begin{question}
1) If $\mu$ is strong limit $\aleph_0 = \cf(\mu) < \mu$ (but not
necessarily $\mu \in \bold C$), can we get the
freeness results of \cite{Sh:1008}.

\noindent
2) In the cases we have can we strengthen the $\chi$-BB by having
$F:\Lambda_{\bold x} \rightarrow \chi$ and demand $\eta_m(i) \in F(\bar\eta
\upharpoonleft (m,<i))$?

\noindent
2A) Is this preserved by products?
\end{question}
\newpage

\section {Black Boxes} \label{Black}

We generalize the $\bold k$-dimensional black box from 
\cite{Sh:883}, where we deal with the special case when
$\ell < \bold k \Rightarrow \partial_\ell = \aleph_0$ because this
seems natural for Abelian groups; the black boxes earlier to
\cite{Sh:883} were for $\bold k=1$.  

But here for Abelian groups the most interesting cases are when
$\{\partial_\ell:\ell < \bold k\} \subseteq \{\aleph_0,\aleph_1\}$.
In the cases we prove existence, the $\bold k$-dimensional black box 
is the product of black boxes, i.e. the ones for $\bold k = 1$.

The main result is Theorem \ref{a51} telling us that there are 
$\bold k$-dimensional black boxes which are quite free.

The central notion here is a combinatorial parameters, those objects
$(\bold x)$ consist of the relevant finitely many cardinals
$(\langle \partial_\ell:\ell < \bold k\rangle)$ and sets $(\langle
S_\ell:\ell < \bold k\rangle)$ and a family $(\Lambda)$ of sequences
$\langle \eta_\ell:\ell < \bold k\rangle$ with $\eta_\ell$ a sequence of
length $\partial_\ell$ of members of $S_\ell$.  Such objects are used
in the construction of Abelian groups $G$.  The point is that, on the
one hand, the relevant (algebraic) freeness of the Abelian group $G$
is deduced from (set theoretic) freeness of $\bold x$, i.e. of
$\Lambda$, and on the other hand, e.g. $\Hom(G,\bbZ)=0$ is deduced by
using the $\bold x$ having a black box (which is used in the construction).  
See more in \ref{a5d}.

\begin{convention}  
\label{a0}
1) $\bar\partial$ will denote a sequence $\langle \partial_\ell:\ell
< \bold k\rangle$ of regular cardinals or just limit ordinals 
of length $\bold k \ge 1$ 
and then $\partial(\ell) = \partial_\ell$ but note that 
$k = \bold k -1$ was used in \cite{Sh:883}; a major case is
$\bar\partial$ is constant,
i.e. $\bigwedge\limits_{\ell} \partial_\ell = \partial$ for some $\partial$.

\noindent
2) Let $\bold x,\bold y,\bold z$ denote combinatorial parameters, see
Definition \ref{a6} below.
\end{convention}

\begin{notation}  
\label{a3}
0) Here $\bar S = \langle S_\ell:\ell < \bold k\rangle$ and
$\bar\partial = \langle \partial_\ell = \partial(\ell):\ell < \bold k\rangle$. 

\noindent
1) Let $\bar S^{[\bar\partial]} = \prod\limits_{\ell < \bold k}
   {}^{\partial(\ell)}(S_\ell)$
and $\bar S^{[\bar\partial,u]} = \prod\limits_{\ell \in u}
{}^{\partial(\ell)}S_\ell$ for $u \subseteq \{0,\dotsc,\bold k -1\}$
 and if each $S_\ell$ is a set of ordinals let 
$\bar S^{< \bar\partial>} = \{\bar\eta \in 
\bar S^{[\bar\partial]}$: each $\eta_\ell$ is increasing$\}$ and
   similarly $\bar S^{<\bar\partial,u>}$.

\noindent
2) If $\bar\eta \in \bar S^{[\bar\partial]},m < \bold k$ and $i < 
\partial_m$ then\footnote{It is sometimes natural to replace ``$i <
\partial_\ell$" by ``$i$ a subset of $\partial_\ell$ from some
family $\cP_\ell$ and $\eta'_\ell = \eta_\ell \rest i$ when $\ell = m$", 
say using $J^{\bd}_{\aleph_1} *
J^{\bd}_{\aleph_1}$ as in \cite{Sh:898}.  In \cite{Sh:883} this
version was used.} \, $\bar\eta \upharpoonleft (m,i) = 
\bar\eta \upharpoonleft_{\bold x}(m,i)$ is $\langle
\eta'_\ell:\ell < \bold k\rangle$ where $\eta'_\ell$ is
   $\eta_\ell$ when $\ell < \bold k \wedge \ell \ne m$ and is
$\eta_\ell \rest \{i\}$ if $\ell = m$; this is closed to but not the same as 
in\footnote{but if we use
  tree like $\Lambda \subseteq \bar S^{[\bar\partial]}$, see
  \ref{a3}(6), the difference is small, what we use there is called
  here $\bar\eta \upharpoonleft (m,=i)$.} \cite{Sh:883}.  
Also for $w \subseteq \partial_m,\bar\eta
\upharpoonleft (m,= w)$ is defined as $\langle \eta'_\ell:\ell <
\bold k\rangle$ where $\eta'_\ell = \eta_\ell$ if $\ell < \bold k \wedge
\ell \ne m$ and $\eta'_\ell = \eta_\ell \rest w$ if $\ell = m$.  
Let $\bar\eta \upharpoonleft (m) = 
\langle \eta_\ell:\ell \ne m,\ell < \bold k\rangle$.

\noindent
3) If $\Lambda \subseteq \bar S^{[\bar\partial]},m < \bold k$ and $i
   < \partial_m$ \then \, $\Lambda \upharpoonleft_{\bold x}(m,i) = \{\bar\eta
   \upharpoonleft (m,i):\bar\eta \in \Lambda\}$; we define similarly
   $\Lambda \upharpoonleft_{\bold x}(\eta,=w)$.

\noindent
4) If $\Lambda \subseteq \bar S^{[\bar\partial]},m < \bold k$ and $i
   \le \partial_m$ \then \, $\Lambda \upharpoonleft_{\bold x}(m,< i) 
= \cup\{\Lambda \upharpoonleft (m,i_1):i_1 < i\}$.

\noindent
5) $\Lambda_{\bold x,\in u} = \cup\{\Lambda_{\bold x}
   \upharpoonleft(m,i):m \in u,i < \partial_m\}$ for 
$u \subseteq \{0,\dotsc,\bold k-1\}$.  We may write ``$< m$" instead of
``$\in m$" when ``$u = \{0,\dotsc,m-1\}$" and let
$\Lambda_{\bold x,m} = \Lambda_{\bold x,\in \{m\}}$.

\noindent
6) We say $\Lambda \subseteq \bar S^{[\bar\partial]}$ is tree-like
\when \, $\bar\eta,\bar\nu \in \Lambda,\bar\eta \upharpoonleft (m,i) = \bar
\nu \upharpoonleft (m,j)$ implies $\eta_m \rest i = \nu_m \rest j$ so
in particular it implies $i=j$.

\noindent
7) We say $\Lambda \subseteq \bar S^{<\bar\partial>}$ is normal \when \,:
   if $\bar\eta,\bar\nu \in \Lambda,m < \bold k,i,j < \partial_m$ and
   $\eta_m(i) = \nu_m(j)$ then $i=j$, (hence each $\nu_m$ is one-to-one;
   this follows from being tree-like).
\end{notation}

\noindent
We now define in Definition \ref{a5} the standard $\bold x$, as it is
more transparent than the general case (in \ref{a6}) but will 
not use it as the ZFC-existence results are not standard; see
explanation after Definition \ref{a5}.  The main difference is that in
the general, (i.e. not necessarily standard) version, we have the
extra parameter $J_\ell$, ideal on $\partial_\ell$.
\begin{definition}  
\label{a5}
1) We say $\bold x$ is a standard $\bar\partial$-c.p. (combinatorial
$\bar\partial$-parameter) \when \, $\bold x = (\bold
   k,\bar\partial,\bar S,\Lambda) = (\bold k_{\bold x},
\bar\partial_{\bold x},\bar S_{\bold x},\Lambda_{\bold x})$ and it satisfies:
\mn
\begin{enumerate}
\item[$(a)$]  $\bold k \in \{1,2,\ldots\}$ and let $k = k_{\bold x} =
  \bold k-1$, (this to fit the notation in \cite{Sh:883})
\sn
\item[$(b)$]  $\bar\partial = \langle \partial_\ell:\ell < \bold
  k\rangle$ is a sequence of regular cardinals; so 
$\partial_\ell = \partial_{\bold x,\ell}$
\sn
\item[$(c)$]  $\bar S = \langle S_\ell:\ell < \bold k\rangle,S_\ell$ a
set of ordinals so $S_\ell = S_{\bold x,\ell}$
\sn
\item[$(d)$]  $\Lambda \subseteq \bar S^{[\bar\partial]} =
\prod\limits_{\ell < \bold k} {}^{\partial(\ell)}(S_\ell)$, see \ref{a3}(1). 
\end{enumerate}
\mn 
2) If $\ell < \bold k \Rightarrow \partial_\ell = \partial$ we may
write $\partial$ instead of $\bar\partial$ in $(\bold
k,\bar\partial,\bar S,\Lambda)$ and may say combinatorial
$(\partial,\bold k)$-parameter.  
 If $\ell < \bold k \Rightarrow
\partial_\ell = \aleph_0$ we may omit $\bar\partial$ and write
``$\bold x$ is a combinatorial $\bold k$-parameter".  If $\ell < \bold k
\Rightarrow S_\ell = S$ we may write $S$ instead of $\bar S$.  Also we
may write $\bold k(\bold x)$ for $\bold k_{\bold x}$.

\noindent
3) We say $\bold x$ (or $\Lambda$) is ordinary \when \, (each $S_\ell$
is a set of ordinals and) $\bar\eta \in \Lambda \Rightarrow$ each
   $\eta_\ell$ is increasing.  We say $\bold x$ (or $\Lambda$) is
weakly ordinary \when \, $\bar\eta \in \Lambda \wedge m < \ell
g(\bar\eta) \Rightarrow \eta_m$ is one-to-one.
We say $\bold x$ is disjoint \when \,
$\langle S_{\bold x,m}:m < \bold k\rangle$ is a sequence of pairwise
disjoint sets.   We say $\bold x$ is ordinarily full
\when \, it is ordinary and $\Lambda_{\bold x} = 
\{\langle \eta_\ell:\ell < \bold k\rangle:\eta_\ell 
\in {}^{\partial(\ell)}(S_\ell)$ is increasing
for $\ell < \bold k\}$.  Similarly for weakly ordinary.

\noindent
4) We say $\bold y$ is a permutation of $\bold x$ \when \, for some
permutation $\pi$ of $\{0,\dotsc,\bold k -1\}$ we have $m < k
\Rightarrow \partial_{\bold x,m} = \partial_{\bold y,\pi(m)}$ and $m < k
\Rightarrow S_{\bold x,m} = S_{\bold y,\pi(m)}$ and $\Lambda_{\bold y} 
= \{\langle \eta_{\pi(m)}:m < \bold k\rangle:\langle \eta_m:m <
   \bold k\rangle \in \Lambda_{\bold x}\}$.

\noindent
5) We say $\bar\pi$ is an isomorphism from $\bold x$ onto $\bold y$
\when \,:
\mn
\begin{enumerate}
\item[$(a)$]  $\bold k_{\bold y} = \bold k_{\bold x}$ call it $\bold k$
\sn
\item[$(b)$]  $\bar\pi = \langle \pi_m:m \le \bold k \rangle$
\sn
\item[$(c)$]   $\pi_{\bold k}$ is a permutation of $\{0,\dotsc,\bold k
-1\}$
\sn
\item[$(d)$]  $\partial_{\bold x,m} = \partial_{\bold y,\pi_{\bold
      k}(m)}$ for $m < \bold k$
\sn
\item[$(e)$]  $\pi_m$ is a one-to-one function from $S_{\bold x,m}$
onto $S_{\bold y,\pi_{\bold k}(m)}$ for $m < \bold k$
\sn
\item[$(f)$]  $\langle \nu_m:m < \bold k \rangle \in \Lambda_{\bold
  y}$ \Iff \, for some 
$\langle \eta_m:m < \bold k\rangle \in \Lambda_{\bold x}$ we have
  $\nu_{\pi_{\bold k}(m)} = 
\langle \pi_m(\eta_m(i)):i < \partial_{\bold x,m}\rangle$.
\end{enumerate}
\end{definition}

\begin{discussion}
\label{a5d}
It may be helpful to the reader to indicate how such $\bold x$ helps
to construct, e.g. Abelian groups, for simplicity each $\partial_\ell$
is $\aleph_0$ (this suffices for constructing an $\aleph_{\omega \cdot n}$-free
$G$, which already is new).

First, let $\langle x_{\bar\eta}:\bar\eta \in \Lambda_{\bold x}
\upharpoonleft (m,i)$ for some $m$ and $i\rangle$ freely generate an
Abelian group $G_0$ and for such $\bar\eta \in \Lambda_{\bold x}$ 
we add elements
like $y_{\bar\eta,n} = \Sigma\{(\frac{i!}{n!}) 
(x_{\bar\eta \upharpoonleft (m,i)} + a_{\bar\eta,m} x_{\nu_{\bar\eta}}):
m < \bold k_{\bold x},i$ finite $\ge n\}$ for some $\nu_{\bar\eta} \in
\Lambda_{\bold x_1},n \in \bold k$ and $a_{\bar\eta,m} \in \bbZ$
getting $G_1 \supseteq G_0$.  Now on the one hand,
we like $G_1$ to be $\theta$-free and on the other hand, we like it,
e.g. to have no non-zero homomorphism into $\bbZ$.  For the second
task, we need a BB (black box) property, that is, for each possible
$\nu_{\bar\eta}$ to have for each
$\bar\eta \in \Lambda$, a homomorphism $h_{\bar\eta}$ from
$\Sigma\{\bbZ x_{\bar\eta \upharpoonleft (m,i)}:m < \bold k,i$
finite$\} \oplus \bbZ x_{\nu_{\bar\eta}}$ into $\bbZ$ 
such that $\{h_{\bar\eta}:\bar\eta \in
\Lambda\}$ is dense (or see Definition \ref{a9}(1) called
$\bar\alpha_{\bar\eta}$ there) and choose the $a_{\bar\eta,n}$'s to ``defeat
$h_{\bar\eta}$", i.e. to ensure no $h \in \Hom(G_1,\bbZ)$ extends
$h_{\bar\eta}$.

Concerning the first task, we like to ensure $\bold x$ is
$\theta$-free meaning that for any $\Lambda \subseteq 
\Lambda_{\bold x}$ of cardinality $< \theta$ we can list its members
as $\langle \bar\eta_\alpha:\alpha < \alpha_*\rangle$ such that for
every $\alpha$ for some $m,i$ we have $j \ge i \Rightarrow
\bar\eta_\alpha \upharpoonleft (m,j) \notin \{\bar\eta_\beta
\upharpoonleft (m,j):\beta < \alpha\}$, see Definition \ref{a9}(3).

In the existence proofs the novel main point is getting enough
freeness relying on the pcf theory, i.e. in \S1 we prove the
existence of suitable $\cp \, \bold x$.
\end{discussion}

\begin{definition}
\label{a6}
1) We say $\bold x$ is a $\bar\partial$-c.p. 
(combinatorial $\bar \partial$-parameter) \when \, $\bold x 
= (\bold k,\bar\partial,\bar S,\Lambda,\bar J) = 
(\bold k_{\bold x},\bar\partial_{\bold x},\bar S_{\bold
   x},\Lambda_{\bold x},\bar J_{\bold x})$ and
   they satisfy: (in the standard case $J_m = \{w \subseteq
   \partial_\ell:w$ is bounded$\}$):
\mn
\begin{enumerate}
\item[$(a)$]  $\bar\partial = \langle \partial_m:m < \bold k\rangle$,
  a sequence of limit ordinals
\sn
\item[$(b)$]  $\bar J = \langle J_m:m < \bold k\rangle$
\sn
\item[$(c)$]  $J_m$ is an ideal on $\partial_m$ 
\sn
\item[$(d)$]  $\bar S = \langle S_m:m < \bold k\rangle,S_m$ a set of
  ordinals if not said otherwise
\sn
\item[$(e)$]  $\Lambda  \subseteq \bar S^{[\bar\partial]}$.
\end{enumerate}
\mn
2) We adopt the conventions and definitions in \ref{a5}(2)-(5).
\end{definition}

\begin{convention}
\label{a7}
1) If $\bold x$ is clear from the context we may write $\bold k$ for 
$\bold k(\bold x)$, $k$ for $k(\bold x)$ and $S,\Lambda,\bar J$ 
instead of $\bold k_{\bold x},k_{\bold x},
\bar S_{\bold x},\Lambda_{\bold x},\bar J_{\bold x}$ respectively.

\noindent
2) If not said otherwise $\bold x$ is weakly ordinary, see \ref{a5}(3).
\end{convention}

\begin{definition}
\label{a9}
Assume $\bold x$ is a $\bar\partial$-c.p.

\noindent
1)  We say $\bold x$ has $(\bar \chi,\bold k,1)$-Black Box or
$\bar\chi$-pre-black box \when \, some $\bar \alpha$ is a $(\bar\chi,\bold
k,1)$-black box for $\bold x$
or $(\bold x,\bar \chi)$-pre-black box, which means:
\mn
\begin{enumerate}
\item[$(a)$]   $\bar \chi = \langle \chi_m:m < \bold k_{\bold
    x}\rangle$ is a sequence of cardinals
\sn
\item[$(b)$]   $\bar \alpha = \langle \bar \alpha_{\bar \eta}:\bar
\eta \in \Lambda_{\bold x}\rangle$
\sn
\item[$(c)$]   $\bar \alpha_{\bar \eta} = \langle \alpha_{\bar \eta,m,i}:m
< \bold k_{\bold x},i < \partial_m \rangle$ and 
$\alpha_{\bar\eta,m,i} < \chi_m$
\sn
\item[$(d)$]   if $h_{m}:\Lambda_{\bold x,m} \rightarrow \chi_m$ for
$m < \bold k_{\bold x}$ recalling \ref{a3}(5) \then \, for 
some $\bar \eta \in \Lambda_{\bold x}$ we
have: $m < \bold k_{\bold x} \wedge i < \partial_m \Rightarrow h_m(\bar \eta
\upharpoonleft \langle m,i \rangle) = \alpha_{\bar \eta,m,i}$.
\end{enumerate}
\mn
2)  For $\Lambda \subseteq \Lambda_{\bold x}$ we define $\bold x
\rest \Lambda$ naturally as $(\bold k_{\bold x},\bar\partial_{\bold x},\bar
S_{\bold x},\Lambda,\bar J)$.

\noindent
3) We may write $\bar\alpha$ as $\bold b$, a function with domain
$\{(\bar\eta,m,i):\bar\eta \in \Lambda_{\bold x},m < \bold k,i <
\partial_m\}$ such that $\bold b_{\bar\eta}(m,i) = 
\bold b(\bar\eta,m,i) = \alpha_{\bar\eta,m,i}$.
We may replace $\bar \chi$ by $\chi$ if $\bar \chi =\langle
\chi:\ell < \bold k_{\bold x}\rangle$ or by $\bar C = \langle
C_\ell:\ell < \bold k\rangle$ when $|C_\ell| = \chi_\ell$ and we demand
$\Rang(h_\ell) \subseteq C_\ell$.  
We may replace $\bold x$ by $\Lambda = \Lambda_{\bold x}$ (so say 
$\bar \alpha$ is a $(\Lambda,\bar \chi)$-pre-black box).

\noindent
4) Omitting the ``pre" in part (1) means that there is a partition
$\bar\Lambda = \langle \Lambda_\alpha:\alpha < 
|\Lambda_{\bold x}|\rangle$ of $\Lambda_{\bold x}$
such that each $\bold x \rest \Lambda_\alpha$ has 
$\bar\chi$-pre-black box and some $\langle \bar\nu_\alpha:\alpha <
|\Lambda_{\bold x}|\rangle$ witnesses it which means that:
\mn
\begin{enumerate}
\item[(a)]  $\{\bar\nu_\alpha:\alpha < |\Lambda_{\bold x}|\} =
  \Lambda_{\bold x}$
\sn
\item[(b)]  letting $\mu$ be maximal such that $(\forall \ell < \bold
  k)2^{< \mu} \le \chi_\ell$ we have $\alpha < \beta < \alpha + \mu
  \Rightarrow \bar\nu_\alpha = \bar\nu_\beta$
\sn
\item[(c)]  if $\alpha \le \beta < |\Lambda_{\bold x}|$ and $\bar\eta
  \in \Lambda_\beta$ then $\nu_{\alpha,\bold k-1} < \eta_{\bold k-1}
  \mod J_{\bold x,\bold k-1}$.
\end{enumerate}
\mn
5) We may write BB instead of black box.

\noindent
6) We say $\bold x$ essentially has a $\bar\chi$-black box when some
$(\bar\Lambda,\bold n)$ witnessing it which means\footnote{See the
  proof of 2.10(2).}:
\mn
\begin{enumerate}
\item[(a)]  $\bar\Lambda = \langle \Lambda_\alpha:\alpha < 
|\Lambda_{\bold x}|\rangle$ is a sequence of pairwise disjoint subsets
of $\Lambda_{\bold x}$
\sn
\item[(b)]  $\bold x \rest \Lambda_\alpha$ has a
  $\bar\chi$-pre-black-box
\sn
\item[(c)]  $\bold n = \langle \bar\nu_\alpha:\alpha < 
|\Lambda_{\bold x}|\rangle$
\sn
\item[(d)]  if $\bar\nu \in \Lambda_{\bold x}$ then $\bar\nu \in
  \{\bar\nu_\alpha:\alpha < |\Lambda_{\bold x}|\}$
\sn
\item[(e)]  if $\mu = \sup\{\mu:2^\mu < \min\{|S_{\bold x,\ell}|:
\ell < \bold k_{\bold x}\}$ then $\alpha < \beta < \alpha + \mu
\Rightarrow \bar\nu_\alpha = \bar\nu_\beta$ and $\alpha \le \beta <
\lambda \wedge \bar\eta \in \Lambda_{\bold x_\alpha} \Rightarrow
\nu_{\alpha,\bold k-1} <_{J_{\bold x,\ell}} \eta_{\bold k-1}$ (we can
use variant of this) but this suffices presently.
\end{enumerate}
\end{definition}

\noindent
We shall use freely
\begin{observation}
\label{a9d}
If (A) then (B):
\mn
\begin{enumerate}
\item[(A)]  $\bold x$ is a $\bar\partial-\cp$ and $(\bar\Lambda,\bold
  n)$ witness $\bold x$ essentially have $\bar\chi$-black box
\sn
\item[(B)] there is $\bold y = \bold x \rest \Lambda$ for some
  $\Lambda \subseteq \Lambda_{\bold x}$ which has $\bar\chi$-black box
\end{enumerate}
\end{observation}

\begin{PROOF}{\ref{a9d}}
We choose $\Omega_n \subseteq \Lambda_{\bold x}$ by induction on $n$
by:
\mn
\begin{enumerate}
\item[$(*)$]
\begin{enumerate}
\item[(a)]  if $n=0$ then $\Omega_0 = \Lambda_0 \cup \{\bar\nu_0\}$
\sn
\item[(b)]  if $n=m+1$ then $\Omega_n = \cup\{\Lambda_\alpha:\alpha <
  \lambda = |\Lambda_{\bold x}|$ and $\bar\nu_\alpha \in \Omega_m\} \cup \Omega_m$.
\end{enumerate}
\end{enumerate}
\mn
Now $\bold x \rest \bigcup\limits_{n} \Omega_n$ is as required.
\end{PROOF}

\begin{observation}
\label{a10}
1) In Definition \ref{a9}(4) we may use $\Lambda_{\bold x}$ as the
index set of $\bar\Lambda$ instead of $|\Lambda_{\bold x}|$.

\noindent
2) If $\bold x$ is a $\bar\partial$-c.p., $\bar\chi = \langle
\chi_\ell:\ell < \bold k_{\bold x}\rangle$ and $|\Lambda_{\bold x}| =
\max\{\chi_\ell:\ell < \bold k_{\bold x}\}$ then $\bold x$ has a
$\bar\chi$-black box \Iff \, $\bold x$ has a $\bar\chi$-pre-black box.
\end{observation}

\begin{remark}
\label{a10}
Concerning the variants below our aim is to have ``$\bold x$ is
$(\theta)$-free", but to get it we use the other versions.
\end{remark}

\begin{definition}
\label{a12}
1) For $\Lambda_* \subseteq \bar S^{[\bar\partial]}$, we 
say ``$\bold x$ is $(\theta,u)$-free over $\Lambda_*$"
\when\footnote{so if $k_{\bold x}$ is 1 then ``$\bold x$ is
$(\theta,\{0\})$-free" has closer meaing to ``$\{\eta:\langle \eta\rangle \in
\Lambda_{\bold x}\}$ is $[\theta,J_{\bold x,0}]$-free" than to
$(\theta,J_{\bold x,0})$-free, see Definition \ref{z53}} \, $\bold x$ is 
weakly ordinary, $u \subseteq 
\{0,\dotsc,\bold k_{\bold x}-1\}$ and
for every $\Lambda \subseteq \Lambda_{\bold x} \backslash \Lambda_*$
of cardinality $< \theta$ there is a list $\langle \bar\eta_\alpha:\alpha <
\alpha_*\rangle$ of $\Lambda$ such that: for every $\alpha$ for some
$m \in u$ and $w \in J_{\bold x,m}$ we\footnote{If $\Lambda_{\bold x}$ 
is normal, we can restrict ourselves to $i=j$ and this is the usual case.}
 have $\bar\nu \in \{\bar\eta_\beta:\beta
< \alpha\} \cup \Lambda_* \wedge \bar\nu \upharpoonleft (m) = \bar\eta_\alpha
\upharpoonleft(m) \wedge j < \partial_{\bold x,m} \wedge 
i \in \partial_{\bold x,m} \backslash w 
\Rightarrow \nu_m(j) \ne \eta_{\alpha,m}(i)$.

\noindent
2) If $\theta > |\Lambda_{\bold x}|$ we may (in part (1)) write
$(\infty,u)$-free or $u$-free; we may omit ``over $\Lambda_*$" when 
$\Lambda_* =\emptyset$.

\noindent
3) If $u = \{0,\dotsc,\bold k -1\}$ we may omit it.

\noindent
4) Suppose we are given cardinals $\theta_1 \le \theta_2$, combinatorial
$\bar\partial$-parameter $\bold x$ and $\Lambda_*$ (usually $\subseteq
\Lambda_{\bold x}$) and $u \subseteq \{0,\dotsc,\bold k_{\bold x}-1\}$.

We say $\bold x$ is $(\theta_2,\theta_1,u,k)$-free over $\Lambda_*$
\when \,:
\mn
\begin{enumerate}
\item[$(a)$]   $\theta_2 \ge \theta_1 \ge 1$
\sn
\item[$(b)$]  $1 \le k \le \bold k_{\bold x}$, if $k=1$ we may omit it.
\sn
\item[$(c)$]  $u \subseteq \{0,\dotsc,\bold k_{\bold x}-1\}$ has $\ge
  k$ members
\sn
\item[$(d)$]  for every $\Lambda \subseteq \Lambda_{\bold x}
  \backslash \Lambda_*$ of cardinality $< \theta_2$ there is a witness
  $(\bar\Lambda,g,\bar h)$ which means
\sn
\begin{enumerate}
\item[$(\alpha)$]  $\bar\Lambda = \langle \Lambda_\gamma:\gamma <
  \gamma(*)\rangle$ is a partition of $\Lambda$ to sets each of
  cardinality $< \theta_1$, so $\gamma(*)$ is an
  ordinal $< \theta_2$
\sn
\item[$(\beta)$]  $g:\gamma(*) \rightarrow [u]^k$; when $k=1$ we usually use
$g':\gamma(*) \rightarrow u$ where $g(\gamma) = \{g'(\gamma)\}$ for
  $\gamma < \gamma(*)$ or even use $g'':\Lambda \rightarrow [u]^1$
  where $g''(\bar\eta) = g'(\gamma)$ when $\bar\eta \in
  \Lambda_\gamma$.   Occasionally (when the meaning of $\bar\eta_\beta$ is
clear we may write $g(\bar\eta_\beta)$ or
  $g'(\bar\eta_\beta)$ instead of $g(\beta)$ and $g'(\beta)$ (so we
  consider $\lambda$ as the domain of $g,g'$ instead of $\gamma(*)$)
\sn
\item[$(\gamma)$]  $\bar\eta,\bar\nu \in \Lambda_\gamma \wedge 
m \in (\bold k_{\bold x} \backslash g(\gamma)) 
\Rightarrow \eta_m = \nu_m$
\sn
\item[$(\delta)$]  $\bar h = \langle h_m:m \in u\rangle$
\sn
\item[$(\varepsilon)$]  $h_m:\Lambda \rightarrow J_m$, really just $h_m
  \rest \{\bar\eta \in \Lambda$: if $\gamma < \gamma(*)$ and $\bar\eta \in
  \Lambda_\gamma$ then $m \in g(\gamma)\}$ matters.  Here again, 
we may write $h_m(\beta)$ instead of $h_m(\bar\eta_\beta)$
\sn
\item[$(\zeta)$]  if $\bar\eta \in \Lambda_\beta$ and $m \in
  g(\beta)$ and $\bar\nu \in \cup\{\Lambda_\alpha:\alpha < \beta\}
  \cup \Lambda_*$ and $\bar\nu \upharpoonleft (m,=\emptyset) = \bar\eta 
\upharpoonleft (m,=\emptyset)$ then $i \in \partial_m \backslash 
h_m(\bar\eta) \Rightarrow \eta_m(i) \ne \nu_m(i)$.
\end{enumerate}
\end{enumerate}
\mn
5) In (4) if $\theta_2 > |\Lambda_{\bold x}|$ we may write
$(\infty,\theta_1,u,k)$-free; we may omit 
$\Lambda_*$ if $\Lambda_* = \emptyset$ and if $k=1$ we may omit $k$.

\noindent
6) We say $\bold x$ is $(\theta,u)$-free over $\Lambda_*$ respecting
$\bar\Lambda$ when (so we may write $\bold k$ instead of $u = \{\ell:\ell
< \bold k\}$ and $\theta$-free instead of $(\theta,\{\ell:\ell < \bold
k\})$: $\bar\Lambda = \langle \Lambda_{\bar\nu}:\bar\nu
\in \Lambda_{\bold x}\rangle,\Lambda_{\bar\nu} \subseteq
\Lambda_{\bold x}$, and for every $\Lambda \subseteq \Lambda_{\bold x}
\backslash \Lambda_*$ of cardinality $< \theta$ there is a list
$\langle \bar\eta_\alpha:\alpha < \alpha_*\rangle$ of $\Lambda$ such
that:
\mn
\begin{enumerate}
\item[$\bullet_1$]  if $\bar\eta_\alpha \in \Lambda_{\bar\nu}$ so $\bar\nu
\in \Lambda_{\bold x}$ then $\bar\nu \in \{\bar\eta_\beta:\beta <
  \alpha\} \cup \Lambda_*$
\sn
\item[$\bullet_2$]  for every $\alpha < \alpha_*$ for some $m \in u$
  and $w \in J_{\bold x,m}$ we have $\bar\nu \in \{\bar\eta_\beta:\beta <
  \alpha\} \cup \Lambda_* \cap \bar\nu \upharpoonleft (m) = 
\bar\eta_\alpha \upharpoonleft m \wedge j \in \partial_{\bold x,m}
  \backslash w \wedge i < \partial_{\bold x,m} \Rightarrow \nu_m(i)
  \ne \eta_m(j)$.
\end{enumerate}
\mn
7) For $\bold x,\theta_1,\theta_2,\Lambda_*,u$ as in Definition
\ref{a12}(4) and a sequence $\bar\Lambda^* = \langle
\Lambda^*_{\bar\rho}:\bar\rho \in \Lambda_{\bold x}\rangle$ of subsets
of $\Lambda_{\bold x}$ we say $\bold x$ is
$(\theta_2,\theta_1,u,k)$-free over $\Lambda_*$ respecting
$\bar\Lambda^*$, when clauses (a)-(d) of Definition \ref{a12}(4) hold
and we add to clause (d)
\mn
\begin{enumerate}
\item[$(\eta)$]  if $\bar\eta \in \Lambda_\alpha$ and $\bar\eta \in
  \Lambda^*_{\bar\rho}$ then $\bar\rho \in \cup\{\Lambda_\beta:\beta <
  \alpha\} \cup \Lambda_*$.
\end{enumerate}
\end{definition}

\begin{claim}
\label{a15}
Assume $\bold x$ is a $\bar\partial$-c.p. and $u \subseteq
\{0,\dotsc,\bold k_{\bold x}-1\}$ is not empty.

\noindent
1) $\bold x$ is $(\theta_2,2,u,1)$-free over $\Lambda_*$ \Iff \, $\bold x$ is
$(\theta_2,u)$-free over $\Lambda_*,\theta_2 \ge 2$.

\noindent
2) If $\partial > \max\{\partial_\ell:\ell < \bold k_{\bold x}\},\bold
   x$ is $(\theta,\partial,u)$-free over $\Lambda_*$ 
and for each $\ell \in u,\bold x$ is 
$(\partial,2,\{\ell\})$-free \then \, $\bold x$ is $(\theta,2,u)$-free
over $\Lambda_*$ (equivalently $(\theta,u)$ free over $\Lambda_*$).
\end{claim}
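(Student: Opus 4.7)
For Part (1), I would show that the two notions package identical data and yield an equivalent condition once the parameters $\theta_1 = 2$ and $k = 1$ are unpacked. A partition of $\Lambda$ into pieces of cardinality $< 2$ is just a partition into singletons, hence the same as an enumeration $\langle \bar\eta_\alpha : \alpha < \alpha_*\rangle$ of $\Lambda$; the function $g:\gamma(*) \to [u]^1$ is the same as the per-element choice $\alpha \mapsto m_\alpha \in u$ of Def \ref{a9}(3); and $h_{m_\alpha}(\bar\eta_\alpha) \in J_{m_\alpha}$ plays the role of $w_\alpha$. Clause $(\gamma)$ of Def \ref{a9}(4) is vacuous on singletons, so only clause $(\zeta)$ bites and it says exactly that $\eta_{\alpha,m_\alpha}(i) \ne \nu_{m_\alpha}(i)$ for every $\bar\nu$ that is a predecessor in the listing or lies in $\Lambda_*$, matches $\bar\eta_\alpha$ on coordinates other than $m_\alpha$, and every $i \notin w_\alpha$. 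This is exactly the (pointwise) clause of Def \ref{a9}(3), as per its footnote identifying the notion with $[\theta,J]$-freeness, so both implications follow by renaming.

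For Part (2), given $\Lambda \subseteq \Lambda_{\bold x} \setminus \Lambda_*$ with $|\Lambda| < \theta$, I would first invoke $(\theta,\partial,u)$-freeness to obtain an outer witness $(\bar\Lambda, g, \bar h^{\mathrm{out}})$ with $\bar\Lambda = \langle \Lambda_\gamma : \gamma < \gamma(*)\rangle$ partitioning $\Lambda$, $|\Lambda_\gamma| < \partial$, $\gamma(*) < \theta$, $g(\gamma) = \{\ell_\gamma\} \subseteq u$, and the outer clause $(\zeta)$ controlling the coordinate $\ell_\gamma$ for inter-block and $\Lambda_*$-comparisons. For each $\gamma$, since $|\Lambda_\gamma| < \partial < \partial^+$, I would then apply $(\partial^+, 2, \{\ell_\gamma\})$-freeness to $\Lambda_\gamma$ (over $\Lambda_*$) to obtain an inner listing $\langle \bar\eta_{\gamma,\beta} : \beta < \beta_\gamma\rangle$ of $\Lambda_\gamma$ and a witness $h^{\mathrm{in},\gamma}_{\ell_\gamma}$. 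Splicing these into a global listing of $\Lambda$ ordered lexicographically by $(\gamma, \beta)$, I set $g'(\gamma,\beta) = \{\ell_\gamma\}$ and
\[
h'_{\ell_\gamma}(\bar\eta_{\gamma,\beta}) \;=\; h^{\mathrm{out}}_{\ell_\gamma}(\bar\eta_{\gamma,\beta}) \cup h^{\mathrm{in},\gamma}_{\ell_\gamma}(\bar\eta_{\gamma,\beta}) \in J_{\ell_\gamma}.
\]

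To verify clause $(\zeta)$ of $(\theta,2,u,1)$-freeness for the global listing with $\bar\eta = \bar\eta_{\gamma,\beta}$, $m = \ell_\gamma$ and a predecessor $\bar\nu$ satisfying $\bar\nu \upharpoonleft (m) = \bar\eta \upharpoonleft (m)$, I split into cases: if $\bar\nu \in \Lambda_{\gamma'}$ with $\gamma' < \gamma$ or $\bar\nu \in \Lambda_*$, the outer $(\zeta)$ yields $\eta_m(i) \ne \nu_m(i)$ for $i \notin h^{\mathrm{out}}_{\ell_\gamma}(\bar\eta)$; if $\bar\nu = \bar\eta_{\gamma,\beta'}$ with $\beta' < \beta$ (same block) or $\bar\nu \in \Lambda_*$, the inner listing gives the same conclusion for $i \notin h^{\mathrm{in},\gamma}_{\ell_\gamma}(\bar\eta)$. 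In either case the combined $h'_{\ell_\gamma}(\bar\eta)$ absorbs the bad set. The main obstacle, though essentially bookkeeping, is to confirm that these two cases exhaustively cover all predecessors and that applying inner freeness over the original $\Lambda_*$ rather than the larger $\Lambda_* \cup \bigcup_{\gamma' < \gamma}\Lambda_{\gamma'}$ is sufficient; this works precisely because the outer $(\zeta)$ handles the comparisons with earlier blocks. The hypothesis $\partial \ge \max_\ell \partial_{\bold x,\ell}$ keeps the inner listings indexed by ordinals small relative to $\theta$, so the global listing remains of length $< \theta$.
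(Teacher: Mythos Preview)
Your proposal is correct and matches the paper's intended argument; the paper's own proof is simply ``Straight.'', so your elaboration is exactly the kind of unpacking that justifies that one-word proof. One minor point: your closing remark about the role of the hypothesis $\partial \ge \max\{\partial_\ell:\ell < \bold k_{\bold x}\}$ is not quite right --- the global listing has length $|\Lambda| < \theta$ regardless, so this hypothesis is not what keeps the indexing small; it is rather a standing assumption natural for the applications (compare the hypothesis of Claim~\ref{a25}) and your splicing argument in fact goes through without invoking it.
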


\begin{PROOF}{\ref{a15}}
Should be clear but we elaborate.

\noindent
1) It is enough to deal with the case $|\Lambda_{\bold x} \backslash
\Lambda_*| < \theta_2$.  First, assume $\theta_2 \ge 2$ and $\bold x$
is $(\theta_2,u)$-free over $\Lambda_*$, let $\langle
\bar\eta_\alpha:\alpha < \alpha_*\rangle$ listing $\Lambda_{\bold x}
\backslash \Lambda_*$ be as in Definition \ref{a12}(1).  Let
$\Lambda_\alpha = \{\bar\eta_\alpha\}$ for $\alpha < \alpha_*$ and
define $g':\alpha_* \rightarrow u$ by $g'(\alpha) =$ the minimal $m
\in u$ such that for some $w \in J_m$ the condition in Definition
\ref{a12}(1) holds.  By the assumption that $\bold x$ is
$(\theta_2,u)$-free over $\Lambda_*,g'$ is well defined.  Let 
$g:\alpha_* \rightarrow
[u]^1$ be $g(\alpha) = \{g'(\alpha)\}$.  Also we define $h_m:\alpha_*
\rightarrow J_m$ for $m \in u$ such that: if $\alpha < \alpha_*$ and
$m = g'(\alpha)$ \then \, $h_m(\alpha)$ is any $w \in J_m$ such that
the condition in Definition \ref{a12}(1) holds.   Now clearly in
Definition \ref{a12}(4), clause (a) holds (letting $\theta_1=2$ as
$\theta_2 \ge 2 = \theta_1$), clause (b) holds as $k=1 \in 
[1,\bold k_{\bold x}]$ and clause (c) is obvious.  
We shall check clauses $(d)(\alpha)-(\zeta)$
hence finishing proving the ``if" implication.

Let $\gamma(*) = \alpha_*$ and $\bar\Lambda = \langle \Lambda_\alpha:\alpha <
\alpha_* \rangle$.  This definition takes care of $(d)(\alpha)$ and
the above definition of $g,g'$ ensures $(d)(\beta)$.  Clause
$(d)(\gamma)$ is immediate since each $\Lambda_\alpha$ is a
singleton.  Clauses $(d)(\delta),(d)(\varp)$ follow from the
definition of the $h_m$'s.  Finally, clause $(d)(\zeta)$ follows from
Definition \ref{a12}(1).

Second, assume $\bold x$ is $(\theta_2,2,u,1)$-free and let
$(\bar\Lambda,g,\bar h)$ witness this so $\theta_1=2$; 
note that $\theta_2 \ge 2$, since $\theta_1=2$ and $\theta_2 \ge
\theta_1$ by Definition \ref{a12}(4)(a).
So $\bar\Lambda =
\langle \Lambda_\alpha:\alpha < \alpha_*\rangle$ and $\bar h = \langle
h_m:m \in u\rangle$ and $g:\alpha_* \rightarrow [u]^1$, so for some
function $g':\alpha_* \rightarrow u$ we have $\alpha < \alpha_*
\Rightarrow g(\alpha) = \{g'(\alpha)\}$.  As $|\Lambda_\alpha| <
\theta_2 =2$ we have $|\Lambda_\alpha| \le 1$; \wilog \,
$\bigwedge\limits_{\alpha} \Lambda_\alpha \ne \emptyset$ hence there
is a unique $\bar\eta_\alpha \in \Lambda_{\bold x} \backslash \Lambda_*$
such that $\Lambda_\alpha = \{\bar\eta_\alpha\}$.  So $\langle
\bar\eta_\alpha:\alpha < \alpha_*\rangle$ lists $\Lambda_{\bold x}
\backslash \Lambda_*$, and it suffices to check that for every $\alpha
< \alpha_*$ the condition in Definition \ref{a12}(1) holds.  We choose
$m=g'(\alpha)$ so $m \in u$ and we choose $w = h_m(\alpha)$ so $w \in
J_m$ indeed, and the condition there holds for $m,w$ by clause
$(d)(\zeta)$ of Definition \ref{a12}(4) as $\Lambda_\alpha = 
\{\bar\eta_\alpha\},\beta < \alpha \Rightarrow
\Lambda_\beta = \{\bar\eta_\beta\}$.

\noindent
2) As $\bold x$ is $(\theta,\partial,u)$-free over $\Lambda_*$ there
is a triple $(\bar\Lambda^*,g^*,\bar h^*)$ witnessing it, as in
Definition \ref{a10}(4) and let
$\bar\Lambda^* = \langle \Lambda^*_\alpha:\alpha < \alpha_*\rangle$
and $\bar h^* = \langle h^*_m:m \in u\rangle$.  For each $\ell \in u$ and
$\alpha < \alpha_*$ we know that $\bold x$ is
$(\partial,2,\{\ell\})$-free and $\Lambda^*_\alpha$ is a subset of
$\Lambda_{\bold x} \backslash \Lambda_*$ of cardinality $< \partial$
hence there is a triple $(\bar\Lambda_\alpha,g_\alpha,\bar h_\alpha)$
witnessing it, let $\bar\Lambda_\alpha = \langle
\Lambda_{\alpha,\beta}:\beta < \beta_\alpha\rangle$ and so
$|\Lambda_{\alpha,\beta}| < 2$ and \wilog \, $\Lambda_{\alpha,\beta}
\ne \emptyset$, so let $\Lambda_{\alpha,\beta} =
\{\bar\eta_{\alpha,\beta}\}$ and (as $k=1$, see end of \ref{a12}(5))
$g_\alpha(\beta) = \{g'_\alpha(\beta)\}$, where
$g'_\alpha:\beta_\alpha \rightarrow u$ and let $\bar h_\alpha =
\langle h_{\alpha,m}:m \in u\rangle$.

Let $\gamma_\alpha = \Sigma\{\beta_{\alpha_1}:\alpha_1 < \alpha\}$ for
$\alpha < \alpha_*$, so clearly
$\langle \gamma_\alpha:\alpha \le \alpha_*\rangle$ is increasing
continuous and $\gamma_0 = 0$ and let $\gamma_* = \gamma_{\alpha_*}$;
we define $\bar\eta_\gamma$ for $\gamma < \gamma_*$ by: if $\gamma =
\gamma_\alpha + \beta,\beta < \beta_\alpha$ then we let
$\bar\eta_\gamma = \bar\eta_{\alpha,\beta}$.  Also let $g':\gamma_*
\rightarrow [u]^1$ be defined by $g' \rest [\gamma_\alpha,\gamma_{\alpha
  +1})$ is constantly $\{g^*(\alpha)\}$, let $\bar\Lambda = \langle
\Lambda_\gamma:\gamma < \gamma_*\rangle$ where $\Lambda_\gamma =
\{\bar\eta_\gamma\}$ and let $\bar h = \langle h_m:m \in
u\rangle,h_m:\gamma_* \rightarrow J_m$ be $h_m(\gamma_\alpha + \beta)
= h_{\alpha,m}(\beta)$ if $\alpha < \alpha_*,\beta < \beta_\alpha$.
So it is enough to check that $(\bar\Lambda,g',\bar h)$ witnesses
$\Lambda_{\bold x}$ is $(\theta,2,u)$-free over $\Lambda_*$.  E.g. why
clause $(\zeta)$ of Definition \ref{a12}(d) holds.

Let $\bar\eta \in \Lambda_\gamma,m \in g'(\gamma)$ and $\bar\nu \in
\cup\{\Lambda_\alpha:\alpha < \gamma\} \cup \Lambda_*$.  So
$\bar\eta = \bar\eta_\gamma$ and one of the following cases occur,
letting $\gamma = \gamma_\alpha + \beta,\beta < \beta_\alpha$.
\medskip

\noindent
\underline{Case 1}:  $\bar\nu \in \cup\{\Lambda^*_{\alpha'}:\alpha' <
\alpha\} \cup \Lambda_*$

Use ``$(\bar\Lambda^*,g^*,\bar h^*)$ witness $\Lambda_{\bold x}$ is
$(\theta,\partial,u)$-free over $\Lambda_*$".
\medskip

\noindent
\underline{Case 2}:  $\bar\nu \in \Lambda^*_\alpha$

Use ``$(\bar\Lambda_\alpha,g_\alpha,\bar h_\alpha)$ witness 
$\Lambda_\alpha$ is $(\partial,2,\{\ell\})$-free" for $\ell = g^*(\alpha)$.
\end{PROOF}

\begin{definition}  
\label{a18}
We say $(\bold x,\bar\Lambda)$ witness $\BB^3_{\bold
  k}(\lambda,\Theta,\bar\chi,\bar\partial)$ \when \,:
\mn
\begin{enumerate}
\item[$(a)$]  $\bold x$ is a $\bar\partial$-c.p. 
with $|\Lambda_{\bold x}| = \lambda$ and $\bold k = \bold k_{\bold
  x}$, i.e. $= \ell g(\bar\partial)$
\sn
\item[$(b)$]  $\bar\Lambda = \langle \Lambda_{\bar\nu}:\bar\nu \in
  \Lambda_{\bold x}\rangle$ is a sequence\footnote{In \cite{Sh:883} we
  use $\Lambda_{\bold x,<k}$ as index set which if $k=1$ may have
  smaller cardinality; so far not a significant difference.} of pairwise
  disjoint subsets of $\Lambda_{\bold x}$
\sn
\item[$(c)$]  $\bold x \rest \Lambda_{\bar\nu}$ has
  $\bar\chi$-pre-black box for every $\bar\nu \in \Lambda_{\bold x}$
\sn
\sn
\item[$(d)$]  $\Theta$ is a collection of cardinals and pairs of
  cardinals
\sn
\item[$(e)$]  if $\theta \in \Theta$ \then \, $\bold x$ is 
  $(\theta,\bold k)$-free respecting $\bar\Lambda$, see \ref{a12}(6) 
which means that in the list
$\langle \bar\eta_\alpha:\alpha < \alpha_*\rangle$ in Definition
  \ref{a12}(1), we have 
$\bar\eta_\alpha \in \Lambda_{\bar\nu} \Rightarrow \bar\nu
 \in \{\bar\eta_\beta:\beta < \alpha\}$
\sn
\item[$(f)$]  if $(\theta_2,\theta_1) \in \Theta$ then $\bold x$ is
  $(\theta_2,\theta_1,\bold k,1)$-free 
respecting $\bar\Lambda$, see \ref{a12}(7).
\end{enumerate}
\end{definition}

\begin{remark}
\label{a19}
Note that in Definition \ref{a18} necessarily we have 
$\Sigma\{\chi_\ell:\ell < \bold k\} \le |\Lambda_{\bold x}|$.
\end{remark}

\noindent
Clearly
\begin{claim}
\label{a20}
Assume $\mu$ is strong limit $> \cf(\mu) = \partial,\cF \subseteq
{}^\partial \mu$ has cardinality $\lambda = 2^\mu$ and $\cF$ is
$\theta$-free (i.e. $(\theta,J^{\bd}_\partial)$-free)), moreover,
$[\theta,J^{\bd}_\partial]$-free and weakly ordinary, see \ref{y37}(1),(2),(6).

\Then\, there is a $\langle \partial \rangle$-c.p. $\bold x$ with
$\Lambda_{\bold x} = \cF$ which is $\theta$-free and has the
$\lambda$-BB (i.e. $(\langle \lambda \rangle,1,1)$-BB).
\end{claim}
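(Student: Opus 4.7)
The plan is to take the obvious $\langle\partial\rangle$-c.p.\ $\bold x$ with $\bold k_{\bold x}=1$: set $\bar\partial_{\bold x}=\langle\partial\rangle$, $S_{\bold x,0}=\mu$, $J_{\bold x,0}=J^{\bd}_\partial$, and identify $\Lambda_{\bold x}$ with $\cF$ via $\eta\leftrightarrow\langle\eta\rangle$. Since $\bold k_{\bold x}=1$, the $\theta$-freeness clause of Definition~\ref{a9}(3) is, by the footnote there, exactly $[\theta,J^{\bd}_\partial]$-freeness of $\cF$, which is part of the hypothesis. So the only real task is producing the $(\langle\lambda\rangle,1,1)$-black box.

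Every element of $\Lambda_{\bold x,0}$ is of the form $\langle\eta\restriction\{i\}\rangle$ and hence encodes a pair $(i,\eta(i))\in\partial\times\mu$, so $|\Lambda_{\bold x,0}|\le\mu$. Since $\mu$ is strong limit with $\cf(\mu)=\partial$, the classical computation yields $\mu^\partial=2^\mu=\lambda$; consequently the set $\mathcal H$ of all colorings $h:\Lambda_{\bold x,0}\to\lambda$ has cardinality at most $\lambda^\mu=(2^\mu)^\mu=2^\mu=\lambda$.

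Partition $\cF=\bigcup_{\zeta<\lambda}\cF_\zeta$ into $\lambda$ pairwise disjoint subfamilies, each of cardinality $\lambda$ (possible since $|\cF|=\lambda$ and $\lambda\cdot\lambda=\lambda$). Fix an enumeration $\mathcal H=\{h_\xi:\xi<\lambda\}$ (with repetitions if needed) and, for each $\zeta<\lambda$, an enumeration $\cF_\zeta=\{\eta^\zeta_\xi:\xi<\lambda\}$, and define the guessing function by
\[
\alpha_{\langle\eta^\zeta_\xi\rangle,0,i}\;:=\;h_\xi\bigl(\langle\eta^\zeta_\xi\rangle\upharpoonleft(0,i)\bigr)\qquad(\zeta,\xi<\lambda,\ i<\partial).
\]
Given any coloring $h:\Lambda_{\bold x\restriction\cF_\zeta,0}\to\lambda$, extend it arbitrarily to some $h'\in\mathcal H$ on $\Lambda_{\bold x,0}$ and pick $\xi<\lambda$ with $h'=h_\xi$; then $\eta^\zeta_\xi\in\cF_\zeta$ witnesses the pre-BB property for $\bold x\restriction\cF_\zeta$. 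Thus $\langle\cF_\zeta:\zeta<\lambda\rangle$ is the partition required by Definition~\ref{a9}(2A), and $\bold x$ has the desired $\lambda$-BB. The only ingredient beyond bookkeeping is the cardinal-arithmetic identity $\mu^\partial=2^\mu$, which holds for any strong limit $\mu$ with $\cf(\mu)<\mu$, so no genuine obstacle arises.
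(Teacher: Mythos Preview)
Your proof is correct and follows essentially the same approach as the paper's. The paper's own proof is a one-line hint (``the set of functions from ${}^{\partial>}\mu$ to $\lambda$ has cardinality $\lambda=|\cF|$'') and you have supplied the routine details: the key point in both is the cardinal arithmetic $\lambda^{\mu}=(2^{\mu})^{\mu}=2^{\mu}=\lambda$, which lets you enumerate all colorings and diagonalize. The only cosmetic difference is that the paper phrases the counting over ${}^{\partial>}\mu$ while you (correctly, given this paper's convention that $\bar\eta\upharpoonleft(m,i)$ restricts to the single coordinate $\{i\}$) work directly with $\Lambda_{\bold x,0}$, which injects into $\partial\times\mu$; both index sets have size $\le\mu$, so the argument is identical.
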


\begin{PROOF}{\ref{a20}}
The point is that the set of functions from ${}^{\partial >}\mu$ to
$\lambda$ has cardinality $\lambda = |\cF|$, see more in 
\cite[2.2=Ld.6]{Sh:898}.
\end{PROOF}

\begin{claim}
\label{a25}
1) Assume $\bold x$ is a $ \bold k$-c.p., 
$\theta_2 \ge \theta_1 = \cf(\theta_1) > 
\max\{\partial_{\bold x,\ell}:\ell < \bold k_{\bold x}\}$ and $u \subseteq
 \{0,\dotsc,\bold k_{\bold x}-1\},|u| = k \ge 1$.

The following conditions (A),(B),(C) on $\bold
x,\theta_2,\theta_1,u,k$ are equivalent:
\mn
\begin{enumerate}
\item[$(A)$]  $\bold x$ is $(\theta_2,\theta_1,u,k)$-free over
  $\Lambda_*$
\sn
\item[$(B)$]  as in Definition \ref{a12}(4) omitting clause
  $(d)(\gamma)$, in this case we call 
$(\bar\Lambda,g,\bar h)$ an almost witness
\sn
\item[$(C)$]  for every $\Lambda \subseteq \Lambda_{\bold x}
  \backslash \Lambda_*$ of cardinality $< \theta_2$ there is a weak
witness $(g,\bar h)$ which means: clauses $(\delta),(\varepsilon)$ 
of \ref{a12}(4)(d) and
\sn
\begin{enumerate}
\item[$(\beta)'$]  $g:\Lambda \rightarrow [u]^k$
\sn
\item[$(\zeta)'$]  if $\bar\eta_1 \in \Lambda$ and $m \in u$ then
  for all but $< \theta_1$ of the sequences $\bar\eta_2 \in \Lambda$ we
  have
\begin{itemize}
\item   if $\bar\eta_1 \ne \bar\eta_2,
\bar\eta_1 \upharpoonleft (m,= \emptyset) = \bar\eta_2 \upharpoonleft
(m,= \emptyset)$ and $m \in g(\bar\eta_1) \cap g(\bar\eta_2)$ 
and $i \in \partial_m \backslash (h_m(\bar\eta_1) \cup
h_m(\bar\eta_2))$ \then \, $\eta_{1,m}(i) \ne \eta_{2,m}(i)$
\end{itemize}
\sn
\item[$(\eta)'$]  if $\bar\eta_1 \in \Lambda$ and $\bar\eta_2 \in
  \Lambda_*$ \then \, $\bullet$ of $(\zeta)'$ holds demanding only $m
  \in g(\bar\eta_1)$.
\end{enumerate}
\end{enumerate}
\mn
2) If in addition $\bold x$ is normal (see \ref{a3}(7)) we can add:
\mn
\begin{enumerate}
\item[$(D)$]  like (C) but we replace $\bullet$ inside $(\zeta)'$ (and
  similarly in $(\eta)'$) by
\sn
\begin{itemize}
\item   if $\bar\eta_1 \ne \bar\eta_2 \in \Lambda,\bar\eta_1
  \upharpoonleft (m,=\emptyset) = \bar\eta_2 \rest (m,\emptyset)$ and
 $m \in g(\bar\eta_1) \cap g(\bar\eta_2)$ and $i,j \in \partial_m
  \backslash (h_m(\bar\eta_1) \cup h_m(\bar\eta_2))$ \then \,
$\eta_{1,m}(i) \ne \eta_{2,m}(j)$.
\end{itemize}
\end{enumerate}
\mn
3) If in addition $\Lambda_* \subseteq \Lambda_{\bold x}$ and
each $J_{\bold x,\ell}$ is $\sigma$-complete, \then \,
$\{\Lambda:\Lambda \subseteq \Lambda_{\bold x} \backslash \Lambda_*$
is $(\theta_2,\theta_1,u,k)$-free over $\Lambda_*\}$
is a $\sigma$-complete ideal on $\Lambda_{\bold x} \backslash
\Lambda_*$.
\end{claim}

\begin{PROOF}{\ref{a25}}
1) \underline{$(A) \Rightarrow (B)$}:

Obvious by the formulation of (B).
\medskip

\noindent
\underline{$(B) \Rightarrow (C)$}:

Let $\Lambda \subseteq \Lambda_{\bold x} \backslash \Lambda_*$
have cardinality $< \theta_2$; by clause (B) we can choose
$(\bar\Lambda,g,\bar h)$, an almost witness (for $\Lambda$).  As $|u|=k$,
necessarily $g$ is constantly $u$, so let $g':\gamma(*) \rightarrow
[u]^k$ be constantly $u$, hence it is enough to prove that $(g',\bar
h)$ is a weak witness; clearly clause $(\beta)'$ of (C) holds.  
So by the phrasing of (B) and (C) it
is enough to prove clauses $(\zeta)',(\eta)'$ of (C).  But clause
$(\eta)'$ follows from clause $(\zeta)$ of (B), i.e. $(d)(\zeta)$ of
Definition \ref{a12}(4).  Now for clause $(\zeta)'$, let $\bar\Lambda =
\langle \Lambda_\gamma:\gamma < \gamma(*)\rangle$ and assume
$\bar\eta_\iota \in \Lambda_{\beta_\iota}$ for $\iota = 1,2$ and
$\beta_1 \ne \beta_2 < \gamma(*)$ and $m \in u$ and it suffices to
prove $\bullet$ of $(\zeta)'$.  Clearly $m \in g'(\beta_1) 
\cap g'(\beta_2)$.  So assuming $\bar\eta_1 \upharpoonleft (m, = \emptyset)
= \bar\eta_2 \upharpoonleft (m,=\emptyset)$ and $i \in \partial_m \backslash
(h_m(\bar\eta_1) \cup h_m(\bar\eta_2))$ we should prove that
$\eta_{1,m}(i) \ne \eta_{2,n}(i)$.  By the symmetry \wilog \, $\beta_1
< \beta_2$ and we apply clause $(\zeta)$ of (B) with
$\bar\eta_1,\bar\eta_2,\beta_1,\beta_2,m$ here standing for
$\bar\nu,\bar\eta,\beta,m$ there and get $\eta_{1,m}(i) \ne
\eta_{2,m}(i)$ as promised.
\medskip

\noindent
\underline{$(C) \Rightarrow (A)$}:

So assume that $\Lambda \subseteq \Lambda_{\bold x} \backslash
\Lambda_*$ has cardinality $< \theta_2$ and let
$(g,\bar h)$ be a weak witness for it; (actually we have
no further use of $|\Lambda| < \theta_2$), again necessarily 
$g$ is constantly $u$.  
So for $m \in u,i < \partial_m$ and
every $\bar\eta \in \Lambda$ let $\Omega^1_{i,m,\bar\eta} = \{\bar\nu
\in \Lambda:\bar\nu \upharpoonleft (m,=\emptyset) = \bar\eta
\upharpoonleft (m,= \emptyset)$ and $i \in \partial_m \backslash 
h_m(\bar\nu)$ and $\bar\nu_m(i) = \bar\eta_m(i)\}$.

By the choice of $(g,\bar h)$ and the definition of
$\Omega^1_{i,m,\bar\eta}$ we have:
\mn
\begin{enumerate}
\item[$\bullet_1$]   if $\bar\nu,\bar\rho \in \Omega^1_{i,m,\bar\eta}$
  \then \, $\bar\nu \rest (m,=\emptyset) = \bar\rho \upharpoonleft 
(m,= \emptyset)$ and $\bar\eta_m(i) = \bar\nu_m(i)$ and 
$i \in \partial_m \backslash (h_m(\bar\nu) \cup h_m(\bar\rho))$
\end{enumerate}
\mn
hence applying clause $(\zeta)'$ of (C) to any $\bar\eta_1 \in
\Omega_{i,m,\bar\eta}$ we have
\mn
\begin{enumerate}
\item[$\bullet_2$]   $\Omega^1_{i,m,\bar\eta}$ has $< \theta_1$ members.
\end{enumerate}
\mn
Let $\Omega^1_{\bar\eta} = \cup\{\Omega^1_{i,m,\bar\eta}:m \in u,i <
\partial_m\}$, so recalling the claim assumption $\theta_1 = 
\cf(\theta_1) > \sum\limits_{m} \partial_m$ clearly
\mn
\begin{enumerate}
\item[$\bullet_3$]   if $\bar\eta \in \Lambda$ then
  $\Omega^1_{\bar\eta}$ has cardinality $< \theta_1$.
\end{enumerate}
\mn
By transitivity of equality
\mn
\begin{enumerate}
\item[$\bullet_4$]  if $\bar\nu \in \Omega^1_{\bar\eta}$ then $m <
  \bold k \wedge m \notin u \Rightarrow \bar\nu_m = \bar\eta_m$.
\end{enumerate}
\mn
For $\bar\eta \in \Lambda$ let $\Omega^2_{\bar\eta}$ be the minimal
subset $\Omega$ of $\Lambda$ such that $\bar\eta \in \Omega$ and
$\bar\nu \in \Omega \Rightarrow \Omega^1_{\bar\nu} \subseteq
\Omega$, so recalling $\theta_1$ is regular 
necessarily $|\Omega^2_{\bar\eta}| < \theta_1$.

Let $\langle \bar\eta^*_\gamma:\gamma < \gamma(*)\rangle$ list
$\Lambda$.  We now choose $\Lambda^1_\gamma$ for $\gamma < \gamma(*)$ by 
$\Lambda^1_\gamma = \cup\{\Omega^2_{\bar\eta^*_\beta}:\beta \le \gamma\}$ 
so $\{\bar\eta^*_\gamma\} \subseteq \Lambda^1_\gamma
\subseteq \Lambda$ so clearly $\cup\{\Lambda^1_\gamma:\gamma <
\gamma(*)\} = \Lambda$.

Lastly, let $\Lambda^2_\gamma = \Lambda^1_\gamma \backslash
\cup\{\Lambda^1_\beta:\beta < \gamma\}$, so obviously $\bar\Lambda^2 =
\langle \Lambda^2_\gamma:\gamma < \gamma(*)\rangle$ is a partition of
$\Lambda$.  Let $g_*:\gamma(*) \rightarrow [u]^k$ be constantly $u$
and $\bar h = \langle h_m:m \in u\rangle$ and we shall show that
the triple $(\bar\Lambda^2,g_*,\bar h)$ is as required in \ref{a12}(4)(d).

Now clauses $(\alpha)-(\varepsilon)$ hold by our choices noting that
by $\bullet_4$ we have: if $\bar\eta,\bar\nu \in \Lambda^2_\gamma$ and $m
< \bold k,m \notin u$ then $\bar\eta_m = \bar\nu_m$.  As for
clause $(\zeta)$ let $\bar\eta \in \Lambda_\beta,m \in g(\beta),\alpha
< \beta$ and $\bar\nu 
\in \Lambda^2_\alpha,\bar\nu \upharpoonleft (m,=\emptyset) =
\bar\eta \upharpoonleft (m,=\emptyset)$ and $i \in \partial_m \backslash
h_m(\bar\eta)$ and we should prove that $\bar\nu_m(i) \ne \bar\eta_m(i)$.  But
otherwise $\bar\eta \in \Omega^1_{\bar\nu} \subseteq
\Omega^2_{\bar\eta^*_\alpha} \subseteq
\cup\{\Lambda^2_{\alpha_1}:\alpha_1 \le \alpha\}$, contradiction.

\noindent
2) Similarly.

\noindent
3) By part (1) if we can use as definition clause (C) of (1), so assume
$\Lambda = \bigcup\limits_{i < i(*)} \Lambda_i \subseteq
\Lambda_{\bold x} \backslash \Lambda_*$ and $i(*) < \sigma$ and
$h_{i,m}:\Lambda \rightarrow J_m$ and $(g_i,\bar h_i)$ weakly
witnesses $\Lambda_i$.  As $|u| = k$ necessarily $g_0 := \bigcup\limits_{i}
g_i$ is the constant function from $\Lambda$ into $\{u\}$ 
and let $h_m:\Lambda \rightarrow \cP(\partial_m)$ be
$h_m(\bar\eta) = \cup\{h_{i,m}(\bar\eta):i < i(*)$ and $\bar\eta \in 
\Lambda_i\}$.  Now $h_m$ is into $J_m$
as $J_m$ is a $\sigma$-complete ideal and $i(*) < \sigma$.  Lastly,
clearly $(g_0,\langle h_m:m \in u\rangle)$ is a weak witness for
$\Lambda$ so we are done.
\end{PROOF}

\begin{remark}
\label{a27h}
Why the demand $|u|=k$ in the claim?

Our problem is: in (A) we promise that the function $g$ gives (for a
fixed one $\gamma$) for all $\bar\eta \in \Lambda_\gamma$ the same $u$
whereas in clause (C) this is not the case, in fact, not well defined.  
It is natural then to
divide $\Lambda_\gamma$ to $\le 2^{\bold k}$ cases according to
the value of $g$, but then it is not clear that clause $(\zeta)$ of
(A) holds.  To avoid this we assume $|u|=k$.  Maybe \ref{a25}(3) helps
but this is not crucial.
\end{remark}

\begin{definition}  
\label{a27}
If $\ell g(\bar\partial_\iota) = \bold k_\iota$ and $\bold x_\iota$
is a combinatorial $\bar\partial_\iota$-parameter for
$\iota = 1,2,3$ then we say $\bold x_1 \times \bold x_2 = \bold x_3$ 
\when \,:
\mn
\begin{enumerate}
\item[$(a)$]  $\bar\partial_3 = \bar\partial_1 \char 94
  \bar\partial_2$ hence $\bold k_3 = \bold k_1 + \bold k_2$
\sn
\item[$(b)$]  $\bar J_{\bold x_3} = \bar J_{\bold x_1} 
\char 94 \bar J_{\bold x_2}$
\sn
\item[$(c)$]  $\bar S_{\bold x_3}$ is $\bar S_{\bold x_1} \char 94
  \bar S_{\bold x_2}$, that is
\sn
\begin{enumerate}
\item[$\bullet$]  $S_{\bold x_1,\ell}$ if $\ell < \bold k_1$
\sn
\item[$\bullet$]  $S_{\bold x_2,\ell - \bold k_1}$ if $\ell \ge \bold k_1$
\end{enumerate}
\sn
\item[$(d)$]  $\Lambda_{\bold x_3}$ is the set of $\bar\eta \in
  \prod\limits_{\ell < \bold k_3} {}^{\partial_3(\ell)}(S_{\bold
  x_3,\ell})$ such that for some $\bar\nu \in \Lambda_{\bold x_1}$ and
  $\bar\rho \in \Lambda_{\bold x_2}$ we have:
\sn
\begin{enumerate}
\item[$\bullet$]  if $\ell < \bold k_1$ \then \, $\eta_\ell = \nu_\ell$
\sn
\item[$\bullet$]  if $\ell \ge \bold k_1$ \then \, $\eta_\ell 
= \rho_{\ell - \bold k_1}$.
\end{enumerate}
\end{enumerate}
\end{definition}

\begin{explanation}
\label{a29}
What is the role of the next claim?  We shall prove 
for $(\partial,J) = (\aleph_0,J^{\bd}_\omega)$ and
$(\aleph_1,J^{\bd}_{\aleph_1} \times J^{\bd}_{\aleph_0})$, that for many
strong limit singular $\mu$, there is a $1-\cp \, \bold x$ such that 
$(\partial_{\bold x,0},J_{\bold x,0}) = (\partial,J)$ and 
$\bold x$ has $2^{\mu}$-BB and $\bold x$ is quite free.  But we do not know how
  to get one which is even just $\aleph_{\omega +1}$-free.  But such
  freeness is needed in \S2!  However, using long enough finite
  products we can get enough freeness.  More fully, first by
  \ref{a30}, the product gives combinational parameter of the expected
  length (the sum) and weak ordinariness, ordinariness and normality
  are preserved.

Second, by \ref{a34} the products have the appropriate (pre)-black-box
if each product has one.

Third, in \ref{a34} - \ref{a41} we get that for each $\bold x_\ell$
satisfies enough cases of $(\theta_2,\theta_1,u)$-freeness 
conditions then their product satisfies more.

Fourth, in Theorem \ref{a51} we prove the existence of $\bold x_\ell
\, (\ell < \bold k)$ as required relying on \cite{Sh:1008}.

Lastly, in Conclusion \ref{a52} we get the desired conclusion used in \S2.
\end{explanation}

\begin{claim}  
\label{a30}
1) If $\bold x_\iota$ is a combinatorial
   $\bar\partial_\iota$-parameter for $\iota=1,2$ \then \, there is
   one and only one combinatorial parameter $\bold x_3$ 
such that $\bold x_1 \times \bold x_2 = \bold x_3$.

\noindent
2) The product in Definition \ref{a27} is associative.

\noindent
3) If $\bold x_1 \times \bold x_2 = \bold x_3$ \then \, $\bold x_2 \times
   \bold x_1$ is a permutation of $\bold x_3$, see Definition
\ref{a5}(4).

\noindent
4) If in Definition \ref{a27}, $\bold x_1,\bold x_2$ are [weakly]
ordinary and/or normal, see \ref{a5}(3), \ref{a3}(7), 
\then \, so is $\bold x_3$.
\end{claim}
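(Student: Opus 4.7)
The plan is to verify each of the four parts of Claim \ref{a30} directly from Definition \ref{a27}; essentially all of this is bookkeeping, but part (3) requires picking the right permutation and checking it witnesses Definition \ref{a5}(4).

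For part (1), Definition \ref{a27} specifies each datum of $\bold x_3$, namely $\bar\partial_{\bold x_3} = \bar\partial_1 \char 94 \bar\partial_2$, $\bar J_{\bold x_3} = \bar J_{\bold x_1} \char 94 \bar J_{\bold x_2}$, $\bar S_{\bold x_3}$ by splitting the index set at $\bold k_1$, and $\Lambda_{\bold x_3}$ as the set of concatenations $\bar\nu \char 94 \bar\rho$ with $\bar\nu \in \Lambda_{\bold x_1}, \bar\rho \in \Lambda_{\bold x_2}$. So uniqueness is immediate, and existence reduces to checking that the specified tuple satisfies clauses (a)--(e) of Definition \ref{a6}(1); each clause follows from the corresponding clause for $\bold x_1$ and $\bold x_2$ by inspection.

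For part (2), write out $(\bold x_1 \times \bold x_2) \times \bold x_3$ and $\bold x_1 \times (\bold x_2 \times \bold x_3)$ via Definition \ref{a27}. Associativity of sequence concatenation gives equality of $\bar\partial$, $\bar J$, and $\bar S$. For $\Lambda$, a sequence $\bar\eta$ lies in the first set iff it admits a block decomposition $\bar\nu_1 \char 94 \bar\nu_2 \char 94 \bar\nu_3$ with $\bar\nu_\iota \in \Lambda_{\bold x_\iota}$, and the same characterization holds for the second set; so the two $\Lambda$'s coincide.

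For part (3), I would exhibit the permutation explicitly: let $\pi_{\bold k_3}$ be the permutation of $\{0,\dotsc,\bold k_3 - 1\}$ that sends $m < \bold k_1$ to $m + \bold k_2$ and $m \ge \bold k_1$ to $m - \bold k_1$ (the cyclic block swap). Then $\partial_{\bold x_3,m} = \partial_{\bold x_2 \times \bold x_1, \pi_{\bold k_3}(m)}$ and $S_{\bold x_3,m} = S_{\bold x_2 \times \bold x_1, \pi_{\bold k_3}(m)}$ hold by case analysis on whether $m < \bold k_1$ or $m \ge \bold k_1$. The condition on $\Lambda$ in Definition \ref{a5}(4) amounts to: $\bar\eta = \bar\nu \char 94 \bar\rho \in \Lambda_{\bold x_3}$ iff $\bar\rho \char 94 \bar\nu \in \Lambda_{\bold x_2 \times \bold x_1}$, which is the defining clause (d) of Definition \ref{a27} read in the opposite order. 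Part (4) is immediate: if every component of every $\bar\nu \in \Lambda_{\bold x_1}$ and every $\bar\rho \in \Lambda_{\bold x_2}$ is increasing, then every component of $\bar\nu \char 94 \bar\rho \in \Lambda_{\bold x_3}$ is increasing, so $\bold x_3$ is ordinary. The only spot that might merit a second look is matching the precise index conventions of Definition \ref{a5}(4) in part (3), but this is purely clerical.
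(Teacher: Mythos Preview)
Your proposal is correct and matches the paper's approach: the paper's own proof is the single word ``Straightforward,'' and your elaboration unpacks exactly the bookkeeping that word encodes. One minor caution on part~(3): depending on how one reads the index conventions in Definition~\ref{a5}(4), the witnessing permutation may need to be your $\pi$ or its inverse (the $\bar\partial$/$\bar S$ clauses and the $\Lambda$ clause there use $\pi$ in opposite senses), but this is a clerical point you already flagged and does not affect the substance.
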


\begin{PROOF}{\ref{a30}}
Straightforward.
\end{PROOF}

\begin{claim}  
\label{a34}
1) $\bold x_3$ has $\bar\chi_3$-pre-black box \when \,: 
\mn
\begin{enumerate}
\item[$(a)$]  $\bold x_\iota$ is a combinatorial
$\bar\partial_\iota$-parameter for $\iota=1,2,3$
\sn
\item[$(b)$]  $\bold x_1 \times \bold x_2 = \bold x_3$
\sn
\item[$(c)$]  $\bold x_\iota$ has $\bar\chi_\iota$-pre-black box 
for $\iota= 1,2$
\sn
\item[$(d)$]  $\bar\chi_3 = \bar \chi_1 \char 94 \bar\chi_2$
\sn
\item[$(e)$]  if $\ell < \ell g(\bar\partial_2)$ then $\chi_{2,\ell} =
(\chi_{2,\ell})^{|\Lambda_{\bold x_1}|}$.
\end{enumerate}
\mn
2) Moreover, $\bold x_3$ has a $\bar \chi_3$-black box \when \, in
addition
\mn
\begin{enumerate}
\item[$(c)^+$]  $\bold x_2$ has a $\bar\chi_2$-black box and
  $\chi_{2,n} = (\chi_{2,n})^{|\Lambda_{\bold x_1}|}$. 
\end{enumerate}
\end{claim}

\begin{PROOF}{\ref{a34}}  
1) For each $m < \bold k_{\bold x_2}$ let 
$\bar F^m = \langle F^m_\alpha:\alpha <
\chi_{2,m}\rangle$ list $\{F:F$ a function from $\Lambda_{\bold x_1}$
into $\chi_{2,m}\}$.
By clause (e) of the assumption, such sequence exists.  
Let $\bar\alpha^1$ be a $\bar\chi_1$-pre-black box
for $\bold x_1$ and let $\bar\alpha^2$ be a 
$\bar\chi_2$-pre-black box for $\bold x_2$; they exist by clause (c)
of the assumption.

Lastly, we define $\bar\alpha = \langle \bar\alpha_{\bar\eta}:\bar\eta
\in \Lambda_{\bold x_3}\rangle$ where $\bar\alpha_{\bar\eta} = \langle
\alpha_{\bar\eta,m,i}:m < \bold k_{\bold x_3},i < \partial_m\rangle$
as follows: for $\bar\eta \in \Lambda_{\bold x_3},
m < \bold k_{\bold x_3}$ and $i < \partial_{\bold x_3,m}$ we let
\mn
\begin{enumerate}
\item[$\bullet$]  if $m < \bold k_{\bold x_1}$ then
  $\alpha_{\bar\eta,m,i} = \alpha^1_{\bar\eta \rest \bold k(\bold x_1),m,i}$
\sn
\item[$\bullet$]  if $m = \bold k_{\bold x_1} + \ell$ and 
$\ell < \bold k_{\bold x_2}$ then $\alpha_{\bar\eta,m,i} =
F^m_{\alpha^2_{\bar\nu,\ell,i}}(\bar\eta \rest \bold k_{\bold x_1})$
  where $\bar\nu = \bar\eta \rest [\bold k_{\bold x_1},\bold k_{\bold
    x_3})$, i.e. $\bar\nu = \langle \eta_{\bold k(\bold x_1)+ n}:n < \bold
  k_{\bold x_2}\rangle$.
\end{enumerate}
\mn
Clearly $\bar\alpha$ is of the right form, but is it really a
$\bar\chi_3$-pre-black box?  So assume $h_m:\Lambda_{\bold x_3,m}
\rightarrow \chi_{3,m}$ for $m < \bold k_{\bold x_3}$ and we should
find $\bar\eta \in \Lambda_{\bold x_3}$ as in Definition \ref{a9}(1).
Now first we define $h^2_m:\Lambda_{\bold x_2,m} \rightarrow
\chi_{2,m}$ for $m < \bold k_{\bold x_2}$ as follows:
$h^2_m(\bar\nu)$ is the unique $\alpha < \chi_{2,m}$ such that:
$\bar\rho \in \Lambda_{\bold x_1} \Rightarrow h_{\bold k(\bold
  x_1)+m}(\bar\rho \char 94 \bar\nu) = F^m_\alpha(\bar\rho)$, possible by
the choice of $\bar F^m$ above.  As
$\bar\alpha^2$ is a $\bar\chi_2$-pre-black box, clearly there is
$\bar\nu \in \Lambda_{\bold x_2}$ such that $m < \bold k_{\bold x_2}
\wedge i < \partial_{\bold x_2,m} \Rightarrow h^2_m(\bar\nu
\upharpoonleft (m,i)) = \alpha^2_{\bar\nu,m,i}$.

Fix a sequence $\bar\nu \in \Lambda_{\bold x_2}$ as in the former paragraph.
Now for $m < \bold k_{\bold x_1}$ we define $h^1_m:
\Lambda_{\bold x_1,m} \rightarrow \chi_{1,m}$ by $h^1_m(\bar\rho) =
h_m(\bar\rho \char 94 \bar\nu)$ for $\bar\rho \in \Lambda_{\bold
  x_1,m}$, it is well defined as by our assumptions on $h_m$, it has domain
$\Lambda_{\bold x_3,m}$ and as $\bar\nu \in \Lambda_{\bold x_2}$, clearly
$\bar\rho \char 94 \bar\nu \in \Lambda_{\bold x_3,m}$ by the
  definition of $\bold x_3$.  As $\bar\alpha^1$ is a
  $\bar\chi_1$-pre-black box for $\bold x_1$ there is $\bar\rho \in
  \Lambda_{\bold x_1}$ such that $m < \bold k_{\bold x_1} \wedge i <
  \partial_{\bold x_1,m} \Rightarrow h^1_m(\bar\rho) =
  \alpha^1_{\bar\rho,m,i}$.  We shall show that
  $\bar\eta := \bar\rho \char 94 \bar\nu$ is as required.

First, $\bar\eta \in \Lambda_{\bold x_3}$ because $\bold x_3 = \bold
x_1  \times \bold x_2,\bar\rho \in \Lambda_{\bold x_1}$ and $\bar\nu
\in \Lambda_{\bold x_2}$.

Second, if $m < \bold k_{\bold x_1} \wedge i < \partial_{\bold x_3,m}
= \partial_{\bold x_1,m}$ then
\mn
\begin{enumerate}
\item[$(*)$]  
\begin{enumerate}
\item[(a)]  $h_m(\bar\eta \upharpoonleft (m,i)) 
= h^1_m(\bar\rho \upharpoonleft (m,i))$ by
the choices of $\bar\eta$ and $h^1_m$ 
\sn
\item[(b)]  $h^1_m(\bar\rho \upharpoonleft
(m,i)) = \alpha^1_{\bar\rho,m,i}$ by the choice of $\bar\rho$ 
\sn
\item[(c)]  $\alpha^1_{\bar\rho,m,i} = 
\alpha_{\bar\eta,m,i}$ by the choice of $\alpha_{\bar\eta,m,i}$ 
\newline
\qquad so together
\sn
\item[(d)]  $h_m(\bar\eta \upharpoonleft (m,i)) = \alpha_{\bar\eta,m,i}$.
\end{enumerate}
\end{enumerate}
\mn
Third, if $m \in [\bold k_{\bold x_1},\bold k_{\bold x_3}) 
\wedge i < \partial_{\bold x_3,n}$
  then $m = \bold k_{\bold x_1} + \ell,\ell < \bold k_{\bold x_2}$ for
some $\ell$ and use the choices of $\alpha_{\bar\eta,m,i}$ and of $\bar\nu$. 

\noindent
2) We have to deal with the black box case.  So recalling
Definition \ref{a9}(4) we are assuming:
\mn
\begin{enumerate}
\item[$(*)$] 
\begin{enumerate}
\item[(a)]  $\bar\Lambda^2 = \langle
  \Lambda^2_\gamma:\gamma < |\Lambda_{\bold x_2}|\rangle$ is a partition
  of $\Lambda_{\bold x_2}$
\sn
\item[(b)]  if $\gamma < |\Lambda_{\bold x_2}|$, then
$\bold x_2 \rest \Lambda^2_\gamma$ has a $\bar\chi_2$-pre-black box.
\end{enumerate}
\end{enumerate}
\mn
Now repeating the proof above, note:
\mn
\begin{enumerate}
\item[(c)]  $\langle \bar\nu_\alpha:\alpha < |\Lambda_{\bold
    x_2}|\rangle$ list $\Lambda_{\bold x}$ as required in Definition
  \ref{a9}(4). 
\end{enumerate}
\mn
We can choose $\bar\alpha^2$ such
that not only it is a $\bar\chi_2$-pre-black box but also
$\bar\alpha^2 \rest \Lambda^2_\gamma = \langle
\bar\alpha^2_{\bar\nu}:\bar\nu \in \Lambda^2_\gamma\rangle$ is a
$\bar\chi_2$-pre-black box for each $\gamma < |\Lambda_{\bold x_2}|$.

Having defined $\bar\alpha = \langle \bar\alpha_{\bar\eta}:\bar\eta
\in \Lambda_{\bold x}\rangle$ note that:
\mn
\begin{enumerate}
\item[$(*)$]  $|\Lambda_{\bold x_1}| \le \chi_{\bold x_2,0}$ (by clause (e) of
  the claim) and $\chi_{\bold x_2,0} \le |\Lambda_{\bold x_2}|$ (by \ref{a19})
  and $|\Lambda_{\bold x_2}|$ is infinite (otherwise the
  $\bar\chi_2$-black box fails) hence $|\Lambda_{\bold x_3}| =
  |\Lambda_{\bold x_2}| \times |\Lambda_{\bold x_2}| = |\Lambda_{\bold
    x_2}|$
\sn
\item[$(*)$]  letting $\Lambda_\gamma = \Lambda_{\bold x_1} \times
  \Lambda^2_\gamma$ the sequence $\langle \Lambda_\gamma:\gamma <
  |\Lambda_{\bold x_3}|\rangle$ is a partition of $\Lambda_{\bold x_3}$.
\end{enumerate}
\mn
Mainly we need to prove that: if $\gamma < |\Lambda_{\bold x_3}|$ then
$\bar\alpha \rest \Lambda_\gamma$ is a $\bar\chi$-pre-black box.  
This proof is exactly as in the proof of the first part.

Lastly, we choose $\langle \bar\nu_\alpha:\alpha < |\Lambda_{\bold
  x_3}|\rangle$ as required.  Toward this let $\mu_\iota =
\max\{\mu:(\forall \ell < \bold k_\iota)(2^{< \mu} \le
\chi_{\iota,\ell})$ note that necessarily $|\Lambda_{\bold x_2}| =
|\Lambda_{\bold x_3}|$ and $\mu_1 \le |\Lambda_{\bold x_1}| < \mu_2$.
Now choose $\langle \bar\nu^1_\alpha:\alpha < |\Lambda_{\bold
  x_1}|\rangle$ be such that $\{\bar\nu^1_\alpha:\alpha <
|\Lambda_{\bold x_1}|\} = \Lambda_{\bold x_1}$ and $\alpha \le \beta <
\alpha + \mu_1 \Rightarrow \bar\nu^1_\alpha = \bar\nu^1_\beta$.

To finish, define $\langle \bar\nu_\alpha:\alpha < 
|\Lambda_{\bold x_3}|\rangle$ by:
\mn
\begin{itemize}
\item  if $\gamma = |\Lambda_{\bold x_1}| \cdot \alpha + \beta$
and $\beta < |\Lambda_{\bold x_1}|$, then \, 
 $\bar\nu_\gamma = \bar\nu^1_\beta \char 94 \bar\nu^2_\gamma$.
\end{itemize}
\mn
Recalling $\gamma_1 < \gamma_2 < \gamma_1 + \mu_2 \Rightarrow
\bar\nu^2_{\gamma_1} = \bar\nu^2_{\gamma_2}$ we are easily done.
\end{PROOF}

\noindent
The following Definition is somewhat similar to \cite{Sh:883}, 
different notation than earlier.
\begin{definition}  
\label{a36}
Let $\bold x = (\bold k,\bar\partial,\bar S,\Lambda,\bar J)$ be disjoint for
notational transparency (see \ref{a5}(3)).

\noindent
0) For $u \subseteq \{0,\dotsc,\bold k-1\}$ let 
$u^\perp = \{\ell < \bold k:\ell \notin u\}$.

\noindent
1) For $\cU \subseteq \bigcup\limits_{\ell < \bold k}{}^{\partial(\ell)}
(S_{\bold x,\ell})$ let $\Lambda_{\cU} = \Lambda_{\bold
   x,\cU} = \Lambda_{\bold x}(\cU) = \{\bar\eta \in \Lambda_{\bold
   x}:\eta_\ell \in \cU$ for every $\ell < \bold k\}$.

\noindent
2) For $\cU \subseteq \bigcup\limits_{\ell < \bold k} 
{}^{\partial(\ell)}(S_{\bold x,\ell})$ and $u 
\subseteq \{0,\dotsc,\bold k-1\}$ let:
\mn
\begin{enumerate}
\item[$(a)$]   $\add_{\bold x}(u) = \{\bold u:\bold u \subseteq
\bigcup\limits_{\ell \in u} {}^{\partial(\ell)}(S_{\bold x,\ell})$ satisfies
$|\bold u \cap {}^{\partial(\ell)}(S_{\bold x,\ell})|=1$ for $\ell \in
u\}$, note that $\bold u \in \add_{\bold x}(u) \Rightarrow |\bold u| = |u|$
\sn
\item[$(b)$]  for $\bold u \in \add_{\bold x}(u)$ let
$\Lambda_{\cU,\bold u} = \Lambda_{\bold x}(\cU,\bold u) := 
\{\bar\eta \in \Lambda_{\bold x}$: for some $m \in u$
   we have $\ell < \bold k \wedge \ell \ne m \Rightarrow \eta_\ell \in
   (\cU \cup \bold u) \cap {}^{\partial(\ell)}(S_{\bold x,\ell})$ and $\ell <
   \bold k \wedge \ell = m \Rightarrow \eta_\ell \in \cU \cap
   {}^{\partial(\ell)}(S_{\bold x,\ell})\}$
\sn
\item[$(c)$]  $\Lambda^*_{\bold x}(\cU,\bold u) := \Lambda_{\bold x}
(\cU \cup \bold u) \backslash \Lambda_{\bold x}(\cU,\bold u)$; this
set is interesting, i.e. non-empty only when $\cU \cap \bold u = \emptyset$
  and then it is equal to $\{\bar\eta \in \Lambda_{\bold x}$: if $\ell \in
  u$ then $\eta_\ell \in \bold u$ and if $\ell \in \bold k \backslash
  u$ then $\eta_\ell \in \cU\}$.
\end{enumerate}
\mn
3) For non-empty $u \subseteq \{0,\dotsc,\bold k-1\}$ we say $\bold x$
   is $\theta-(u,k)$-free \when \,: if $\cU \subseteq \bigcup\limits_{\ell
   < \bold k} {}^{\partial(\ell)}(S_{\bold x,\ell})$ has cardinality $< \theta$
   and $\bold u \in \add_{\bold x}(u^\perp)$ is disjoint to $\cU$ 
then $\Lambda_{\bold x}(\cU \cup \bold u)$ is 
$(\infty,2,u,k)$-free over $\Lambda_{\bold x}(\cU,\bold u)$ 
recalling \ref{a12}(4),(5).

\noindent
3A) If $\theta > |\Lambda_{\bold x}|$ we may write $\infty$ instead of 
$\theta$ in part (3).

\noindent
4) For non-empty $u \subseteq \{0,\dotsc,\bold k-1\}$ we say $\bold x$
is $(\theta_2,\theta_1)-(u,k)$-free \when \,: if $\cU \subseteq
\bigcup\limits_{\ell < \bold k} {}^{\partial(\ell)}(S_{\bold x,\ell})$ and
   $\bold u \in \add_{\bold x}(u^\perp)$ is disjoint to $\cU$ 
\then \, $\Lambda_{\bold x}(\cU \cup \bold u)$ is 
$(\theta_2,\theta_1,u,k)$-free over $\Lambda_{\bold x}(\cU,\bold u)$
recalling \ref{a12}(4).
\end{definition}

\begin{observation}
\label{a39}
1) If Definition \ref{a36}(3), the conclusion is equivalent to
``$\Lambda^*_{\bold x}(\cU,\bold u) = \Lambda_{\bold x}(\cU \cup
   \bold u) \backslash \Lambda_{\bold x}(\cU,\bold u)$ is 
$(\infty,2,u,k)$-free". 

\noindent
2) Similarly in \ref{a36}(4); that is, assume $u \subseteq \{0,\dotsc,\bold
   k-1\},\cU \subseteq \bigcup\limits_{\ell < \bold k}
   {}^{\partial(\ell)}(S_{\bold x,\ell})$ and $\bold u \in
   \add(u^\perp)$ is disjoint to $\cU$, \then \,: $\Lambda_{\bold x}(\cU
   \cup \bold u)$ is $(\theta_2,\theta_1,u,k)$-free over
   $\Lambda_{\bold x}(\cU < \bold u)$ \Iff \, $\Lambda^*_{\bold x}
(\cU,\bold u) = \Lambda_{\bold x}(\cU \cup \bold u) \backslash
   \Lambda_{\bold x}(\cU,\bold u)$ is $(\theta_2,\theta_1,u,k)$-free..

\noindent
3) If $\bold x$ is $\theta-(u,k)$-free \then \, $\bold x$ is
   $(\theta,u,k)$-free; see Definition \ref{a36}(3), \ref{a12}(4),(5)
   respectively.

\noindent
4) If $\bold x$ is $(\theta_2,\theta_1)-(u,k)$-free \then \, $\bold x$
   is $(\theta_2,\theta_1,u,k)$-free; see Definition \ref{a36}(4),
   \ref{a12}(4) respectively.
\end{observation}

\begin{PROOF}{\ref{a39}}
1) As if $\bar\eta \in \Lambda_{\bold x}(\cU \cup \bold u) \backslash
\Lambda_{\bold x}(\cU,\bold u)$ and $\bar\nu \in \Lambda_{\bold
  x}(\cU,\bold u)$ as $\bold u \in \add_{\bold x}(u^\perp)$ it follows
that $(\exists m \in u^\perp)[\eta_m \ne \nu_m]$.

\noindent
2) Similarly.

\noindent
3),4) Straightforward.
\end{PROOF}

\noindent
The gain in the following theorem is that taking products of
combinatorial parameters, we gain new cases of freeness.
\begin{tft}  
\label{a41}
If $\boxplus$ below holds, \then \, $\bold x$ is $(\theta_{\bold
  m},\theta^+_0)-(u,1)$-free.  If, in addition, every $\bold x_\ell$
is $\theta^+_0$-free, then $\bold x$ is $(\theta_{\bold m},u)$-free:
\mn
\begin{enumerate}
\item[$\boxplus$]
\begin{enumerate}
\item[(a)]  $\bold x_\ell$ is a combinatorial
  $\langle \partial_\ell \rangle$-parameter for $\ell < \bold k$
\sn
\item[(b)]  $\bold x = \bold x_0 \times \ldots \times \bold x_{\bold k -1}$
\sn
\item[(c)]  $u \subseteq \{0,\dotsc,\bold k-1\}$ and
  $\bold m = |u| > 0$, hence $\bold m \le \bold k$
\sn
\item[(d)]  $\theta_0 < \theta_1 < \ldots < \theta_{\bold m}$ are
  regular except possibly $\theta_0$
\sn
\item[(e)]  $\partial_{\bold x_\ell} \le \theta_0$ for $\ell < \bold k$
\sn
\item[(f)]  $\bold x_k$ is $(\theta_{m+1},
\theta^+_m)$-free when $k \in u \wedge m < \bold m$.
\end{enumerate}
\end{enumerate}
\end{tft}

\begin{PROOF}{\ref{a41}}  
\Wilog \, $\bold x$ is disjoint, i.e. the sets $S_\ell := 
S_{\bold x,\ell}$ are pairwise disjoint for $\ell < \bold k$.  
We prove the claim by induction on $\bold m$ (so fix $\bold k$ but we
  vary $u$ and the $\theta_m$'s).  So let
$\bold u \in \add_{\bold x}(u^\perp)$ and $\cU \subseteq
\bigcup\limits_{\ell < \bold k} {}^{\partial_\ell}(S_\ell)$ has cardinality
$< \theta_{\bold m}$ and we shall prove that 
$\Lambda^*_{\bold x}(\cU,\bold u)$ is $(\infty,\theta^+_0,u,1)$-free.
Clearly this suffices for the first phrase and the second follows recalling
  \ref{a15}(2), \ref{a39}(2).
\bigskip

\noindent
\underline{Case 1}:  $\bold m=1$

So $|u|=1$ and let $u = \{\ell\}$ hence $\bar\eta \mapsto \eta_\ell$
is a one-to-one function from $\Lambda^*_{\bold x}(\cU,\bold u)$ onto
$\cU_\ell : = \cU \cap \Lambda_{\bold x_\ell}$.  We know that $\bold x_\ell$ is
$(\theta_1,\theta^+_0)$-free and $|\cU_\ell| < \theta_1$ hence there
is a partition $\langle \cU_{\ell,\alpha}:\alpha < \alpha(*)\rangle$
of $\cU_\ell$ to sets each of cardinality $\le \theta_0,\alpha(*) \le
|\cU_\ell| < \theta_1$ and
$h_\ell:\cU_\ell \rightarrow J_{\bold x_\ell}$ such that $\alpha < \beta <
\alpha(*) \wedge \eta \in \cU_{\ell,\alpha} \wedge \nu \in
\cU_{\ell,\beta} \wedge \partial_\ell > i \notin h_\ell(\nu)
\Rightarrow \eta(i) \ne \nu(i)$.  For $\alpha < \alpha(*)$ 
let $\Lambda_\alpha = \{\bar\eta \in \Lambda^*_{\bold
  x}(\cU,\bold u):\eta_\ell \in \cU_{\ell,\alpha}\}$, clearly
$\langle \Lambda_\alpha:\alpha < \alpha(*)\rangle$ is a partition of
$\Lambda^*_{\bold x}(\cU,\bold u)$ to sets each of cardinality $\le
\theta_0$.  Let the function $g$ from $\alpha(*)$ to 
$[u]^1 = \{\{\ell\}\}$ be defined by $g(\alpha) = \{\ell\}$,
clearly the partition $\langle \Lambda_\alpha:\alpha <
\alpha(*)\rangle$ and the functions $g,h_\ell$ witness that $\Lambda^*_{\bold
  x}(\cU,\bold u)$ is $(\theta_{\bold m},\theta^+_0)$-free, as required.
\bigskip

\noindent
\underline{Case 2}:  $\bold m > 1$

Let $m = \bold m-1$, as $\bold m > 1$, clearly $m$ is $\ge 1$.  So
for $k \in u$ the c.p. $\bold x_k$ is
$(|\cU|^+,\theta^+_m)$-free and let $\cU_k = \cU \cap
{}^{\partial(k)}(S_k) \subseteq \Lambda_{\bold x_k}$ and by the
induction hypothesis, \wilog \, $|\cU| \ge \theta_m$.
Hence as in earlier cases
we can find a function $h^*_k:\cU_k \rightarrow 
J_{\bold x_k}$ such that in the directed
graph $(\cU_k,R_k)$ each node has out-degree $\le \theta_m$, that is,
$(\forall \eta \in \cU_k)(\exists^{\le \theta_m} \nu \in \cU_k)[\eta
  R_k \nu]$, where
\mn
\begin{enumerate}
\item[$(*)_1$]   $R_k = R_{k,h_k} = 
\{(\eta,\nu):\eta,\nu \in \cU_k$ and for some $i < \partial_k$ we have
$i \notin h^*_k(\nu),\eta(i)=\nu(i)\}$
\sn
\item[$(*)_2$]  let $\Lambda_*$ be $\Lambda^*_{\bold x}(\cU,\bold u) =
\Lambda_{\bold x}(\cU \cup \bold u) \backslash \Lambda_{\bold x}(\cU,\bold u)$
\end{enumerate}
\mn
and let
\mn
\begin{enumerate}
\item[$(*)_3$]  $R_* = \{(\bar\eta,\bar\nu):\bar\eta,\bar\nu \in
  \Lambda_*$ and for some $k \in u$ we have $\eta_k R_k \nu_k$ and
  $\ell < \bold k \wedge \ell \ne k \Rightarrow \eta_\ell =
  \nu_\ell\}$.
\end{enumerate}
\mn
Clearly
\mn
\begin{enumerate}
\item[$(*)_4$]  $(\Lambda_*,R_*)$ is a directed graph with each node having
out-degree $\le \theta_m$.
\end{enumerate}
\mn
Let $\bar\Lambda = \langle \Lambda_\gamma:\gamma < \gamma(*)\rangle$
be such that:
\mn
\begin{enumerate}
\item[$(*)_5$]  $(a) \quad \bar\Lambda$ is a partition of $\Lambda_*$
\sn
\item[${{}}$]  $(b) \quad \Lambda_\gamma$ has cardinality $\le
  \theta_m$
\sn
\item[${{}}$]  $(c) \quad$ if $\bar\eta \in \Lambda_\beta,\bar\nu \in
\Lambda_\gamma$ and $\beta < \gamma < \gamma(*)$ \then \, $\neg(\bar\eta R_*
\bar\nu)$; that is
\sn
\begin{enumerate}
\item[${{}}$]  $\bullet \quad$ if $\ell \in u$ and $\bar\eta
  \upharpoonleft (\ell,< 0) = \bar\nu \upharpoonleft (\ell,< 0)$ 
\then \, $\neg(\eta_\ell R_\ell \nu_\ell)$.
\end{enumerate}
\end{enumerate}
\mn
[Why?   Let $\langle \bar\eta_\alpha:\alpha <
|\Lambda_*|\rangle$ list $\Lambda_*$ with no repetition.  For $\alpha
< |\Lambda_*|$ we define $u_{\alpha,n} \in [|\Lambda_*|]^{\le
  \theta_m}$ by induction on $n$, increasing with $n$ by $u_{\alpha,0}
= \{\alpha\},u_{\alpha,n+1} = \{\beta$: for some $\gamma \in
u_{\alpha,n}$ we have $\bar\eta_\gamma R_* \bar\eta_\beta$ or $\beta =
\gamma\}$.

So $u_\alpha = \bigcup\limits_{n} u_{\alpha,n} \in [|\Lambda_*|]^{\le
  \theta_m},\alpha \in u_\alpha$ and $[\bar\eta_\beta R_*
\bar\eta_\gamma \wedge \beta \in u_\alpha \Rightarrow \gamma \in
u_\alpha]$.  Let $\Lambda_\alpha = \{\bar\eta_\gamma:\gamma \in u_\alpha$
but $(\forall \beta < \alpha)(\gamma \notin u_\beta)\}$, now check
that $\bar\Lambda = \langle \Lambda_\alpha:\alpha <
|\Lambda_*|\rangle$ is as required.]
\mn
\begin{enumerate}
\item[$(*)_6$]  it is enough to prove for each $\gamma < \gamma(*)$
  that $\Lambda_\gamma$ is $(\infty,\theta^+_0,u,1)$-free.
\end{enumerate}
\mn
[Why?  It is enough to prove $\Lambda_*$ is
$(\infty,\theta^+_0,u,1)$-free.

By the assumption of $(*)_6$ for 
each $\gamma < \gamma(*)$ let $\bar\Lambda_\gamma,g_\gamma,\bar
h_\gamma$ witness that $\Lambda_\gamma$ is $(\infty,\theta^+_0,u)$-free,
that is (recall Definition \ref{a12}(4); for $k=1$, see \ref{a12}(5)):
\mn
\begin{enumerate}
\item[$\bullet$]  $\bar\Lambda_\gamma = \langle
  \Lambda_{\gamma,\varepsilon}:\varepsilon <
  \varepsilon_\gamma\rangle$ is a partition of $\Lambda_\gamma$
\sn
\item[$\bullet$]  $\Lambda_{\gamma,\varepsilon}$ has cardinality $\le
  \theta_0$
\sn
\item[$\bullet$]  $g_\gamma:\varepsilon_\gamma \rightarrow u$
\sn
\item[$\bullet$]  if $\bar\eta,\bar\nu \in
  \Lambda_{\gamma,\varepsilon}$ and $k \in u \subseteq \bold k,k \ne
  g_\gamma(\varepsilon)$ then $\eta_k = \nu_k$
\sn
\item[$\bullet$]  $\bar h_\gamma = \langle h_{\gamma,k}:k \in u \rangle$
\sn
\item[$\bullet$]  $h_{\gamma,m}$ is a function from $\Lambda_\gamma$ into $J_m$
\sn
\item[$\bullet$]  if $\bar\eta \in \Lambda_{\gamma,\varepsilon}$ and
$\bar\nu \in \cup\{\Lambda_{\gamma,\xi}:\xi < \varepsilon\},m =
g_\gamma(\varepsilon),\bar\nu \upharpoonleft (m) = \bar\eta \upharpoonleft (m)$
and $i \in \partial_k \backslash h_{\gamma,k}(\bar\eta)$ 
then $\eta_k(i) \ne \nu_k(i)$.
\end{enumerate}
\mn
Let
\mn
\begin{enumerate}
\item[$\bullet$]  $\zeta_\gamma = \sum\limits_{\beta < \gamma}
  \varepsilon_\beta$ for $\gamma \le \gamma(*)$
\sn
\item[$\bullet$]  $\Lambda'_\varepsilon = \Lambda_{\gamma,\varepsilon -
  \zeta_\gamma}$ when $\varepsilon \in [\zeta_\gamma,\zeta_{\gamma +1}]$
\sn
\item[$\bullet$]  $g$ is the function with domain $\zeta_{\gamma(*)}$
\sn
\item[$\bullet$]  $g(\varepsilon) = g_\gamma(\varepsilon - 
\zeta_\gamma)$ when $\varepsilon \in 
[\zeta_\gamma,\zeta_{\gamma +1})$ and $\gamma < \gamma(*)$
\sn
\item[$\bullet$]  $h_k$ is the function with domain $\Lambda_*$
  defined by: if $\bar\eta \in \Lambda_\zeta,\zeta = \zeta_\gamma +
  \varp$ and $\varp < \varp_\gamma$ then $h_k(\bar\eta) =
  h_{\gamma,k}(\bar\eta) \cup h^*_k(\bar\eta)$.
\end{enumerate}
\mn
Now check Definition \ref{a12}(4).]

Fix $\gamma < \gamma(*)$ and we shall prove for it the condition from $(*)_6$.
If $|\Lambda_\gamma| < \theta_m$ the desired statement 
follows from the induction
hypothesis so assume $|\Lambda_\gamma| = \theta_m$.  Let $\langle
\eta_{\gamma,\alpha}:\alpha < \theta_m\rangle$ list
$\{\nu_k:\bar\nu \in \Lambda_\gamma$ and $k \in u\}$.

For $\beta < \theta_m$ let $\cU_{\gamma,\beta} =
\{\eta_{\gamma,\alpha}:\alpha < \beta\}$
and let $k(\beta)$ be the unique $k \in u$ such
that $\eta_{\gamma,\beta} \in {}^{\partial_k}(S_k)$.  Clearly
$|\cU_{\gamma,\beta}| < \theta_m$.
Also $\langle \cU_{\gamma,\beta}:\beta <
\theta_m\rangle$ is $\subseteq$-increasing continuous 
with union $\cup\{\Lambda^*_{\bold x}(\cU_{\gamma,\beta},\bold u):
\beta < \theta_m\} = \Lambda_\gamma$.

By induction on $\beta < \theta_m$ we choose $\langle 
\bar\Lambda_\beta,g_\beta,\bar h^\beta)$ such that
\mn
\begin{enumerate}
\item[$(*)_7$]
\begin{enumerate}
\item[(a)]  $\bar\Lambda_\beta = \langle
  \Lambda_{\gamma,\varepsilon}:\varepsilon < \varepsilon_\beta\rangle$
  is a partition of $\Lambda^*_{\bold x}(\cU_{\gamma,\beta},\bold u)$ so
  $\alpha < \beta \Rightarrow \bar\Lambda_\alpha \triangleleft 
\bar\Lambda_\beta$
\sn
\item[(b)]   each $\Lambda_{\gamma,\varepsilon}$ has
  cardinality $\le \theta_0$
\sn
\item[(c)]  $g_\beta:\varepsilon_\beta \rightarrow u$
  such that $\alpha < \beta \Rightarrow g_\alpha \subseteq g_\beta$
\sn
\item[(d)]  $\bar h^\beta = \langle h^\beta_k:k \in u\rangle$
\sn
\item[(e)]  $h_{\beta,k}:\Lambda_{\bold x}(\cU_{\gamma,\beta},\bold u) 
\rightarrow J_k$ and $\alpha < \beta
  \Rightarrow h^\alpha_k \subseteq h^\beta_k$
\sn
\item[(f)]  if $\varepsilon <
  \varepsilon_\beta,\bar\eta \in
  \Lambda_{\gamma,\varepsilon},g_\beta(\bar\eta) = k$ so $k \in u$ and
  $\bar\nu \in \cup\{\Lambda_{\gamma,\zeta}:\zeta < \varepsilon\}$ and
$\bar\nu \upharpoonleft (k,<0) = \bar\eta \upharpoonleft 
(k,<0)$ \then \, $i \in \partial_{\bold x,k} \backslash h_{\beta,k}(\bar\eta)
\Rightarrow \nu_k(i) \ne \eta_k(i)$.
\end{enumerate}
\end{enumerate}
\mn
For $\beta = 0,\Lambda^*_{\bold x}(\cU_{\gamma,\beta},\bold u) = \emptyset$ so
this is obvious.  For $\beta$ limit take unions.

Lastly, for $\beta = \beta_* +1$, it is enough to show that
$\Lambda^*_{\bold x}(\cU_{\gamma,\beta},\bold u)$ is 
$(\infty,\theta^+_0,u)$-free over
$\Lambda^*_{\bold x}(\cU_{\gamma,\beta_*},\bold u)$.  Now $\cU_{\gamma,\beta}
  \backslash \cU_{\gamma,\beta_*} =
  \{\eta_{\gamma,\beta_*}\},\eta_{\gamma,\beta_*} \in 
{}^{\partial_{k(\beta_*)}}(S_{k(\beta_*)})$, 
hence $\eta_{\gamma,\beta_*} \in \cU$.   
So let $u_{\gamma,\beta} = u \backslash \{k(\beta_*)\},
\bold u_{\gamma,\beta} = \bold u \cup
\{\eta_{\gamma,\beta_*}\}$, so $\bold u_{\gamma,\beta} \in \add_{\bold
  x}(u^\perp_{\gamma,\beta}),\bold u_{\gamma,\beta} \subseteq \{0,\dotsc,\bold
k_{\bold x}-1\}$ has $m$ members because $|\bold u| = \bold m = m+1$.
Recall $\Lambda^*_{\bold x}(\cU_{\gamma,\beta},\bold
  u_{\gamma,\beta}) = \Lambda^*_{\bold x}(\cU_{\gamma,\beta},\bold u) 
\backslash \Lambda^*_{\bold x}(\cU_{\gamma,\beta_*},\bold u)$ 
and by the induction hypothesis on $m$ we know $\Lambda^*_{\bold
  x}(\cU_{\gamma,\beta},\bold u_{\gamma,\beta})$ is
$(\infty,\theta^+_0,u_{\gamma,\beta})$-free so there is a witness
$(\bar\Lambda^*_{\gamma,\beta},g^*_{\gamma,\beta},\bar
h^*_{\gamma,\beta})$, i.e. is as in \ref{a12}(4)(d) for $k=1$,
in particular:
\mn
\begin{enumerate}
\item[$(*)_8$]  $\bar\Lambda^*_{\gamma,\beta} = \langle
  \Lambda^*_{\gamma,\beta,\zeta}:\zeta < \zeta_{\gamma,\beta}\rangle$
  is a partition of $\Lambda^*_{\bold x}(\cU_{\gamma,\beta},\bold
  u_{\gamma,\beta})$. 
\end{enumerate}
\mn
We define
\mn
\begin{enumerate}
\item[$(*)_9$]
\begin{itemize}
\item  $\varepsilon_\beta =
\varepsilon_{\beta_*} + \zeta_{\gamma,\beta}$
\sn
\item  $\Lambda_{\varepsilon_{\beta_*} + \zeta} =
  \Lambda^*_{\gamma,\beta,\zeta}$ for $\zeta < \zeta_{\gamma,\beta}$
\sn
\item  $g_\beta(\varepsilon_{\beta_*} + \zeta) =
  g^*_{\gamma,\beta}(\zeta)$ for $\zeta < \zeta_{\gamma,\beta}$,
i.e. $g_\beta$ is the function with domain
$\varepsilon_\beta$ extending $g_{\beta_*}$ and defined on
$[\varepsilon_{\beta_*},\varepsilon_\beta)$ as above
\sn
\item  $h_{\beta,k}$ is a function with domain
$\Lambda^*_{\bold x}(\cU_{\gamma,\beta,\bold u}) =
\cup\{\Lambda_\varepsilon:\varepsilon < \varepsilon_\beta\}$ 
extending $h_{\gamma,\beta_*,k}$ 
\sn
\item  $h_{\beta,k}(\bar\eta) =
  h^*_{\gamma,\beta,k}(\bar\eta)$ if $\bar\eta \in \Lambda^*_{\bold x}
(\cU_{\gamma,\beta},\bold u_{\gamma,\beta})$.
\end{itemize}
\end{enumerate}
\mn
Now check, notice that if $\xi < \varepsilon_{\beta_*} \le \varepsilon
< \varepsilon_\beta$ and $\bar\nu \in \Lambda_{\gamma,\xi}$ and $\bar\eta
\in \Lambda_{\gamma,\varepsilon} = \Lambda^*_{\gamma,\beta,\varepsilon
  - \varepsilon_{\beta_*}}$ and $m = g_\beta(\varepsilon) =
g^*_{\beta,\gamma}(\varepsilon - \varepsilon_{\beta_*})$ then $m \ne
k(\beta_*)$ and $\eta_{k(\beta_*)} \ne \nu_{k(\beta_*)}$, so no
problem arise and the rest should be clear.
\end{PROOF}

\noindent
In what follows we assume $\ell < \bold k \Rightarrow \partial_\ell = 
\partial$ to simplify, anyhow we have not sorted out what
occurred to $(B)(d)$ when $\bar\partial$ is not constant
\begin{theorem}  
\label{a51}
If (A) then (B) where:
\mn
\begin{enumerate}
\item[$(A)$] 
\begin{enumerate}
\item[(a)]  $\bar\partial = \langle \partial_\ell:\ell <
  \bold k\rangle$ such that $\ell < \bold k \Rightarrow 
\partial_\ell = \partial = \cf(\partial)$
\sn
\item[(b)]  $\mu_\ell \in \bold C_{\partial_\ell}$ for
  $\ell < \bold k$, see \ref{y12}, \ref{y16}
\sn
\item[(c)]  $\mu_\ell < \mu_{\ell +1}$ for $\ell < \bold k$
\sn
\item[(d)]  $\chi_\ell = 2^{\mu_\ell}$
\sn
\item[(e)]  $J_\ell = J^{\bd}_{\partial_\ell}$ for $\ell < \bold k$
  \oor \, for some regular $\sigma, \bigwedge\limits_{\ell} \sigma
  < \partial_\ell$ and $J_\ell = J^{\bd}_{\partial_\ell} \times
J^{\bd}_\sigma$ for $\ell < \bold k$ 
\end{enumerate}
\sn
\item[$(B)$]  there is $\bold x$ such that:
\sn
\begin{enumerate}
\item[(a)]  $\bold x$ is a combinatorial
  $\bar\partial$-parameter of cardinality
 $\le \chi_{\bold k-1}$ with $J_{\bold x,\ell} = J_\ell$
\sn
\item[(b)]  $\bold x$ has a $\bar\chi$-black box
\sn
\item[(c)]  $\bold x$ is $(\theta_*,\theta^+)$-free \when
  \, $n(*) \ge 1,\theta = \cf(\theta) \ge \partial,\theta_* = 
\theta^{+ (\partial \cdot n(*))} < \mu_0$ and 
$3n(*)+4 < \bold k$
\sn
\item[(d)]  $\bold x$ is $\theta_{**}$-free when $\theta_{**} =
  \partial^{+(\partial \cdot n(*)+\partial)} < \mu_0,3n(*) +4 
< \bold k,n(*) \ge 1$.
\end{enumerate}
\end{enumerate}
\end{theorem}

\begin{remark}
\label{a42}
Note that the proof is somewhat easier when
$\theta^{+ \partial(n(*)+1)} < \mu_0$ and the loss is minor.
\end{remark}

\begin{PROOF}{\ref{a51}}  
For each $\ell < \bold k$ we can choose $\bold x_\ell$ such that:
\mn
\begin{enumerate}
\item[$\oplus$] 
\begin{enumerate}
\item[(a)]  $\bold x_\ell$ is a combinatorial $\langle
  \partial_\ell\rangle$-parameter 
\sn
\item[(b)]  $\bold x_\ell$ is $(\theta^{+\partial +1},
\theta^{+4})$-free when $\partial \le \theta < \mu_0$
\sn
\item[(c)]  $\bold x_\ell$ has a $\chi_\ell$-pre-black box, moreover
\sn
\item[(c)$^+$]  $\bold x_\ell$ has $\chi_\ell$-black box
\sn
\item[(d)]  $\Lambda_{\bold x_\ell}$ has cardinality $\chi_\ell$
\sn
\item[(e)]  $\bold x_\ell$ is $\partial^+$-free.
\end{enumerate}
\end{enumerate}
\mn
[Why?  By \cite[0.4,0.5,0.6=y19,y22,y40]{Sh:1008}, when we weaken
clause $\oplus(i)$ to $\bold x_\ell$ has a $\chi_\ell$-pre-black box;
anyhow  we elaborate
(also when $\partial = \aleph_0$ we have to say a little more) so let
$\ell < \bold k$ and $\mu = \mu_\ell,\lambda = \chi_\ell$.

First, assume that there is a $(\mu^+,J^{\bd}_\partial)$-free subset $\cF$
of ${}^\partial(\mu)$ of cardinality $\lambda = 2^\mu$.  We define
$\bold x_\ell$ by $\Lambda_{\bold x_\ell} = \{\langle \eta
\rangle:\eta \in \cF\},J_{\bold x_\ell} = J^{\bd}_\partial$.

Now $\bold x_\ell$ has $\lambda$-black box (by \cite[\S3]{Sh:898});
easy as the number of functions from ${}^{\partial >}(\mu)$ to
$\lambda$ is $\lambda^\mu = \lambda$).  Note also that
$\bold x_\ell$ is tree-like; this is enough for
$\oplus(a),(b),(c),(d),(e)$.  \Wilog \, there is a list $\langle
\eta_\alpha:\alpha < \lambda\rangle$ of the elements of
$\cF$ such that $\alpha < \beta
\Rightarrow \eta_\alpha <_{J^{\bd}_\partial} \eta_\beta$ (see the
proof of \cite[3.10=L1f.28]{Sh:898}.  Let $\langle \cU_\alpha:\alpha <
\lambda\rangle$ be a sequence of pairwise disjoint subsets of
$\lambda$ each of cardinality $\lambda$ such that $\min(\cU_\alpha) >
\mu^\omega \cdot \alpha$ and let $\cF_\alpha = \{\eta_\beta:\beta \in
\cU_\alpha\}$ and $\nu_\alpha = \eta_\beta$ when $\alpha \in [\mu \cdot
\beta,\mu \cdot \beta + \mu)$ and $\cF_* = \bigcup\limits_{\alpha}
\cF_\alpha$.  Now we choose $\Lambda_{\bold x_\ell} = \{\langle
\eta\rangle:\eta \in \cF_*\},\Lambda^*_\alpha = \{\langle
\eta\rangle:\eta \in \cF_\alpha\}$ so $\langle \nu_\alpha:\alpha <
\lambda\rangle$ witness $\bold x_\ell$ has $\bar\chi$-black box.

Second, assume that there is no $\cF$ as above, it follows
that $\lambda = 2^\mu$ is regular (see \cite[0.4]{Sh:1008} or
\cite[\S3]{Sh:898} using the
``no hole claim" combining).  Note that if there is a
$\langle \partial\rangle$-c.p. $\bold x$ which is
$(\theta_2,\theta_1)$-free, $\Lambda_{\bold x} \subseteq {}^\partial
\mu$ pedantically $\Lambda_{\bold x} \subseteq \{\langle
\eta\rangle:\eta \in {}^\partial \mu\}$ and $|\Lambda_{\bold x}| =
2^\mu,J_{\bold x} \supseteq J^{\bd}_\partial$, \then \, there is such
$\bold y$ with $J_{\bold y} = J^{\bd}_\partial$ and as above both have
the $\lambda$-BB.

Now as $\lambda = \cf(\lambda) = 2^\mu,\mu \in \bold C_\partial$,
there is a sequence $\langle \lambda_i:i < \partial\rangle$ of regular
cardinals $< \mu$ and $\partial$-complete ideal $J$ on $\partial$
extending $J^{\bd}_\partial$ such that $\chi = \tcf(\prod\limits_{i
  < \partial} \lambda_i,<_J)$ so let $\langle \eta_\alpha:\alpha <
\chi\rangle$ be $<_J$-increasing cofinal in $(\prod\limits_{i
  < \partial} \lambda_i,<_J)$.  By \cite[0.1=L41]{Sh:1008} there is $S \in
\check I_{\theta^+}[\lambda]$ such that: if $\delta < \lambda \wedge
\cf(\delta) \ge \theta^{+4}$ then $\{\delta_1 < \delta:\cf(\delta_1) =
\theta^{+3}$ and $S \cap \delta_1$ is a stationary subset of
$\delta_1\}$ is stationary in $\delta$, note that there $\cf(\delta) =
\theta^{+4}$, but the general case of $\cf(\delta) \ge \theta^{+4}$
follows.

Assume $\lambda = \cf(\lambda),S \subseteq \lambda,\sup(S) = \lambda$
and we recall some things from \cite{Sh:1008}, $\bar f = \langle
f_\alpha:\alpha < \lambda\rangle$ is $<_J$-increasing, $J$ an ideal on
$\partial,f_\alpha:\partial \rightarrow \Ord$ and $u_\alpha \subseteq
\alpha$ for $\alpha < \lambda$, we say $\bar f$ obeys the sequence of
sets $\bar u = \langle u_\alpha:\alpha < \lambda\rangle$ \when \, for
every $\beta \in A_\alpha$ we have $\bigwedge\limits_{\gamma \in
  \dom(f)} f_\beta(\gamma) < f_\alpha(\gamma)$ and if $\alpha \in S$
is a limit ordinal then $f_\alpha(\gamma) = \sup_{\beta \in A_\alpha}
(f_\beta(\gamma)+1)$ for every $\gamma \in \dom(f)$.

For $\theta = \cf(\theta) < \lambda$, we say $\bar u$ as above is a
witness for $S \in \check I_\theta[\lambda]$ \when \,:
\mn
\begin{itemize}
\item  $\alpha \in \cS \Rightarrow \cf(\alpha) = \theta$
\sn
\item  $\alpha < \lambda \Rightarrow |u_\alpha| < \theta$
\sn
\item  $\alpha \in u_\beta \Rightarrow u_\alpha = u_\beta \cap \alpha$
\sn
\item  there is a club $E$ of $\lambda$ such that
if $\delta \in S \cap E$ then $u_\alpha$ is an unbounded subset
 of $\alpha$ of order type $\theta$.
\end{itemize}
\sn
We say $\bar f$ is good in a limit ordinal $\delta < \lambda$ \when \,
there are $u \subseteq \delta = \sup(\delta)$ and $\bar w = \langle
w_\alpha:\alpha \in u\rangle \in {}^u J$ such that $\alpha \in u
\wedge \beta \in u \wedge \alpha < \beta \wedge i \in \partial
\backslash (w_\alpha \cup w_\beta) \Rightarrow f_\alpha(i) < f_\beta(i)$.
 So \wilog \, $\bar f$
obeys a witness for $S \in \check I_{\partial^+}[\lambda]$, hence is
good in $\delta$ when $\delta \in S \vee (S \cap \delta)$ is a
stationary subset of $\delta$ and $\cf(\delta) \in
(\theta,\theta^{+\partial})$.  Hence $\{f_\alpha:\alpha < \lambda\}$
is $(\theta^{+\partial+1},\theta^{+4})$-free for every $\theta
\ge \partial$, see \cite[0.4=Ly19]{Sh:1008}; in more details, in
\cite[0.4=Ly19]{Sh:1008} we conclude (A) or (B), now (A) there is
stronger whereas if (B) ther holds see \cite[0.6(e)=Ly40(e)]{Sh:1008}.

Together we are done except for $\oplus(c)^+$ which is proved as in
the ``first".]

So we have finished proving $\oplus$. 

Let $\bold x = \bold y_{\bold k}$ where for $m \in \{1,\dotsc,\bold
k\}$ we let $\bold y_m = \bold x_0 \times \bold x_1 \times \ldots \times \bold
x_{m-1}$ and we shall show it is as required.

\underline{Clause (B)(a)} which says ``$\bold x$ is a combinatorial
$\bar\partial$-parameter of cardinality $\chi_{\bold k-1}$", 
holds by \ref{a30}(1), i.e. we can prove
``$\bold y_m$ is a $\langle \partial_\ell:\ell < m\rangle$-c.p. of
cardinality $\chi_{m-1}$" by induction on $m = 1,\dotsc,\bold k$.

\underline{Clause (B)(b)} which says ``$\bold x$ has a $\bar\chi$-BB" holds 
 by \ref{a34}, that is, again by induction on $m = 1,\dotsc,\bold k$ we can
 prove that $\bold y_m$ has the $\langle \chi_\ell:\ell < m\rangle$-BB.

We now shall prove:
\newline
\underline{Clause (B)(c)}: we deduce it from \ref{a41} +
$\oplus(b)$.  We are given $\theta,n(*)$ as there.  
Let $\langle \theta_m:m \le m(*)\rangle$ be defined by: $m(*) =
3n(*)+4,\theta_\iota := \theta^{+ \iota}$ for $\iota = 0,1,2,3$ and
$\theta_{3+3m + \iota} := (\theta_{3+3m})^{+(\partial + \iota)}$ 
for $\iota=1,2,3$ when $m < n(*)$ and 
$\theta_{m(*)} := \theta^{+\partial +1}_{3n(*)+4} < \mu_0$; the ``$\le
\mu_0$" holds by the assumption of clause (B)(c).  Note 
that if $\theta_{m+1} = \theta^+_m$ then ``$\bold x_\ell$ being
$(\theta_{m+1},\theta^+_m)$-free" is trivial.

To apply Theorem \ref{a41} with $\bold x_\ell$ as in $\oplus$
above, $\bold x$ as above, $\bold m = m(*),u = \{0,\dotsc,\bold k-1\}$
has $\bold m$ members and $\theta_\ell$ for $\ell \le \bold m$ as
above; we have to verify clauses (a)-(f) of $\boxplus$ of \ref{a41}.

Now clause (a) stating $\bold x_\ell$ is a combinatorial
$\langle \partial_\ell \rangle$-parameter holds by $\oplus(a)$.

Now clause (b) stating $\bold x = \bold x_0 \times \ldots \bold
x_{\bold k-1}$ holds by the choice of $\bold x$ above.

Clause (c) stating ``$u \subseteq \{0,\dotsc,\bold k-1\}$ and $\bold m
= |u|>0$" holds by the choice of $u$ and the assumption on $m(*)$.

Clause (d) stating ``$\theta_0 < \ldots < \theta_{\bold m}$"
holds by the choice of the $\theta_\ell$'s above.  Notice that each
$\theta_\ell (\ell > 0)$ is a successor and hence regular. 

Clause (e) stating ``$\partial_{\bold x_\ell} \le \theta_0$ for $\ell <
\bold k$", this holds because $\theta_0 = \theta \ge \partial = \partial_\ell$
for $\ell < \bold k$.

Clause (f) stating ``$\bold x_\ell$ is
$(\theta_{m+1},\theta^+_m)$-free" ``when $\ell \in u,m < \bold m$" holds,
we check this by cases.
\medskip

\noindent
\underline{Case 1}:  $\theta_{m+1} = \theta^+_m,(f)$ holds trivially.
\medskip

\noindent
\underline{Case 2}:  $m=3,(\theta_{m+1},\theta^+_m) =
(\theta^{+(\partial +1)},\theta^{+4})$ holds by clause (b) of $\oplus$.
\medskip

\noindent
\underline{Case 3}:  $m=3n+3$ where $n < n(*)$ 
so $(\theta_{m+1},\theta^+_m) =
(\theta^{\partial \cdot (n+1)+1},\theta^{\partial \cdot n+4})$

By clause (b) of $\oplus$ above applied to $\theta
= \partial^{+ \partial \cdot n}$.  

So all clauses of $\boxplus$ of
Theorem \ref{a41} hold, hence its conclusion which says $\bold x$ in
$(\theta_{\bold m},\theta^+_0)$-free but $\theta_{\bold m} =
\theta_*$ and $\theta_0 = \partial$, so we are done proving clause (c)
of \ref{a51}(B).
\medskip

\noindent
\underline{Clause (B)(d)}:  says that ``$\bold x$ is $\theta_*$ free"
assuming $\theta_* = \partial^{+(\partial \cdot n(*) + \partial)} 
< \mu_0,3m +4 < \bold k$ and 
$\bigwedge\limits_{\ell < \bold k} \partial_\ell = \partial$.
We will deduce it from clause (B)(c) by applying it choosing $\theta'_*
= \theta^{+ \partial \cdot n(*)+4},\theta  = \partial$ and
$m(*)=m$.  The assumptions in clause (c) holds: $\theta = \partial^+$ so
$\theta \ge \partial$ and $\theta'_*$ is as $\theta_*$ is
there and $\theta'_* < \mu_0$ by an assumption of clause (d) which
also says $3n(*)+ 4 < \bold k$.

So the conclusion of clause (c) holds, i.e. $\bold x$ is
$(\theta'_*,\theta)$-free.  But $\theta_* \le \theta'_*$ so $\bold x$
is $(\theta_*,\partial^+)$-free.  Also each $\bold x_\ell$ is
$\partial^+$-free by $\boxplus(e)$ hence by \ref{a15} the last two
statements implies $\bold x$ is $\theta_*$-free.
\end{PROOF}

\begin{conclusion}
\label{a52}
1) If $\sigma < \partial$ are regular and $\chi \ge \partial$ and
$n \ge 1$ \then \, there is an $\aleph_{\partial \cdot n}$-free, 
$m$-c.p. $\bold x$ for some $m$ which has the $\chi$-BB and
$|\Lambda_{\bold x}| < \beth_{\partial \cdot \omega}(\chi)$ and
$J_{\bold x,m} = J^{\bd}_\partial \times J^{\bd}_\sigma$.

\noindent
2) If $\sigma = \partial$ is regular and $\chi \ge \partial$ and
$n \ge 1$ \then \, there is an $\aleph_{\partial \cdot n}$-free
m-c.p. (for some $m$), $\bold x$ have the $\chi$-BB which is not free
(really follows) and $\Lambda_{\bold x}$ is not even 
the union of $\le \chi$ free subsets and $\bold x$ has cardinality 
$< \beth_{\partial \cdot \omega}(\chi) + \beth_{\omega_1}(\chi)$ 
and $J_{\bold x,m} = J^{\bd}_\partial$.

\noindent
3) If $m = 3n+5,\sigma = \cf(\sigma) < \partial =
\cf(\partial) < \chi < \mu_0 < \ldots < \mu_{m-1}$ and
$\mu_\ell \in \bold C_\partial$ for $\ell < m,\lambda_\ell =
\cf(2^{\mu_\ell}),S_\ell \subseteq \{\delta <
 \lambda_\ell:\cf(\delta) = \sigma\}$ stationary from 
$\check I_\sigma[\lambda_\ell]$ and $J =  J^{\bd}_\partial \times 
J^{\bd}_\sigma$ \then \, we have (A) or (B), where:
\mn
\begin{enumerate}
\item[$(A)$]  for some $\ell$
\sn
\begin{enumerate}
\item[$(a)$]   there is an $\cF \subseteq
  {}^\partial(\mu_\ell)$ of cardinality $2^{\mu_\ell}$ which is
  $\mu^+_\ell$-free, i.e. is
$(\mu^+_\ell,J^{\bd}_\partial)$-free, see Definition \ref{y37}(1), and
even $(\mu^+_\ell,J)$-free,
\sn
\item[$(b)$]   hence letting $\bold x$ be the
1.-c.p. such that $\Lambda_{\bold x} = \{\langle \eta\rangle:\eta \in
\cF\}$, it is a $2^{\mu_\ell}$-BB for $\bold x$ which is
$\mu^+_\ell$-free and $J_{\bold x} = J$
\end{enumerate}
\sn
\item[$(B)$]   we can choose $\bold x = \bold x_0 \times \ldots \times
   \bold x_{m-1},\bold x_\ell$ is a 1-c.p., $\Lambda_{\bold x_\ell} =
\{\eta_{\ell,\delta}:\delta \in S_\ell\},\lim_{J_{\bold
   x_\ell}}(\eta_{\ell,\delta}) = \delta$, moreover $\eta_{\ell,\delta}$
is increasing with limit $\delta$ and $J_{\bold x_\ell} = J_\sigma
\odot J_\partial$ and $\bold x_\ell$ has the $\chi$-BB if $\chi < \mu_\ell$.
\end{enumerate}
\mn
4) Given $n,m,\sigma < \partial < \chi$ as in part (3), we can find
$\mu_\ell$ (and $\lambda_\ell,S_\ell$) as there such that:
\mn
\begin{enumerate}
\item[$(a)$]  if $\partial > \aleph_0$ then $\mu_\ell =
  \beth_{\partial \cdot (1 + \ell)}(\chi)$, we'll have ``$\bold x$ is
  $\theta_*$-free" we need $\chi \ge \theta$
\sn
\item[$(b)$]  if $\partial = \aleph_0$ for some club $E$ of $\omega_1$ and 
$\mu_\ell  \in \{\beth_\delta(\chi):\delta \in E\}$ are O.K.
\end{enumerate}
\end{conclusion}

\begin{PROOF}{\ref{a52}}
1) Let $\bold k = 3n+5$ and for $\ell < \bold k$ we let $\partial_\ell
= \partial,\mu_\ell = \beth_{\partial \cdot (1 + \ell)}
(\partial^{+(\partial \cdot n+1)} + \chi)$ and $\chi_\ell =
2^{\mu_\ell}$.  So each $\mu_\ell$ is strong limit of cofinality
$\partial = \cf(\partial) > \sigma \ge \aleph_0$, recalling \ref{y16}
we have $\mu_\ell \in \bold C_{\partial_\ell}$, i.e. clause (A)(b) of
Theorem \ref{a51} holds.

Clauses (A)(a),(c),(d),(e) of \ref{a51} are obvious hence there is
$\bold x$ as in clause (B) of \ref{a51}, in particular it is
$\partial^{+(\partial \cdot n+1)}$-free.  Also $\partial^{+(\partial
  \cdot n+1)} < \beth_{\partial \cdot \omega}(\chi)$ hence also
$\mu_\ell = \beth_{\partial \cdot (n+2)}(\partial^{+(\partial n+1)} +
\chi)$ is $< \beth_{\partial \cdot \omega}(\chi)$ hence
$|\Lambda_{\bold x}| \le 2^{\mu_{\bold k}-1} < \beth_{\partial \cdot
  \omega}(\chi)$, so we are done.

\noindent
2) If $\partial > \aleph_0$, the proof of part (1) holds and
$|\Lambda_{\bold x}| < \beth_{\partial \cdot \omega}(\chi)$.  If
$\partial = \aleph_0$, we know (see \cite{Sh:g}) that there is a club
$E$ of $\omega_1$ consisting of limit ordinals 
such that $\delta \in E \Rightarrow
\beth_\delta(\chi) \in \bold C_\partial$.  We define $\bold
k,\partial_\ell$ as above and for $\ell < \bold k$ let $\delta_\ell$
be the $\ell$-th member of $E$ and let $\mu_\ell
=\beth_{\delta_\ell}(\chi)$, and we continue as in the proof of part (1). 

\noindent
3) This is straightforward by \cite{Sh:898} but we elaborate to some
extent.  First assume that for some $\ell < \bold k$ 
clause (A)(a) of \ref{a52}(3) holds, so $\bold x$ from (A)(b) 
is a well defined 1-c.p. and is
$\mu^+_\ell$-free and letting $\chi = 2^{\mu_\ell}$ there is a
$\chi$-BB for $\bold x$ because the number of $h:{}^{\partial
  >}(\mu_\ell) \rightarrow \chi$ is $\le \chi^{\mu_\ell} = \chi$, and
diagonalizing we can choose a $\chi$-pre-BB for $\bold x$ (see
\ref{a20}).  To get a $\chi$-BB we work as in the proof of \ref{a34}(2).

So assume there is no such $\ell$.  Then for each $\ell$, we know that
$\lambda_\ell = 2^{\mu_\ell}$ is regular 
(see \cite[3.10(3)=L1f.28,pg.39]{Sh:898}).  By the proof of $\oplus$ in
the beginning of the proof of \ref{a51}, there is $\bold x_{\ell,1}$ as
there so as $\Lambda_{\bold x_{\ell,1}} \subseteq
{}^\partial(\mu_\ell)$.  
By \cite[3.6=L1f.21]{Sh:898}, we know that $\alpha < \lambda_\ell
\Rightarrow |\alpha|^\sigma < \lambda_\ell$, hence obviously there is a
stationary set $S_\ell \subseteq \check I_\sigma[\lambda_\ell]$, 
(in fact, $\{\delta < \lambda_\ell:\cf(\delta) = \sigma\}$ belongs to
$\check I_\sigma[\lambda_\ell]$), see \cite[Claim 2.14]{Sh:420})
and \wilog \, $\delta \in S_\ell \Rightarrow \mu^\omega_\ell|\delta$.  

Hence we can find $\bar\nu = \langle \nu_\delta:\delta \in
S_\ell\rangle$ such that:
\mn
\begin{enumerate}
\item[$\bullet$]  $\nu_\delta \in {}^\sigma \delta$ is increasing with
  limit $\delta$
\sn
\item[$\bullet$]  $\nu_{\delta_1}(i_1) = \nu_{\delta_2}(i_2)
  \Rightarrow \ell_1 = \ell_2 \wedge \nu_{\delta_1} \rest i_1 =
  \nu_{\delta_2} \rest i_2$
\sn
\item[$\bullet$]  $\nu_\delta(i)$ is divisible by $\mu_\ell$.
\end{enumerate}
\mn
Let $\langle \rho_\delta:\delta \in S_\ell\rangle$ list
$\Lambda_{\bold x_\ell,1}$ and for $\delta \in S_\ell$ let $\eta_\delta
\in {}^\partial \delta$ be: if $i < \partial,j < \sigma$ then
$\eta_\delta(\sigma i+j) = \nu_\delta(j) + \rho_\delta(i)$.  We define
$\bold x_\ell$ by $\Lambda_{\bold x_\ell} = \{\eta_\delta:\delta \in
S_\ell\},J_{\bold x_\ell} = J_\sigma \circledast J^\delta_\partial$, etc.
Now
\mn
\begin{enumerate}
\item[$(*)$]  $\bold x_\ell$ is a $\langle \partial \rangle$-pre-BB of
  cardinality $\chi_\ell$, with the freeness properties from \ref{a51}.
\end{enumerate}
\mn
What about $\chi$-pre-BB?  By \cite[\S3]{Sh:898} this holds whenever $\chi
< \mu_\ell$, which is enough for applying.  To get $\chi$-BB let
$\langle \delta(\zeta):\zeta < \lambda\rangle$ list $S_\ell$ in increasing
order and let $\langle S_\alpha:\alpha < \lambda_\ell\rangle$ be a
sequence of pairwise disjoint stationary subsets of $S_\ell$ such that
$\min(S_\alpha) > \delta(\alpha)$.  Let $\nu_\xi =
\eta_{\delta(\zeta)}$ when $\zeta \cdot \mu \le \xi < \zeta \cdot \mu
+ \mu$.  

We define: $\Lambda_\alpha = \Lambda^\ell_\alpha = 
\{\eta_\delta:\delta \in S_\alpha\}$ so
for each $\alpha$ there is a $\chi_\ell$-pre-BB for $\Lambda_\alpha$
and we continue as in the proof of \ref{a51}.
We now continue as in part (1) by inside the proof of \ref{a51}.

\noindent
4) By the proofs above this should be clear.
\end{PROOF}

\begin{discussion}  
\label{a53}
1) The following statement appears in \cite[0.4=Ly19]{Sh:1008}.  If
$\sigma = \cf(\sigma) < \kappa = \cf(\kappa)$ and $\mu \in \bold
C_\kappa$, then at least one of the following holds:
\mn
\begin{enumerate}
\item[(A)]   there exists a $\mu^+$-free $\cF \subseteq {}^\kappa \mu$
  of cardinality $\lambda = 2^\mu$
\sn
\item[(B)]   $\lambda = 2^\mu$ is regular and there is a
  $(\lambda,\mu,\sigma,\kappa)-5$-solution.
\end{enumerate}
\mn
If (A) holds, then we get more than promised
(i.e. $\mu^+_\ell$-freeness).  Hence we may assume, without loss of
generality, that (B) holds.  We shall return to this point (and then
recall the definition of 5-solution).

\noindent
2) We can vary the definition of the BB, using values in $\chi$ or using
models.

\noindent
3) We can use just product of two combinatorial parameters but with
   any $\bold k_{\bold x}$.  At present this makes no real difference.
\end{discussion}

\begin{discussion}  
\label{a54}
Assume $\bold x$ is a combinatorial $\bar\partial$-parameter, $\bar\partial =
\bar\partial_{\bold x}$ and $\bar\partial' = \langle
\partial'_\ell:\ell < \bold k_{\bold x}\rangle$ is a sequence of
limit ordinals such that $\ell < \bold k \Rightarrow \cf(\partial'_\ell) =
\partial_\ell$.

It follows that there is $\bold y$ such that:
\mn
\begin{enumerate}
\item[$(*)$]
\begin{enumerate}
\item[(a)]  $\bold y$ is a combinatorial $\bar\partial'$-parameter
\sn
\item[(b)]  $\bold S_{\bold y,\ell} = \{\partial'_\ell 
\alpha + i:\alpha \in S_{\bold x,\ell}$ and $i < \partial'_\ell\}$
\sn
\item[(c)]  $\Lambda_{\bold y} = \{g(\bar\eta):\bar\eta
  \in \Lambda\}$ where
\sn
\item[(d)]  $g:\bar S^{[\bar\partial]}_{\bold x}
  \rightarrow \bar S^{[\bar\partial']}_{\bold y}$ is defined as
  follows: for each $\ell < \bold k$ for some 
increasing continuous sequence 
$\langle \varepsilon_{\ell,i}:i \le \partial_\ell\rangle$
of ordinals with $\varepsilon_{\ell,0} =
  0,\varepsilon_{\ell,\partial_\ell} = \partial'_\ell$ we have 
 $g(\bar\eta) = \bar\nu$ iff $\bar\eta = \langle \eta_\ell:\ell <
  \bold k\rangle,\bar\nu = \langle \nu_\ell:\ell < k\rangle$
 and $\varepsilon_{\ell,i} \le \varepsilon < \varepsilon_{\ell,i+1}
  \Rightarrow \nu_\ell(\varepsilon) = \partial'_\ell \cdot \eta_\ell(i) +
  \varepsilon$ (of course, we could have ``economical")
\sn
\item[(e)]   if $\bold x$ has $\bar\chi-\BB$ and
  $\chi_\ell = \chi^{\partial'_\ell}_\ell$ for $\ell < \bold k$ then
  $\bold y$ has $\bar\chi-\BB$.
\end{enumerate}
\end{enumerate}
\end{discussion}

\begin{definition}
\label{a61}
We say a $\bold k$-c.p. $\bold x$ is $(\theta,\sigma)$-well orderable
$(\bar\chi,\bold k,1)$-BB \when \, there is a witness $\bar\Lambda$ which
means:
\mn
\begin{enumerate}
\item[$(a)$]  $\bar\Lambda = \langle \Lambda_\alpha:\alpha <
  \delta\rangle$
\sn
\item[$(b)$]  $\bar\Lambda$ is increasing continuous
\sn
\item[$(c)$]  $\cf(\delta) \ge \sigma$ and $\delta$ is divisible by $\theta$
\sn
\item[$(d)$]  if $\alpha < \delta$ then $\bold x \rest
  (\Lambda_{\alpha +1} \backslash \Lambda_\alpha)$ has
$\bar\chi$-pre-black box
\sn
\item[$(e)$]  if $\alpha < \delta,\bar\eta \in \Lambda_{\alpha +1}
  \backslash \Lambda_\alpha$ and $m < \bold k$ then the following set belongs
  to $J_{\bold x,m}$:
\sn
\begin{itemize}
\item    $\{i < \partial_{\bold x,m}$: for some $\bar\nu \in
  \Lambda_\alpha$ we have $\bar\eta \upharpoonleft (m,i) = \bar\nu
  \upharpoonleft (m,i)\}$.
\end{itemize}
\end{enumerate}
\end{definition}

\begin{claim}
\label{a63}
1) In Theorem \ref{a51}, for any $\theta = \cf(\theta) \le \chi_{\bold k-1}$
Clause (B)(b) can be strengthened to: $\bold x$ has $\theta$-well
orderable $\bar\chi$-black box.

\noindent
2) Parallely in Conclusion \ref{a52}.
\end{claim}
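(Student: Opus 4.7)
The plan is to revisit the construction in Theorem \ref{a51} (and Conclusion \ref{a52}) and extract a continuous filtration of $\Lambda_{\bold x}$ from the structure of the last factor $\bold x_{\bold k-1}$. Recall that $\bold x=\bold x_0\times\dotsb\times\bold x_{\bold k-1}$, where each $\bold x_\ell$ is a $\langle\partial_\ell\rangle$-c.p.\ obtained from \cite{Sh:1008}, so that $\Lambda_{\bold x_\ell}=\{\eta_{\ell,\delta}:\delta\in S_\ell\}$ for a stationary $S_\ell\subseteq\lambda_\ell=\cf(2^{\mu_\ell})$, with $\lim_{J_{\bold x_\ell}}(\eta_{\ell,\delta})=\delta$ (cf.\ Conclusion \ref{a52}(3)(B)). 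The condition $\theta=\cf(\theta)\le\chi_{\bold k-1}$ guarantees $\theta\le\lambda_{\bold k-1}$, so we may pick an increasing continuous sequence $\langle\alpha_\xi:\xi<\delta\rangle$ cofinal in $\lambda_{\bold k-1}$ with $\cf(\delta)\ge\theta$.

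First I set $\Lambda^{\bold k-1}_\xi:=\{\eta_{\bold k-1,\delta}:\delta\in S_{\bold k-1}\cap\alpha_\xi\}$ and then lift to the product by defining
\[
\Lambda_\xi\;:=\;\bigl\{\bar\eta\in\Lambda_{\bold x}:\eta_{\bold k-1}\in\Lambda^{\bold k-1}_\xi\bigr\}.
\]
The chain $\bar\Lambda=\langle\Lambda_\xi:\xi<\delta\rangle$ is increasing continuous with $\bigcup\Lambda_\xi=\Lambda_{\bold x}$, yielding clauses (a)--(c) of Definition \ref{a61}. For clause (e), fix $\bar\eta\in\Lambda_{\xi+1}\setminus\Lambda_\xi$, so $\eta_{\bold k-1}=\eta_{\bold k-1,\delta^*}$ for the unique $\delta^*\in S_{\bold k-1}\cap[\alpha_\xi,\alpha_{\xi+1})$; any $\bar\nu\in\Lambda_\xi$ has $\nu_{\bold k-1}=\eta_{\bold k-1,\delta'}$ with $\delta'<\alpha_\xi\le\delta^*$. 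For $m<\bold k-1$, equality $\bar\eta\upharpoonleft(m,i)=\bar\nu\upharpoonleft(m,i)$ forces $\eta_{\bold k-1}=\nu_{\bold k-1}$, which fails; so the coincidence set is empty. For $m=\bold k-1$, it forces $\eta_\ell=\nu_\ell$ for $\ell<\bold k-1$ and $\eta_{\bold k-1,\delta^*}(i)=\eta_{\bold k-1,\delta'}(i)$; but $\eta_{\bold k-1,\delta^*}(i)$ is $<\delta^*$-valued only on a $J_{\bold x,\bold k-1}$-small set by $\lim_{J_{\bold x,\bold k-1}}\eta_{\bold k-1,\delta^*}=\delta^*$, and on its complement $\eta_{\bold k-1,\delta^*}(i)\ge\delta^*>\delta'>\eta_{\bold k-1,\delta'}(i)$. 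Hence the set of $i$ realising the coincidence lies in $J_{\bold x,\bold k-1}$, giving clause (e).

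For clause (d), I observe that each slice $\Lambda_{\xi+1}\setminus\Lambda_\xi$ is, via the product identification, naturally isomorphic to $\Lambda_{\bold x_0\times\dotsb\times\bold x_{\bold k-2}}\times(\Lambda^{\bold k-1}_{\xi+1}\setminus\Lambda^{\bold k-1}_\xi)$. Since $\bold x_{\bold k-1}$ already has a $\chi_{\bold k-1}$-BB, its underlying partition from \cite{Sh:1008} can be chosen to refine (or be refined by) the partition $\langle\Lambda^{\bold k-1}_{\xi+1}\setminus\Lambda^{\bold k-1}_\xi:\xi<\delta\rangle$, so each difference piece supports a $\chi_{\bold k-1}$-pre-BB for $\bold x_{\bold k-1}$. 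Combining this with the $\bar\chi\rest(\bold k-1)$-BB of $\bold x_0\times\dotsb\times\bold x_{\bold k-2}$ (established inductively in the proof of Theorem \ref{a51}) via the product-BB Claim \ref{a34} produces a $\bar\chi$-BB on each slice $\Lambda_{\xi+1}\setminus\Lambda_\xi$, as required. Part (2) of the claim is then immediate by repeating this argument inside the construction of Conclusion \ref{a52}, whose $\bold x$ has the same product structure.

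The main obstacle is the alignment in clause (d): one must be sure that the BB for $\bold x_{\bold k-1}$ inherited from \cite{Sh:1008} can be arranged so that its partition of $\Lambda_{\bold x_{\bold k-1}}$ into pre-BB pieces sits compatibly inside the filtration by initial segments of $S_{\bold k-1}$. This amounts to unpacking the BB construction in \cite{Sh:1008} and noting that the pieces there are themselves indexed along a filtration of $S_{\bold k-1}$, so a suitable common refinement exists without disturbing the pre-BB property; once this is in hand, Claim \ref{a34} does the rest mechanically.
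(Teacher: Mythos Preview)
The paper states Claim \ref{a63} without proof, so there is nothing to compare your argument against directly. Your strategy---filtering $\Lambda_{\bold x}$ by initial segments of the index set $S_{\bold k-1}$ of the top factor $\bold x_{\bold k-1}$---is the natural one, and your verification of clauses (a)--(c),(e) of Definition \ref{a61} is essentially correct (the phrase ``the unique $\delta^*$'' is a slip, since $S_{\bold k-1}\cap[\alpha_\xi,\alpha_{\xi+1})$ need not be a singleton, but your argument for (e) only uses $\delta^*\ge\alpha_\xi>\delta'$ and so survives unchanged).

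Clause (d) is where the content lies, and your treatment there remains a sketch. Two concrete points need attention. First, the step ``$\theta\le\chi_{\bold k-1}$ guarantees $\theta\le\lambda_{\bold k-1}$'' is not automatic for a regular $\theta$ below a possibly singular $\chi_{\bold k-1}$; you need the observation that in fact $\lambda_{\bold k-1}=\cf(\chi_{\bold k-1})=\chi_{\bold k-1}$ here, which follows because a $\chi_{\bold k-1}$-BB on $\bold x_{\bold k-1}$ forces $|\Lambda_{\bold x_{\bold k-1}}|\ge\chi_{\bold k-1}$ (Remark \ref{a19}) while $|S_{\bold k-1}|=\lambda_{\bold k-1}$. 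Second, and more substantively, the assertion that the BB partition from \cite{Sh:1008} ``can be chosen to refine (or be refined by)'' the initial-segment filtration is exactly the point to be proved: a common refinement does not automatically preserve the pre-BB property on each piece, and a slice that is too small cannot carry a $\chi_{\bold k-1}$-pre-BB at all. The honest fix is to go back to the mechanism of Claim \ref{a20}: the $\chi_{\bold k-1}$-BB there arises from an enumeration of ${}^{({}^{\partial>}\mu)}\chi_{\bold k-1}$ of length $\chi_{\bold k-1}$, and once $\chi_{\bold k-1}$ is regular one can choose the filtration $\langle\alpha_\xi\rangle$ and the enumeration simultaneously so that every successor slice $\Lambda^{\bold k-1}_{\xi+1}\setminus\Lambda^{\bold k-1}_\xi$ has size $\chi_{\bold k-1}$ and exhausts all colourings, hence carries its own full BB; after that \ref{a34} applies exactly as you say. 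This should be written out rather than asserted.
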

\newpage

\section {Building Abelian groups and modules with small dual} \label{Building}
\bigskip

\noindent
For transparency we restrict ourselves to hereditary rings.
\begin{convention}
\label{d0}
1) All rings $R$ are hereditary, i.e. if $M$ is a free $R$-module then
any pure sub-module $N$ of $M$ is free.

\noindent
2) An alternative is to interpret ``$G$ is a $\theta$-free ring" by
demanding $\cf(\theta) > \aleph_0$ and in the game of choosing $A_n \in [G]^{<
  \theta}$ increasing with $n$, the even player can guarantee the
sub-module $\langle \bigcup\limits_{n} A_n\rangle_G$ of $G$ is free.

We shall try to use a $\bar\partial$-BB to construct Abelian groups
and modules.  In \ref{d2} we present a quite clear case: the case
$\bigwedge\limits_{\ell} \partial_\ell = \aleph_0$, the ring is $\bbZ$ 
(and the equations are simple).  
Note that the addition of $z$ (in \ref{d2}(1)(b),
\ref{d4}(1)(a)) is natural when we are trying to prove $h \in \Hom(G,\bbZ)
\Rightarrow h(z)=0$ which is central in this section, but is not
natural for treating some other questions.  When dealing with $\TDC_\lambda$ we
may restrict ourselves to $G$ simply derived from $\bold x$, see
\ref{d2}(3), so can ignore \ref{d2}(1A),(2).
\end{convention}

\begin{definition}  
\label{d2}
Let $\bold x$ be a tree-like\footnote{In \cite{Sh:883} this was not
  necessary as the definition of $\eta \upharpoonleft (m,n)$ there is
  $\eta \upharpoonleft (m,<n+1)$ here.} (see Definition \ref{a3}(1))
combinatorial $\bar\partial$-parameter and let
$\bold k = \bold k_{\bold x}$.

\noindent
1) If $k < \bold k_{\bold x} \Rightarrow \partial_\ell = \aleph_0$, \then \,
   we say an Abelian group $G$ is derived from $\bold x$ \when \,
\mn
\begin{enumerate}
\item[$(a)$]  $G$ is generated by $X \cup Y$ where:
\sn
\begin{enumerate}
\item[$(\alpha)$]  $X = \{x_{\bar\eta \upharpoonleft (m,n)}:
\bar\eta \in \Lambda_{\bold x},
m < \bold k_{\bold x} \text{ and } n \in \bbN\} \cup \{z\}$
\sn
\item[$(\beta)$]  $Y = \{y_{\bar\eta,n}:\bar\eta \in 
\Lambda_{\bold x} \text { and } n \in \bbN\}$
\end{enumerate}
\sn
\item[$(b)$]  moreover generated freely except the following set of
equations

\begin{equation*}
\begin{array}{clcr}
\Xi_{\bold x} = \{(n+1) y_{\bar\eta,n+1} = y_{\bar\eta,n} -
&\Sigma\{x_{\bar\eta \upharpoonleft (m,n)}: m < \bold k\} -
a_{\bar\eta,n} z_{\bar\eta}: \\
  &\bar\eta \in \Lambda_{\bold x} \text{ and } n \in \bbN\}
\end{array}
\end{equation*}
\end{enumerate}
\mn
where
\mn
\begin{enumerate}
\item[$\bullet_1$]  $z_{\bar\eta} \in \Sigma\{\bbZ x_{\bar\eta
\upharpoonleft (m,n)}:\bar\eta \in \Lambda_{\bold x},m < \bold k,n \in
  \bbN\} \oplus \bbZ z$
\sn
\item[$\bullet_2$]  $a_{\bar\eta,n} \in \bbZ$.
\end{enumerate}
\mn
1A) We say the Abelian group $G$ is canonically derived from $\bold x$
\when \, above we omit the 
$z_{\bar\eta}$'s equivalently $a_{\bar\eta,n}=0$.  If
we omit $z$ we say strictly derived.

\noindent
2) We say the derivation of $G$ in part (1) is
well orderable (or ``$G$ or $\langle z_{\bar\eta}:\bar\eta \in 
\Lambda_{\bold x}\rangle$ universally respect $\bold x$") \when \, we
replace $\bullet_1$ above by:
\mn
\begin{enumerate}
\item[$\bullet'_1$]  there is a list $\langle \bar\eta_\alpha:\alpha <
\alpha_*\rangle$ of $\Lambda_{\bold x}$ such that $z_{\bar\eta_\alpha}
\in \Sigma\{\bbZ x_{\bar\eta_\beta \upharpoonleft (m,n)}:\beta <
\alpha,m < \bold k\} \oplus \bbZ z$ for every 
$\alpha < \alpha(*)$; such a sequence is called a witness.
\end{enumerate}
\mn
3) We add simply (derived from $\bold x$) \when \, $z_{\bar\eta} = z$ for every
$\bar\eta$.  
\end{definition}

\begin{remark}  
\label{d3}
1) We can replace $(n +1) y_{\bar\eta,n+1}$ by 
$k_{\bar\eta,n} y_{\bar\eta,n +1}$ with
$k_{\bar\eta,n} \in \{2,3,\ldots\}$.

\noindent
2) By combining Abelian groups, the ``simply derived" is enough for
 cases of the $\TDC_\lambda$.  Instead of ``simply derived" we 
may restrict $\langle z_{\bar\eta}:\bar\eta \in \Lambda_{\bold x}
\rangle$ more than in \ref{d2}(2). 
\end{remark}

\noindent
A more general case than \ref{d2} is:
\begin{definition}  
\label{d4}
1) We say an $R$-module $G$ is derived from a combinatorial
   $\bar\partial$-parameter $\bold x$ \when \, ($R$ is a ring and):
\mn
\begin{enumerate}
\item[$(a)$]  $G_*$ is an $R$-module freely generated by
\newline
$X_* = \{x_{\bar\eta \upharpoonleft (m,i)}:m < \bold k_{\bold x},i <
\partial_m$ and $\bar\eta \in \Lambda_{\bold x}\} \cup \{z\}$
\sn
\item[$(b)$]  the $R$-module $G$ is generated by $\cup\{G_{\bar\eta}:\bar\eta
\in \Lambda_{\bold x}\} \cup X_*$, also $G_* \subseteq G$
\sn
\item[$(c)$]  $G/G_*$ is the direct sum of $\langle (G_{\bar\eta} +
G_*)/G_*:\bar\eta \in \Lambda_{\bold x}\rangle$
\sn
\item[$(d)$]  $Z_{\bar\eta} \subseteq X_* \subseteq G_*$ 
for $\bar\eta \in \Lambda_{\bold x}$; if $Z_{\bar\eta} 
= \{z_{\bar\eta}\}$ we may write $z_{\bar\eta}$ instead of $Z_{\bar\eta}$
\sn
\item[$(e)$]  if $\bar\eta \in \Lambda_{\bold x}$ \then \, the
$R$-submodule $G_{\bar\eta} \cap G_*$ of $G$ is generated (not only
included in the submodule generated) by
$\{x_{\bar\eta \upharpoonleft (m,i)}:m < \bold k_{\bold x}$ and 
$i < \partial_{\bold x,m}\} \cup Z_{\bar\eta} \subseteq X_*$.
\end{enumerate}
\mn
1A) We say $\gx$ is an $R$-construction or $(R,\bold x)$-construction
\when \, it consists of $\bold x,R,G_*,G,\langle x_{\bar\eta}:\bar\eta \in
  \Lambda_{\bold x,< \bold k}\rangle,\langle
  G_{\bar\eta},Z_{\bar\eta}:\bar\eta \in \Lambda_{\bold x}\rangle$ as
above and we shall write $\Lambda_{\gx} = \Lambda_{\bold x},
G^{\gx}_* = G_*,G_{\gx} = G,G_{\gx,\bar\eta} =
G_{\bar\eta}$, etc., (so in \ref{d2}(1) we have a 
$\bbZ$-construction with $G_{\bar\eta}/(G_{\bar\eta} \cap G_*)$ being
isomorphic to $(\bbQ,+))$.  We may say $\gx$ is for $\bold x$ but we 
may write $G$ rather than $G_{\gx}$, etc.
  when $\gx$ is clear from the context.

\noindent
1B) For an $R$-construction $\gx$ we 
say: ``universally respecting $\bold x$" or ``$\gx$ is well
orderable" \when \, we can find $\bar\Lambda$ which $\gx$ obeys meaning:
\mn
\begin{enumerate}
\item[$(f)$]  $(\alpha) \quad \bar\Lambda = \langle \Lambda_\alpha:\alpha \le
  \alpha_*\rangle$ is increasing continuous
\sn
\item[${{}}$]  $(\beta) \quad \Lambda_{\alpha_*} = \Lambda_{\bold x}$
  and $\Lambda_0 = \emptyset$
\sn
\item[${{}}$]  $(\gamma) \quad$ if $\bar\eta \in \Lambda_{\alpha +1} \backslash
  \Lambda_\alpha$ and $m < \bold k$ then $\{i < \partial_m$:

\hskip25pt $(\exists \bar\nu \in \Lambda_\alpha)
(\bar\eta \upharpoonleft (m,i) = \bar\nu 
\upharpoonleft (m,i)\} \in J_{\bold x,m}$
\sn
\item[${{}}$]  $(\delta) \quad$ if $\bar\eta \in \Lambda_{\alpha +1} \backslash
  \Lambda_\alpha$ then $Z_{\bar\eta} \subseteq \langle
  \{G_{\bar\nu}:\bar\nu \in \Lambda_\alpha\} \cup \{z\}\rangle_G$.
\end{enumerate}
\mn
1C) We may say ``$G$ is derived from $\bold x$" and $\gx$ is derived
from $\bold x$.

\noindent
1D) We add ``simple" or ``simply derived" when $z_{\bar\eta} = z$
hence $Z_{\bar\eta} = \{z\}$ for every $\bar\eta \in \Lambda$.

\noindent
1E) We say $\gx$ is almost simple if $|Z_{\bar\eta} \backslash \{z\}|
\le 1$.  

\noindent
2) Above we say $\gx$ is a locally free derivation \underline{or} locally free
\underline{or} $G$ in part (1) is freely derived \when \, in addition:
\mn
\begin{enumerate}
\item[$(g)$]  if $\bar\eta \in \Lambda_{\bold x},m < \bold k$ and $w
  \in J_{\bold x,m}$ then $(G_{\bar\eta}/G_{\bar\eta,m,w})$ is 
a free $R$-module where $G_{\bar\eta,m,w}$ is 
the $R$-submodule of $G$ generated by
$\{x_{\bar\eta \upharpoonleft (m_1,i_1)}:m_1 < k,i_1 < \partial_{m_1}$
and $m_1 = m \Rightarrow i_1 \in w\} \cup Z_{\bar\eta}$ so
$G_{\bar\eta} = G^\perp_{\bar\eta,m,w} \oplus G_{\bar\eta,m,w}$ for
some $R$-submodule $G^\perp_{\bar\eta,m,w}$ and let $\gx$ determine it.
\end{enumerate}
\mn
3) Above we say $\gx$ is $(< \theta)$-locally free or $\gx$ is a free
$(< \theta)$-derivation \when \,\footnote{So ``$\gx$ is locally free"
  does not imply ``$\gx$ is $\theta$-free" because of clause (h).}
 in addition to part (1):
\mn
\begin{enumerate}
\item[$(g)^+$]  like (g) but the quotient
  $G_{\bar\eta}/G_{\bar\eta,m,w}$ is $\theta$-free
\sn
\item[$(h)$]  $\bold x$ is $\theta$-free.
\end{enumerate}
\mn
4) We say $\gx$ is a canonical $R$-construction or canonical $(R,\bold
x)$-construction \when \, $\bar\eta \in \Lambda_{\bold x} \Rightarrow
Z_{\bar\eta} = \emptyset$.  We say canonically$^*$ when we omit $z$
and we write $G^-_{\gx}$.

\noindent
5) We say $\gx$ or just $(\bold x,\bar Z)$ where $\bar Z = \langle
Z_{\bar\eta}:\bar\eta \in \Lambda_{\bold x}\rangle$ is $\theta$-well
orderable \when \, for every $\Lambda \subseteq \Lambda_{\bold x}$ of
cardinality $< \theta$ there is $\langle \bar\eta_\alpha:\alpha <
\alpha_*\rangle,\Lambda' \supseteq \Lambda$ witnessing which means:
\mn
\begin{enumerate}
\item[(a)]  $\bar\eta_\alpha \in \Lambda_{\bold x}$ with no
  repetitions
\sn
\item[(b)]  if $\bar\eta \in \Lambda$ then
\sn
\begin{itemize}
\item  $\bar\eta = \bar\eta_\alpha$ for some $\alpha$
\sn
\item  $Z_{\bar\eta} \subseteq \{\bar\eta_\beta:\beta < \alpha\}$
\sn
\item  for some $m_* < \bold k$ and $w \in J_{\bold x,m}$ we have $i
  \in \partial_{m_*} \backslash w \Rightarrow \eta \upharpoonleft
  (m_*,i) \notin \{\bar\eta_\beta \upharpoonleft (m_i,j):m < \bold k,j
  < \partial_m\}$.
\end{itemize}
\end{enumerate}
\end{definition}

\begin{remark}
In Definition \ref{d4} we may like in $G_{\bar\eta}$ to have more
elements from $G_*$.  This can be accomplished by replacing
$x_{\bar\nu},\bar\nu \in \Lambda_{\bold x,< \bold k}$ by
$x_{\bar\nu,t}$ for $t \in T_{m,i}$ when $\bar\nu = \bar\eta
\upharpoonleft (m,i),\bar\eta \in \Lambda_{\bold x}$.

However, we can just as well replace $\partial_\ell$ by
$\partial'_\ell = \gamma \cdot \partial_\ell$ for some non-zero
ordinal $\gamma$ (and $J_\ell$ by $J'_\ell = \{w \subseteq \partial'_\ell$: for
some $u \in J_\ell$ we have $w \subseteq \{\gamma i + \beta:\beta <
\gamma$ and $i \in u\}\}$).  
\end{remark}

\begin{claim}  
\label{d6}
Assume $\gx$ is a simple $R$-construction (see \ref{d4}(1A),(1D)) which is a 
 $(< \theta)$-locally-free (see \ref{d4}(2) respectively) 
and $G = G_{\gx}$ so it is
derived from $\bold x$ and $\bold x$ is $\theta$-free.

\noindent
1) $G$ is a $\theta$-free $R$-module.

\noindent
2) If in addition $(R,+)$, that is $R$ as an additive (so Abelian) group, 
is free \then \, $(G,+),G$ as an Abelian group, is $\theta$-free.

\noindent
3) In part (2) it suffices that $(R,+)$ is a $\theta$-free Abelian
   group.

\noindent
4) In (1),(2),(3) we can replace ``derived" by ``$(< \theta)$-derived".

\noindent
5) Instead assuming ``$\gx$ is \underline{simply} derived" 
we can demand ``$\gx$ is well orderable and almost simple", see
Definition \ref{d4}(1B),(1E).
\end{claim}

\begin{PROOF}{\ref{d6}}  
1) Let $X \subseteq G$ have cardinality $< \theta$.
By the Definition \ref{d4}(1) there are $\Lambda \subseteq
   \Lambda_{\bold x}$ of cardinality $< \theta$ and $\Lambda_*
   \subseteq \Lambda_{\bold x,<\bold k}$ of cardinality $< \theta$ such
   that $X \subseteq \langle \{x_{\bar\eta}:\bar\eta \in \Lambda_*\} \cup
\{G_{\bar\eta}:\bar\eta \in \Lambda\}\rangle_G$, recalling $\{z\} =
Z_{\bar\eta} \subseteq G_{\bar\eta}$ for every $\bar\eta \in
\Lambda_{\bold x}$ so \wilog \, $X = 
\{x_{\bar\eta}:\bar\eta \in \Lambda_*\} \cup
\{Y_{\bar\eta}:\bar\eta \in \Lambda\}$ where $Y_{\bar\eta} \subseteq
G_{\bar\eta},|Y_{\bar\eta}| < \theta$ for $\bar\eta \in \Lambda$ and 
$[m < \bold k \wedge i < \partial_m \Rightarrow \bar\eta \upharpoonleft (m,i)
\in \Lambda_*]$. 
  
As $\bold x$ is $\theta$-free we can find the following objects:
\mn
\begin{enumerate}
\item[$(a)$]  $\langle \bar\eta_\alpha:\alpha < \alpha_*\rangle$ list 
$\Lambda$ 
\sn
\item[$(b)$]  $m_\alpha < \bold k_{\bold x}$ and $w_\alpha \in
J_{\bold x,\alpha}$ for $\alpha < \alpha_*$
\sn
\item[$(c)$]  if $\alpha < \beta$ and $i \in \partial_{\bold
x,m_\beta} \backslash w_\beta$ then $\eta_{\beta,m}(i) \ne
\eta_{\alpha,m}(i)$.
\end{enumerate}
\mn
For $\alpha \le \alpha(*)$, let $G_\alpha = \langle \cup
\{G_{\bar\eta_\beta}:\beta < \alpha\}\rangle_G$ and let
$G_{\alpha(*)+1} = \langle G_{\alpha(*)} \cup \{x_{\bar\eta}:\bar\eta
\in \Lambda_*\}\rangle_G$.

So $\langle G_\alpha:\alpha \le \alpha(*)+1 \rangle$ is an
increasing continuous sequence of sub-modules, $G_0 = 0$ and
$G_{\alpha(*)+1}$ includes $X$.   Also $G_{\alpha(*)+1}/G_{\alpha(*)}$
is free by their choice above.

Lastly, if $\alpha < \alpha(*)$ then
$G_{\alpha +1}/G_\alpha$ is a $\theta$-free $R$-module because
it is isomorphic to $G_{\bar\eta_\alpha}/G_\alpha =
G_{\bar\eta_\alpha}/G_{\bar\eta_\alpha,m_\alpha,w_\alpha}$ which 
is $\theta$-free by Definition \ref{d4}(3)(g)$^+$.  

So clearly we are done.  

\noindent
2),3) Follows.

\noindent
4) Similarly.

\noindent
5) Let $X,\Lambda$ and $\Lambda_*$ be as in the proof of part (1) and
let $\langle \bar\eta_\alpha:\alpha < \alpha_*\rangle$ list $\Lambda$.

Let $\bar\Lambda$ witness the well orderability of $\gx$.
Then (recalling Definition \ref{d4}(1B)) there is a function $h$ such that:
\mn
\begin{enumerate}
\item[$(d)$]  $h:\alpha_* \rightarrow \ell g(\bar\Lambda)$
\sn
\item[$(e)$]  if $\alpha < \alpha_*$ then $\bar\eta_\alpha \in
  \Lambda_{h(\alpha)+1} \backslash \Lambda_{h(\alpha)}$.
\end{enumerate}
\mn
Let
\mn
\begin{enumerate}
\item[$(f)$]  $Z_{\bar\eta_\alpha} \backslash \{z\} \subseteq
\{\nu_\alpha\} \subseteq 
\{\bar\eta_\beta \upharpoonleft (m,i):\bold n < \bold k,i < 
\partial_m$ and $\beta < \alpha\} \subseteq \Lambda_*$.
\end{enumerate}
\mn
Also \wilog \, as in \S1
\mn
\begin{enumerate}
\item[$(g)$]  $h$ is non-decreasing.
\end{enumerate}
\mn
Now as $\Lambda$ is $\theta$-free; as in \S1, looking carefully at
\ref{d4}(1B), \wilog \, $|\Lambda_{\alpha +1} \backslash
\Lambda_\alpha| \le 1$, so \wilog \,
\mn
\begin{enumerate}
\item[$(g)'$]  $h$ is the identity.
\end{enumerate}
\mn
The rest is as before.
\end{PROOF}

\begin{definition}
\label{d14}
1) An Abelian group $H$ is $(\theta_2,\theta_1)-1$-free \when \,: if $X
   \subseteq H,|X| < \theta_2$ then we can find a $\bar G$ such that:
\mn
\begin{enumerate}
\item[$\bullet$]  $\bar G = \langle G_\alpha:
\alpha < \alpha(*)\rangle$ is a sequence of subgroups of $G$,
\sn
\item[$\bullet$]  $G := \sum\limits_{\alpha < \alpha(*)} G_\alpha
\subseteq H$, both $G$ and $H$ include $X$,
\sn
\item[$\bullet$]  $G_\alpha$ is generated by a set of $< \theta_1$
members,
\sn
\item[$\bullet$]  $G = \bigoplus\limits_{\alpha < \alpha(*)} G_\alpha$.
\end{enumerate}
\mn
2) Similarly for $R$-modules. 
\end{definition}

\begin{claim}
\label{d18}
1) If $\bold x$ is a $\bold k$-c.p., $(\theta_2,\theta_1)$-free (see
\ref{a12}) and $\gx$ is a canonical $(R,\bold x)$-construction which is
locally free simply 
derived from $\bold x$ \then \, $G$ is $(\theta_2,\theta_1)-1$-free.

\noindent
2) Similarly for modules.
\end{claim}

\begin{PROOF}{\ref{d18}}
1) By (2) using $R = \bbZ$.

\noindent
2) Let $G = G_{\gx}$ such that $|\Lambda|,|\Lambda_*| < \theta_2$
and let $X \subseteq G$ be of cardinality $< \theta_2$.  Choose
$\Lambda,\Lambda_*$ as in the proof of \ref{d6}(1).  As we are
assuming ``$\bold x$ is $(\theta_2,\theta_1)$-free" and $\Lambda
\subseteq \Lambda_{\bold x}$ has cardinality $< \theta_2$, there is a
sequence $\langle \bar\Lambda,g,\bar h\rangle$ witnessing it, see
\ref{a12}(4)(d) such that $\bar\Lambda = \langle \Lambda_\gamma:\gamma <
\gamma(*)\rangle$ and $\Lambda = \bigcup\limits_{\gamma}
\Lambda_\gamma$.  We define the sequence $\langle G_\gamma:\gamma
\le \gamma(*)+1\rangle$ as follows.

For $\gamma < \gamma(*)$ let $G_\gamma$ be the submodule of $G_{\bold
  x}$ generated by
$\cup\{G^{\perp}_{\bar\eta,g(\gamma),h_\gamma(\bar\eta)}:\bar\eta \in
\Lambda_\gamma\}$.  We may assume that $G_0 = \{0\}$.  For $\gamma =
\gamma(*)$ let $G_\gamma$ be the submodule of $G_{\bold x}$ generated
by $\{x_{\bar\nu}:\bar\nu \in \Lambda$ but for no $\gamma <
\gamma(*),\bar\eta \in \Lambda_\gamma,i \in \partial_{\bold
  x,g(\gamma)} \backslash h_\gamma(\bar\eta)$ do we have $\bar\nu =
\bar\eta \upharpoonleft (g(\gamma),i)\}$.  Finally, for $\gamma =
\gamma(*)+1$ let $G_\gamma$ be $\sum\limits_{\gamma' \le \gamma(*)}
G_\gamma$.  

For every $\gamma \le \gamma(*)+1$ let $G_{<\gamma}$ be the submodule
generated by $\cup\{G_\alpha:\alpha < \gamma\}$.  Notice that the
sequence $\langle G_{< \gamma}:\gamma \le \gamma(*)+1\rangle$ is
increasing and continuous.
It suffices to prove that $G_\gamma \cap G_{<\gamma} = \{0\}$. If not,
then for some $n$ and pairwise distinct
$\bar\eta_0,\dotsc,\bar\eta_{n-1} \in
\Lambda_\gamma,(\sum\limits_{\ell < n}
G^\perp_{\bar\eta_\ell,g(\gamma),h_\gamma(\bar\eta_\ell)}) \cap
G_{<\gamma} \ne \{0\}$, see \ref{d4}(2).

If $0 \ne x \in (\sum\limits_{\ell \le n}
G^\perp_{\bar\eta_\ell,g(\gamma),h_\gamma(\bar\eta_\ell)}) \cap
G_{<\gamma}$ then there are $x_\ell \in
G^\perp_{\bar\eta_0,g(\gamma),h_\gamma(\bar\eta_\ell)}$ for $\ell < n$
such that $x = \sum\limits_{\ell < n} x_\ell$.  Recalling
``$G_{\gx}/G_* = \oplus\{G_{\bar\eta}/(G_{\bar\eta} \cap G_*):\bar\eta
\in \Lambda_{\bold x}\}$ necessarily $x \in G_*$; moreover recalling
\ref{d4}(1)(c) for each $\ell < n$ we have 
$x_\ell \in G_*$ so $x_\ell \in
G^\perp_{\bar\eta,g(\gamma),h_\gamma(\bar\eta_\gamma)} \cap G_*$ which
  is $\subseteq \oplus\{x_{\eta_\ell \rest (m,i)}:m = g(\gamma)$ and 
$i \in \partial_{\bold x,m} \backslash h_\gamma(\bar\eta_\ell)\} \oplus
  Rz$, see \ref{d4}(2).

Hence $x = \sum\limits_{\ell < n} x_\ell \in H_1 := \oplus\{x_{\bar\eta_\ell
 \rest (m,i)}:\ell < n,m = g(\gamma),i \in \bigcup\limits_{\ell_1 < n}
h_\gamma(\eta_{\ell_1})\}$.  
By the choice of $(\bar\Lambda,g,\bar h),H_2 := G_{< \gamma} \cap G_* \subseteq
\oplus\{R x_{\bar\nu}$: for some $\alpha < \gamma,\bar\eta \in 
\Lambda_\alpha,m = g(\alpha),i < \partial_{\bold x,g(\alpha)},i \notin
h_\alpha(\bar\eta),\bar\nu = \bar\eta \upharpoonleft (m,i)\}$.  Hence
$x \in H_1 \cap H_2 = \{0\}$, contradiction.
\end{PROOF}

\begin{claim}
\label{d21}
Assume $\bold x$ is an $(\aleph_0,\bold k)$-c.p. with $(\aleph_0,\bold
k)$-BB.

\noindent
1) There is canonical $\bbZ$-construction $\gx$ such that:
\mn
\begin{enumerate}
\item[$(a)$]  $G = G_{\gx}$ so $G$ is an Abelian group of cardinality
 $|\Lambda_{\bold x}|$
\sn
\item[$(b)$]  $G$ is  not Whitehead
\sn
\item[$(c)$]  $G$ is $\theta$-free if $\bold x$ is $\theta$-free
\sn
\item[$(d)$]  $G$ is $(\theta_2,\theta_1)-1$-free if $\bold x$ is
  $(\theta_2,\theta_1)$-free, see \ref{d14}(1)
\sn
\item[$(e)$]  $G$ has a $\bbZ$-adic dense subgroup of cardinality
  $|\Lambda_{\bold x,< \bold k}|$.
\end{enumerate}
\mn
2) We can add:
\mn
\begin{enumerate}
\item[$(b)^+$]  $\Hom(G,\bbZ) = 0$.
\end{enumerate}
\end{claim}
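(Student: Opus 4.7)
The plan is to construct $G = G_{\gx}$ via the derived construction of Definition~\ref{d2}, choosing the coefficients $a_{\bar\eta,n}\in\bbZ$ (and, for part~(2), the elements $z_{\bar\eta}\in G_*$) with the help of the $(\aleph_0,\bold k,1)$-black box $\bar\alpha$ hypothesized for $\bold x$. For part~(1) I take the simple derivation $z_{\bar\eta}=z$. The bookkeeping clauses (a), (e) follow from counting generators, and (f) from unfolding the defining relation: $y_{\bar\eta,0} = n!\,y_{\bar\eta,n} + \sum_{k<n} k!\,A_{\bar\eta,k}$ with $A_{\bar\eta,k}\in G_*$, so $G \subseteq G_* + n!G$ for every $n$ and $G_*$ is a $\bbZ$-adically dense subgroup of cardinality $|\Lambda_{\bold x,<\bold k}|$. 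For (c) and (d) I verify that $\gx$ is a well-orderable, locally free $\bbZ$-construction in the sense of Definition~\ref{d4}: the filtration witnessing well-orderability is induced from any $\theta$-free enumeration of $\Lambda_{\bold x}$ (noting $Z_{\bar\eta}=\{z\}$ is in every earlier stage), and local freeness holds because for each $\bar\eta$, $m$, and cofinite $w\subseteq\omega$, the quotient $G_{\bar\eta}/G_{\bar\eta,m,w}$ is cyclic, generated by the image of $y_{\bar\eta,n_0}$ for $n_0 = \min(\omega\setminus w)+1$, hence free. Claim~\ref{d6} then delivers (c) and Claim~\ref{d18} delivers (d).

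For the non-Whitehead property (b) I exhibit a non-split extension $0\to\bbZ \to B \overset{\pi}{\to} G \to 0$. Let $B$ be generated by lifts $\tilde x_{\bar\nu},\tilde z,\tilde y_{\bar\eta,n}$ of the generators of $G$ together with a generator $\mathbf 1$ of $\ker\pi$, modulo the perturbed relations $(n+1)\tilde y_{\bar\eta,n+1} = \tilde y_{\bar\eta,n} - \sum_m \tilde x_{\bar\eta\upharpoonleft(m,n)} - a_{\bar\eta,n}\tilde z + c_{\bar\eta,n}\cdot\mathbf 1$. Any section $s$ of $\pi$ is determined by integer corrections $\sigma(\bar\nu) := s(x_{\bar\nu})-\tilde x_{\bar\nu}\in\bbZ$ and $\tau := s(z)-\tilde z\in\bbZ$, and consistency with the relations of $G$ forces, for each $\bar\eta$, the $\hat\bbZ$-limit
$\sum_{k<\omega} k!\bigl(\sum_m \sigma(\bar\eta\upharpoonleft(m,k)) + a_{\bar\eta,k}\tau - c_{\bar\eta,k}\bigr)$
to lie in $\bbZ$. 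Apply the BB to the integer-coded coloring $h_m(\bar\nu) = \langle \sigma(\bar\nu),\tau\rangle$ to obtain $\bar\eta^*$ whose predictions $\alpha_{\bar\eta^*,m,i}$ recover $(\sigma,\tau)$ on the neighborhood of $\bar\eta^*$; having chosen $\langle c_{\bar\eta^*,k}:k<\omega\rangle$ at the outset, as a function of $\bar\alpha_{\bar\eta^*}$, so that the corresponding $\hat\bbZ$-limit is forced out of $\bbZ$, we contradict the existence of $s$.

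For part~(2), $\Hom(G,\bbZ)=0$, the same BB-catching is applied directly to a hypothetical $h:G\to\bbZ$ with $(h\rest X, h(z))$ playing the role of $(\sigma,\tau)$; the $\hat\bbZ$-limit of $\sum_{k} k!\,h(A_{\bar\eta,k})$ must now be driven out of $\bbZ$ by the choice of $a_{\bar\eta,k}$ rather than by correction terms. The principal difficulty is the case $h(z)=0$ while $h\rest X\neq 0$: under the simple derivation, the factor $a_{\bar\eta,k}\cdot h(z)$ vanishes and the lever is lost. I overcome this by abandoning simplicity and setting $z_{\bar\eta} := z + \sum_{(m,i)\in F_{\bar\eta}} x_{\bar\eta\upharpoonleft(m,i)}$ for a finite set $F_{\bar\eta}\subseteq \bold k\times\omega$ tied to the BB prediction at $\bar\eta$; whenever $h\neq 0$ and BB catches $h$ at $\bar\eta^*$ with a nonzero prediction on $F_{\bar\eta^*}$, we have $h(z_{\bar\eta^*})\neq 0$ and recover the leverage on $a_{\bar\eta^*,k}$. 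What I expect to be the main technical obstacle is arranging $F_{\bar\eta}$ so that simultaneously (i) the resulting $\gx$ remains well-orderable --- the $x_{\bar\eta\upharpoonleft(m,i)}$ occurring in $z_{\bar\eta}$ must lie in the span of strictly earlier stages, forcing $F_{\bar\eta}$ into the $J$-small set of ``shared'' indices from Definition~\ref{d4}(1B)(c) --- and (ii) every nonzero $h$ is caught at some $\bar\eta^*$ with the induced $h(z_{\bar\eta^*})\neq 0$, which requires exploiting the fact that the BB (not merely pre-BB) supplies, by the partition in \ref{a9}(2A), many independent catches per coloring, so that one can range over an auxiliary parameter encoding which coordinate of the support of $h$ to engage.
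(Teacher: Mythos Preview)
Your argument for part~(1) is essentially the paper's, repackaged: where the paper shows that $G/\bbZ z$ is not Whitehead via the internal extension $0\to\bbZ z\to G\to G/\bbZ z\to 0$ (using the BB-chosen $a_{\bar\eta,n}$), you show $G$ itself is not Whitehead via an external extension $B$ (using BB-chosen $c_{\bar\eta,n}$). Both work. One small slip: the quotient $G_{\bar\eta}/G_{\bar\eta,m,w}$ is not cyclic. After eliminating the $x$'s via the relations, it is presented on generators $\{y_{\bar\eta,n}:n<\omega\}$ subject only to $(n{+}1)y_{\bar\eta,n+1}=y_{\bar\eta,n}$ for $n\in w$; this is free abelian of countable rank (a basis is $\{y_{\bar\eta,n}:n\ge n_0\}$ with $n_0>\max w$), which is all you need for Claim~\ref{d6}.

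For part~(2), however, your proposed fix does not close the gap. With $z_{\bar\eta}=z+\sum_{(m,i)\in F_{\bar\eta}}x_{\bar\eta\upharpoonleft(m,i)}$ you are restricted to the $x$'s \emph{local} to $\bar\eta$. The black box guarantees, for each coloring and each piece $\Lambda_\alpha$ of the partition, a catch $\bar\eta^*_\alpha$ whose local predictions agree with the coloring --- but it gives no control over \emph{which} $\bar\eta^*_\alpha$ you get, and in particular no guarantee that the support of $h$ meets $\{\bar\eta^*_\alpha\upharpoonleft(m,i):m,i\}$ for any $\alpha$. If $h(z)=0$ and $h(x_{\bar\nu_0})\ne 0$ for a single $\bar\nu_0$, every catch may have all-zero local values, and then $h(z_{\bar\eta^*_\alpha})=0$ for every choice of $F_{\bar\eta^*_\alpha}$. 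Your well-orderability constraint (i) makes this worse: the shared-index set from \ref{d4}(1B)(c) can be empty, forcing $F_{\bar\eta}=\emptyset$ for many $\bar\eta$. So the tension you flag between (i) and (ii) is not merely a technicality to be arranged; as stated, the scheme cannot catch all nonzero $h$.

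The paper avoids this entirely. It does not try to build a single $G$ with $\Hom(G,\bbZ)=0$ directly from the BB; instead it takes the group from part~(1) --- for which the conclusion is only that no $h$ has $h(z)\ne 0$ --- and then \emph{amalgamates} countably many copies over pure rank-one subgroups, arranging that every indivisible element of the result plays the role of $z$ in some copy (this is the tree construction spelled out in the proof of \ref{d24}(3), to which the paper refers). Any nonzero $h$ on the amalgam is nonzero on some indivisible $y$; restricting to the copy in which $y$ serves as $z$ contradicts part~(1). This sidesteps both your obstacles: no delicate choice of $z_{\bar\eta}$ is needed, and well-orderability is inherited from the simple construction of part~(1) in each copy.
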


\begin{remark}
\label{d22}
Recall that ``$\bold b$ is a $(\chi,\bold k)$-BB" 
means $\bold b$ is a function with range $\subseteq \chi$, see
Definition \ref{a9}.
\end{remark}

\begin{PROOF}{\ref{d21}} 
1) Let $G_0 = \oplus\{\bbZ x_{\bar\eta}:\bar\eta \in 
\Lambda_{\bold x,< \bold k}\} \oplus \bbZ z$ 
and $G_1$ be the $\bbZ$-adic closure of $G_0$ so 
$G_1$ is a complete metric space under the $\bbZ$-adic metric.

For $\bar\eta \in \Lambda_{\bold x}$ and $\bar a \in {}^\omega \bbZ$
and $n(*)$, in $G_1$ we let $y_{\bar a,\bar\eta,n(*)} = \sum\limits_{n \ge
  n(*)}(n!/n(*)!)(\sum\limits_{m < \bold k} 
x_{\bar\eta \upharpoonleft (m,n)} - \sum\limits_{m < \bold k} \bold
b(\eta,m,n) z + a_n z)$.

Let $\{b_i:i < \omega\}$ list the elements of $\bbZ$ and let 
$\bar{\bold c} = \langle \bold c_{\bar\eta}:
\bar\eta \in \Lambda_{\bold x}\rangle$ be 
an $(\aleph_0,\bold k)$-BB with $\bold c_{\bar\eta}$ a function from
$\{\bar\eta \upharpoonleft (m,n):m < \bold k,n < \omega\}$ to $\bbZ$.
Now for each
$\bar\eta \in \Lambda_{\bold x}$ let $G^0_{\bar\eta} = \Sigma\{\bbZ
x_{\bar\eta \upharpoonleft (m,n)}:m < \bold k,n < \omega\} \oplus \bbZ
z$ and $h_{\bar\eta} \in \Hom(G^0_{\bar\eta},\bbZ,z)$ be such that
$h_{\bar\eta}(z)=z,h_{\bar\eta}(x_{\bar\eta \upharpoonleft (m,n)}) = 
b_{\bold c_{\bar\eta}(\bar\eta \upharpoonleft (m,n))} z$
\mn
\begin{enumerate}
\item[$(*)_1$]  We can choose $\bar a = \bar a[\bar\eta] \in 
{}^\omega \bbZ$ such that there is no extension $h^1 \in \Hom(G^1_{\bar a,\bar\eta},\bbZ)$ of
$h_{\bar\eta}$ where $G^1_{\bar a,\bar\eta} = \langle G^0_{\bar\eta}
\cup \{y_{\bar a,\bar\eta,n}:n < \omega\}\rangle_{G_1}$.
\end{enumerate}
\mn
[Why?  Well known but we elaborate.  It suffices to prove that $\cA =
\{\bar a \in {}^\omega 2:h_{\bar\eta}$ has an extension in
$\Hom(G^1_{\bar a,\bar\eta},\bbZ)$ and $a_0 = 0 = a_1\}$ is a countable 
subset of ${}^\omega \bbZ$, we could have allowed $\bar a \in
{}^\omega \bbZ$ but this seems more transparent to restrict ourselves.  
For $\bar a \in \cA$ let $h_{\bar a,\bar\eta}$ be an extension witnessing it.

Now
\mn
\begin{enumerate}
\item[$\bullet$]  For each $b \in \bbZ$ the set $\cA_b = 
\{\bar a \in \cA \subseteq {}^\omega 2:
h_{\bar a,\bar\eta}(y_{\bar\eta,0}) = b$ and so
$a_0 = a_1 = 0\}$ has at most one member.
\end{enumerate}
\mn
[Why?  Toward contradiction assume $\bar a_1 \ne \bar a_2 \in \cA_b$ and
let $n$ be minimal such that $a_{1,n} \ne a_{2,n}$, now
$n=0,1$ is impossible as $\bar a_1,\bar a_2 \in \cA_b$, so $n \ge 2$.
Now prove by induction on $\ell < n$ that 
$h_{\bar a_1,\bar\eta}(y_{\bar\eta,\ell}) = 
h_{\bar a_2,\eta}(y_{\bar\eta,\ell})$; for $\ell=0,1$ use $\bar a_1,\bar a_2
\in \cA_b$ and for $\ell = j+1$ recall $\ell y_{\eta,\ell} = y_{\eta,j}
- (\sum\limits_{m < \bold k} x_{\bar\eta \upharpoonleft(m,j)} +
a_{\iota,j} z)$ for $\iota = 1,2$; apply $h_{\bar a_\iota,\bar\eta}$ and use
the induction hypothesis.  Now on this equation for $\ell=n,\iota =
1,2$ apply $h_{\bar a_\iota,\bar\eta}$ and substracting then we get
$a_{1,n} - a_{2,n}$ is divisible by $\ell$ and $\ell \ge 2$ but $a_{1,n} -
a_{2,n} \in \{1,-1\}$, contradiction.]

So clearly there is $\bar a \in {}^\omega 2 \backslash \cup\{\cA_b:b \in
\bbZ\}$ such that $a_0 = 0 = a_1$, it is as required.  So $(*)_1$ 
holds indeed.]

Lastly,
\mn
\begin{enumerate}
\item[$(*)_2$]  let $G_1 = \langle G_0 \cup \{y_{\bar a[\bar\eta],
\bar\eta,n}:n < \omega\}:\bar\eta \in \Lambda_{\bold
x}\}\rangle_{G_1}$.
\end{enumerate}
\mn
Now $G_1$ witnesses that $G_2 = G_1/\bbZ z$ is not a Whitehead group.
[Why?  Let $G_2 = G_1/\bbZ z$ and let $h_*$ be the canonical
homomorphism from $G_1$ onto $G_1/\bbZ z$, i.e. $h_*(x) = x + \bbZ z$
for $x \in G_1$.  Toward contradiction assume $G_2$ is a Whitehead
group, this means that there is a homomorphism $g_*$ from $G_2$ into
$G_1$ inverting $h_*$, that is, $y \in G_2 \Rightarrow h_*(g_*(y)) = y$.

As $\Ker(g_*) = \bbZ z$ clearly $x \in G_1 \Rightarrow g_*(h_*(x))-x
\in \bbZ z$ so let $h_\ell$ be the unique function from
$\Lambda_{\bold x,< \bold k}$ into $\bbZ$ defined by $h_0(\bar\nu) =
b$ iff $(\bar\nu \in \Lambda_{\bold x,< \bold k},k \in \bbZ$ and
$g_*(h_*(x_{\bar\nu})) - x_{\bar\nu} = b z$.  By the choice of $\bold
b$ there is $\bar\eta \in \Lambda_{\bold x}$ such that $m < \bold k
\wedge n < \omega \Rightarrow \bold k(\bar\eta,m,n) =
h_\bullet(\bar\eta \upharpoonleft (m,n))$.  So $x \mapsto x -
g_*(h_*(x))$ define a homomorphism from $G_{\bar
  a(\bar\eta),\bar\eta}$ onto $\bbZ z$ mapping $z$ to itself and
mapping $x_{\bar\eta \upharpoonleft (m,n)}$ to $b_{\bold
  c_\eta(\bar\eta \upharpoonleft (m,n))} z$, contradicting to the
choice of $\bar a(\eta)$.  So $G_2 = G_1/\bbZ z$ is Whitehead indeed.

Now clearly
for some canonical$^* \, \bbZ$-construction $\gx,G_{\gx} = G^-_{\gx} \oplus
\bbZ z$, and easily $G_2 \cong G^-_{\gx}$ and $G_2$ is a direct
summand of $G_{\gx}$ so (by the well known group theory) 
also $G_{\gx}$ is not a Whitehead group.  The
cardinality and freeness demands are obvious.

\noindent
2) For transparency we ignore the ``Whitehead".  Recall we assume $\bold
x$ has the $\aleph_0$-black box not just the $\aleph_0$-pre-black box
(see \ref{a9}(1),(4)).

Let $\langle \Lambda_\alpha:\alpha < |\Lambda_{\bold
  x}|\rangle,\langle \bar\nu_\alpha:\alpha < \alpha_*\rangle$ be as in
Definition \ref{a9}(4).  Let $\langle h_{\bar\eta}:\bar\eta \in
  \Lambda_\alpha\rangle$ be an $\aleph_0$-BB.  We choose 
$(\bbZ,\bold x)$-construction $\gx$ by choosing $(z_{\bar\eta},
\bar a_{\bar\eta})$ for
  $\bar\eta \in \Lambda_\alpha$ by induction on $\alpha$ such that:
\mn
\begin{enumerate}
\item[$\bullet_1$]  $z_{\bar\eta} = z_0 = z$ if $\alpha=0$
  (alternatively, omit $z$)
\sn
\item[$\bullet_2$]  $z_{\bar\eta} = z_\alpha = x_{\bar\nu_\alpha}$ 
if $\bar\eta \in \Lambda_{1 + \alpha}$
\sn
\item[$\bullet_3$]  $\bar a_{\bar\eta}$ for $\bar\eta \in
  \Lambda_\alpha$ is chosen such that: there is no homomorphism $h$
  from $G_{\bar\eta}$ into $\bbZ$ such that $(h(x_{\bar\eta
    \upharpoonleft (m,i)}),h(z_\alpha))$ is coded by
  $h_{\bar\eta}(\bar\eta \upharpoonleft (m,i))$.
\end{enumerate}
\mn
So if $h \in \Hom(G_{\gx},\bbZ)$ then $\alpha < \alpha_* \Rightarrow
h(z_\alpha) = 0$ but $\oplus \sum\limits_{\alpha} \bbZ z_\alpha = G_0$ so $h
\rest G_0$ is zero but $G_{\gx}/G_0$ is divisible hence $h$ is zero.

Alternatively omitting ``$G = G_{\gx}$", this follows easily by 
repeated amalgamation of the $G$ constructed in part (1) over pure 
subgroups isomorphic to $\bbZ$, see the proof of \ref{d24}(3) or, 
e.g. \cite[\S3]{Sh:1005}.
\end{PROOF}
\bigskip

\centerline {$* \qquad * \qquad *$}
\bigskip

Now claim \ref{d21} as stated is enough when we use \S1 to get
$\aleph_{\omega \cdot n}$-free $\bold x$ with $\chi$-BB, 
(see \ref{a52}(1),(2)) but
not for $\aleph_{\omega_1 \cdot n}$-free, because there we need
for $\partial = \aleph_1,J = J^{\bd}_\kappa \times J^{\bd}_\sigma,\sigma <
\kappa$ regular in particular $(\sigma,\kappa) = (\aleph_0,\aleph_1)$.
So we better use the
construction from Definition \ref{d4} rather than \ref{d2}.  Also we
prefer to have general $R$-modules and we formalize the relevant
property of $R,\bar\partial,\bar J,\theta$.  We use ${}_R R$ to denote
$R$ as a left $R$-module.

\begin{definition}
\label{d23}
1) We say that $(\bar\partial,\bar J)$ does $\theta$-fit $R$ \underline{or}
the triple $(\bar\partial,\bar J,\theta)$-fit $R$ (but if $\bar\partial =
\bar\partial_{\bold x},\bar J = \bar J_{\bold x}$ then we may write
$\bold x$ instead of $(\bar\partial,\bar J)$) \when \,:
\mn
\begin{enumerate}
\item[(A)]  
\begin{enumerate}
\item[(a)]  $R$ is a ring
\sn
\item[(b)]  $\bold k$ is a natural number $\ge 1$
\sn
\item[(c)]  $\bar\partial = \langle \partial_\ell:\ell <  \bold k\rangle$
\sn
\item[(d)]  $\partial_\ell$ is a regular cardinal
\sn
\item[(e)]  $\bar J = \langle J_\ell:\ell < \bold k\rangle$
\sn
\item[(f)]  $J_\ell$ is an ideal on $\partial_\ell$.
\end{enumerate}
\sn
\item[(B)]  If $G_0 = \oplus\{Rx_{m,i}:m < \bold k,i < \partial_m\} 
\oplus Rz$ and $h \in \Hom(G_0,{}_R R)$ and $h(z) \ne 0$ \then \,
there is $G_1$ such that
\sn
\begin{enumerate}
\item[$(*)$]
\begin{enumerate}
\item[$(\alpha)$]  $G_1$ is an $R$-module extending $G_0$
\sn
\item[$(\beta)$]  $G_1$ has cardinality $< \theta$
\sn
\item[$(\gamma)$]   there is no homomorphism from $G_1$ to
${}_R R$ (i.e. $R$ as a left $R$-module) extending $h$.
\end{enumerate}
\end{enumerate}
\end{enumerate}
\mn
1A) We replace ``fit" by ``weakly fit" when in clause (B) we further
demand on $h,h(x_{m,2i}) = h(x_{m,2i+1})$.

\noindent
2) We say $(\bar\partial,\bar J)$ freely $\theta$-fits $R$ or
 $(\bar\partial,\bar J,\theta)$-fit $R$ (but if $\bar\partial =
 \bar\partial_{\bold x},\bar J = \bar J_{\bold x}$ we may replace
 $(\bar\partial,\bar J)$ by $\bold x$) \when \,:
\mn
\begin{enumerate}
\item[$(A)$]  $(a)-(f)$ as above
\sn
\item[$(B)$]   as above adding
\sn
\begin{enumerate}
\item[${{}}$]  $(\delta) \quad$ if $m_* < \bold k \wedge w \in J_{m_*}$ 
\then \, $G_1$ is free over 

\hskip25pt  $\oplus\{Rx_{m,i}:m < \bold k,
i < \partial_m$ and $m=m_* \Rightarrow i \in w\} \oplus Rz$.
\end{enumerate}
\end{enumerate}
\mn
3) In part (1) above and also parts (4)-(6) below we may 
write $(\partial,J,\bold k)$ instead of 
$(\langle \partial_\ell:\ell < \bold k\rangle,\langle
J_\ell:\ell < \bold k\rangle)$ when
$\ell < \bold k \Rightarrow \partial_\ell = \partial \wedge J_\ell =J$.  
Also we may write $J$ if $m < \bold k \Rightarrow J_m = J$ and
omit $\bar J$ when $\ell < \bold k \Rightarrow J_\ell =
J^{\bd}_{\partial_\ell}$.

\noindent
4) We may above replace ``$J_\ell$ is an ideal on $\partial_\ell$" by
$J_\ell \subseteq \cP(\partial_\ell)$.

\noindent
5) We may omit $\theta$ when $\theta = |R|^+ + \max\{\partial^+_m:
m < \bold k\}$.

\noindent
6) We replace fit by $\bbI$-fit when:
\mn
\begin{enumerate}
\item[$(a)$]  $\bbI$ is a set of ideals of $R$ closed under
  intersection of two including $I_0 = \{0_R\}$
\sn
\item[$(b)$]  replace $R z$ by $(R/I)z,I \in \bbI$.  The default value
of $\bbI$ is $\{\{a:ab=0\}:b \in R\}$
\sn
\item[$(c)$]  in $(B)(*)$, if $x \in G_1 \backslash \{0\}$ then
  $\ann(x,G_1) = \{a \in R:ax=0\} \in \bbI$.
\end{enumerate}
\end{definition}

\begin{claim}
\label{d24}
1) Assume $\bold x$ is a $\bold k$-c.p., $R$ is a ring, $\bold x$ does
   $\theta$-fit $R,\chi^+ \ge \theta + |R|^+$ and $\bold x$
 has $(\chi,\bold k,1)$-BB.

There is $\gx$ such that:
\mn
\begin{enumerate}
\item[$(a)$]  $\gx$ is an $(R,\bold x)$-construction 
\sn
\item[$(b)$]  $G = G_{\gx}$ is an $R$-module of cardinality
  $|\Lambda_{\bold x}|$
\sn
\item[$(c)$]  there is no $h \in \Hom(G,{}_R R)$ such that $h(z) \ne 0$
\sn
\item[$(d)$]  $\gx$ is simple, that is, 
$z_{\bar\eta} = z$ for $\bar\eta \in \Lambda_{\bold x}$.
\end{enumerate}
\mn
2) If in addition $\bold x$ freely $\theta$-fit $R$, \then \, we can add:
\mn
\begin{enumerate}
\item[$(e)$]  $G$ is $\sigma$-free if $\bold x$ is $\sigma$-free
  (holds always for $\sigma = \min(\bar\partial_{\bold x}))$
\sn
\item[$(f)$]  $G$ is $(\theta_2,\theta_1)-1$-free if $\bold x$ is
  $(\theta_2,\theta_1)$-free.
\end{enumerate}
\mn
3) In (2) we can add:
\mn
\begin{enumerate}
\item[$(g)$]  $\Hom(G,{}_R R) = 0$.
\end{enumerate}
\mn
4) We can use above ``weakly fit".
\end{claim}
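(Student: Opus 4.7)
The plan is to adapt the construction from the proof of Claim \ref{d21}(1), replacing the $\bbZ$-adic closure used there by the generic blocking extensions provided by the $\theta$-fit hypothesis of Definition \ref{d23}. Let $G_{*}:=\bigoplus\{Rx_{\bar\nu}:\bar\nu\in\Lambda_{\bold x,<\bold k}\}\oplus Rz$ and, for $\bar\eta\in\Lambda_{\bold x}$, let $G^{0}_{\bar\eta}$ be its free submodule generated by $\{x_{\bar\eta\upharpoonleft(m,i)}:m<\bold k,\,i<\partial_{m}\}\cup\{z\}$. Because $\bold x$ carries $(\chi,\bold k,1)$-BB and not merely pre-BB, Definition \ref{a9}(2A) provides a partition $\langle\Lambda_{\alpha}:\alpha<|\Lambda_{\bold x}|\rangle$ of $\Lambda_{\bold x}$ such that each $\bold x\restriction\Lambda_{\alpha}$ carries its own pre-BB $\langle a_{\bar\eta,m,i}:\bar\eta\in\Lambda_{\alpha},m<\bold k,i<\partial_{m}\rangle$ with values in $\chi$. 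I will fix a surjection $c:\chi\twoheadrightarrow R$ together with a section $s:R\to\chi$ (possible since $|R|\le\chi$) and a sequence $\langle r_{\alpha}:\alpha<|\Lambda_{\bold x}|\rangle$ in $R\setminus\{0\}$ in which every nonzero element occurs at least once.

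Next, for each $\bar\eta\in\Lambda_{\alpha}$ I will define a candidate ``bad'' homomorphism $h_{\bar\eta}:G^{0}_{\bar\eta}\to R$ by $h_{\bar\eta}(x_{\bar\eta\upharpoonleft(m,i)}):=c(a_{\bar\eta,m,i})$ and $h_{\bar\eta}(z):=r_{\alpha}\ne 0$. Applying the $\theta$-fit hypothesis to $(G^{0}_{\bar\eta},h_{\bar\eta})$ produces an $R$-module $G_{\bar\eta}\supseteq G^{0}_{\bar\eta}$ of cardinality $<\theta$ admitting no $R$-homomorphism to ${}_{R}R$ extending $h_{\bar\eta}$. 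Let $G=G_{\gx}$ be the $R$-module with $G_{*}\subseteq G$ and $G/G_{*}=\bigoplus_{\bar\eta\in\Lambda_{\bold x}}(G_{\bar\eta}+G_{*})/G_{*}$, with $Z_{\bar\eta}=\{z\}$, as in Definition \ref{d4}(1C); this is a simple $(R,\bold x)$-construction, yielding clauses (a), (b), (d) and the cardinality bound $|G|\le\theta\cdot|\Lambda_{\bold x}|=|\Lambda_{\bold x}|$.

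For clause (c), suppose $h\in\Hom(G,{}_{R}R)$ satisfies $h(z)=r\ne 0$. Define $h_{m}:\Lambda_{\bold x,m}\to\chi$ by $h_{m}(\bar\nu):=s(h(x_{\bar\nu}))$, choose $\alpha$ with $r_{\alpha}=r$, and apply the pre-BB on $\Lambda_{\alpha}$ to obtain $\bar\eta\in\Lambda_{\alpha}$ with $h_{m}(\bar\eta\upharpoonleft(m,i))=a_{\bar\eta,m,i}$ for all $m,i$. Applying $c$ gives $h(x_{\bar\eta\upharpoonleft(m,i)})=c(a_{\bar\eta,m,i})=h_{\bar\eta}(x_{\bar\eta\upharpoonleft(m,i)})$, and $h(z)=r=r_{\alpha}=h_{\bar\eta}(z)$, hence $h\restriction G^{0}_{\bar\eta}=h_{\bar\eta}$, so that $h\restriction G_{\bar\eta}$ extends $h_{\bar\eta}$, contradicting the choice of $G_{\bar\eta}$. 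For part (2), under the stronger ``freely $\theta$-fit'' assumption each $G_{\bar\eta}$ can be chosen free over the truncation $G_{\bar\eta,m,w}$ for every $m<\bold k$ and $w\in J_{m}$; so $\gx$ becomes a simple, well-orderable, locally free $(R,\bold x)$-construction, and clauses (e), (f) follow from Claims \ref{d6}(1) and \ref{d18} respectively. For part (3), iterated pushouts of $G$ over the cyclic submodule $Rz$, as in the proof of \ref{d21}(2) (compare \cite[\S3]{Sh:1005}), yield an extension $G'$ of $G$ with $\Hom(G',{}_{R}R)=0$ that preserves the freeness properties obtained in part (2).

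The main obstacle in this plan is matching the predicted value $h_{\bar\eta}(z)$ with the actual $h(z)$ of an adversarial $h$: a single pre-BB predicts the values $h(x_{\bar\nu})\in\chi$ but gives no handle on $h(z)$, so it would force a commitment to a fixed element of $R$ in advance. Invoking the genuine BB, whose partition of $\Lambda_{\bold x}$ into $|\Lambda_{\bold x}|$ pieces each carries its own pre-BB, resolves this by letting us distribute the possible values of $h(z)$ across the pieces and catch $r=h(z)$ in whichever piece was tagged with $r_{\alpha}=r$.
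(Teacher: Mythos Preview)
Your proposal is correct but handles the value $h(z)$ in part~(1) differently from the paper. The paper does \emph{not} use the BB partition for this: it lists $R\times R$ as $\{(a^1_\varepsilon,a^2_\varepsilon):\varepsilon<\chi\}$ (possible since $|R|\le\chi$), takes a single pre-black-box $\bold b$ with values in $\chi$, and sets $h_{\bar\eta}(x_{\bar\eta\upharpoonleft(m,i)})=a^1_{\bold b_{\bar\eta}(m,i)}$ and $h_{\bar\eta}(z)=a^2_{\bold b_{\bar\eta}(0,0)}$. Given an adversarial $h$ with $h(z)\ne 0$, one defines $g(\bar\nu)=\min\{\varepsilon:(h(x_{\bar\nu}),h(z))=(a^1_\varepsilon,a^2_\varepsilon)\}$ and the matched $\bar\eta$ recovers both $h(x_{\bar\eta\upharpoonleft(m,i)})$ and $h(z)$ simultaneously. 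So your closing diagnosis that ``a single pre-BB \ldots\ gives no handle on $h(z)$'' is mistaken --- the pairing trick handles it with only a pre-BB --- though your alternative of distributing the possible values of $h(z)$ across the pieces of the BB partition is also valid and makes honest use of the full BB hypothesis actually stated in the claim. The paper's route is a bit more economical; yours is arguably more transparent about where the extra strength of BB over pre-BB would matter.

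For parts~(2) and~(3) your outline matches the paper. In part~(3) the paper spells out the amalgamation explicitly: with $Y=\{y\in G:G/Ry$ is sufficiently free$\}$ (which generates $G$ by part~(2)), it takes copies $G_\varrho\cong G$ for $\varrho\in{}^{\omega>}Y$, forms their direct sum, and quotients by the relations identifying $h_{\varrho\char94\langle y\rangle}(z)$ with $h_\varrho(y)$. This is the ``repeated amalgamation over pure cyclic subgroups'' you cite from \ref{d21}(2); note that the identification is of the $z$ in one copy with an arbitrary $y\in Y$ (not $z$) in its predecessor, which is what forces every such $y$ to lie in the kernel of any homomorphism to ${}_RR$.
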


\begin{PROOF}{\ref{d24}}
Let $G_* = \oplus\{R x_{\bar\eta}:\bar\eta \in \Lambda_{\bold x,<
  \bold k}\} \oplus R z$.

\noindent
1) Let $\{(a^1_\varepsilon,a^2_\varepsilon):\varepsilon < \chi\}$
   list, possibly with repetitions the members of $R \times 
(R \backslash \{0_R\})$ and let 
$\bold b$ be a $(\chi,\bold k,1)$-BB for $\bold x$ 
and let $\bold b',\bold b''$ 
be defined such that: $\varepsilon = \bold b_{\bar\eta}(m,i)$
implies $\bold b'_{\bar\eta}(m,i) = a^1_\varepsilon,\bold
   b''_{\bar\eta}(m,i) := a^2_\varepsilon$.

For $\bar\eta \in \Lambda_{\bold x}$ let $G^0_{\bar\eta} = \Sigma\{R
x_{\bar\eta \upharpoonleft (m,i)}:m < \bold k,i < \partial_m\}
\oplus R z \subseteq G_*$ and let $h_{\bar\eta}$ be the 
unique homomorphism from $G^0_{\bar\eta}$
  into ${}_R R$ satisfying $h_{\bar\eta}(x_{\bar\eta \upharpoonleft
    (m,i)}) = \bold b'_{\bar\eta}(m,i)$ and $h_{\bar\eta}(z) =
  \bold b''_{\bar\eta}(0,0)$ and let $G^1_{\bar\eta}$ be an $R$-module
  extending $G^0_{\bar\eta}$ such that
  $(G^1_{\bar\eta},G^0_{\bar\eta},h_{\bar\eta})$ here are like
  $(G_1,G_0,h)$ in Definition \ref{d23}(1)(B)$(*)$, so in particular
there is no homomorphism from
  $G^1_{\bar\eta}$ into ${}_R R$ extending $h_{\bar\eta}$.  \Wilog
  \, $G^1_{\bar\eta} \cap G_0 = G^0_{\bar\eta}$ and $\langle
  G^1_{\bar\eta} \backslash G^0_{\bar\eta}:\bar\eta \in \Lambda_{\bold
    x}\rangle$ is a sequence of pairwise disjoint sets.  Let $G$ be
  the $R$-module generated by $\cup\{G^1_{\bar\eta}:\bar\eta \in
  \Lambda_{\bold x}\} \cup G_0$ extending each $G^1_{\bar\eta}$ and
$G_*$, freely except this.  Clearly we have defined an $R$-construction
  $\gx$ with $\bold x_{\gx} = \bold x,G_{\gx} = G,z_{\gx,\bar\eta} =
  \{z\}$ and clauses $(a),(b),(d)$ of the desired conclusion hold.
  To prove clause (c) toward contradiction assume that $h \in
  \Hom(G,{}_R R)$ satisfies $h(z) \ne 0$.  Let $g:\Lambda_{\bold x,< \bold
    k} \rightarrow \chi$ be defined by $g(\bar\nu) = \min\{\varepsilon
< \chi:(h(x_{\bar\nu}),h(z)) = (a^1_\varepsilon,a^2_\varepsilon)\}$.

Clearly the function is well defined hence as $\bold x$ has
$(\chi,\bold k,1)$-BB, that is by the choice of $\bold b$ there is
$\bar\eta \in \Lambda_{\bold x}$ such that $m < \bold k \wedge i <
\partial_m \Rightarrow g(\bar\eta \upharpoonleft (m,i)) = \bold
b_{\bar\eta}(m,i)$.  We get easy contradiction.

What about the cardinality $|G|$?  
Note that $|G^1_{\bar\eta}| < \theta$ and $\theta \le \chi^+$.

\noindent
2) In the proof of part (1), choosing $G^1_{\bar\eta}$ we add the parallel
of clause $(*)(\delta)$ of \ref{d23}(B).  Now clause (e) of \ref{d24}(2) holds 
by \ref{d6}(1) and clause (f) by \ref{d18}(2).

\noindent
3) Let $G$ be as constructed in part (1), and let $Y = \{y \in G:G/Ry$
   is $\aleph_1$-free or even $\min(\bar\partial)^+$-free$\}$ (recall
\ref{d6} + freeness of $\bold x$).

So by part (2) the set $Y$ generates $G$, let 
$\langle G_\varrho,h_\rho:\varrho \in
{}^{\omega >}Y \rangle$ be such that $G_\varrho$ is an $R$-module,
$h_\varrho$ is an isomorphism from $G$ onto $G_\varrho$, \wilog \,
$0_{G_\varrho}=0$ for every $\varrho$ and 
$G_{\varrho_1} \cap G_{\varrho_2} = \{0\}$ for $\varrho_1 \ne
\varrho_2$.

Let $H_1 = \oplus\{G_\varrho:\varrho \in {}^{\omega >}Y\}$ and let $H_0$ be the
$R$-submodule of $H_1$ generated by

\[
X = \{h_{\varrho \char 94 \langle y \rangle}(z) - h_\varrho(y):\varrho
\in {}^{\omega >}Y \text{ and } y \in Y\}.
\]

\mn
That is, toward contradiction $f_g \in \Hom(H,{}_R R)$ is not zero and
$f_1 \in \Hom(H_1,{}_R R)$ be defined by
$f_1(x) = h(x+H_0)$, so also $f_1$ is not zero but $x \in X
\Rightarrow f_1(x) = 0$.  By the choice of $H_1$, there is $\varrho
\in {}^{\omega >} Y$ such that $f_1 \rest G_\varrho$ is not zero.  But
recall that $G$ is generic by $Y$ hence $G_\varrho$ is generated by
$\{f,h_\varrho(y):y in Y\}$, hence for some $n \ge 1$ and
$y_0,\dotsc,y_{n-1} \in Y$ and $b_0,\dotsc,b_{n-1} \in R \backslash
\{0_R\}$ we have $f_1(h_\rho(\sum\limits_{\ell < n} b_\ell,y_\ell))
\in R \backslash \{0\}$ hence for some $\ell < n,0 \ne
f_1(h_\varrho(b_\ell y_\ell)) = f_1(b_\ell h_\varrho(y_\ell))$.  So
letting $y = h_\varrho(y_\ell)$ we have $y \in G_\varrho$ and for some
$\ell \in R \backslash \{0\},c = f_1(b_\ell h_\varrho(y_\ell)) =
f_2(b_\ell y))$.

As said above about $f_1$ we have $f_1(y) = f_1(h_\varrho(y_\ell)) =
f_1(h_{\varrho \char 94 \langle y_\ell\rangle}(z))$ so 
$f_1(h{\varrho \char 94 \langle y_\ell\rangle}(z)) = b \in R
\backslash \{0\}$.  So $h_{\varrho \char 94 \langle y_\ell\rangle}
\circ f_1 \in \Hom(G,{}_R R)$ maps $z$ into $h \in R \backslash
\{0\}$, contradiction.

\noindent
4) Similarly but repalcing $x_{\bar\eta}(\bar\eta \in \Lambda_{\bold
  x,< \bold k})$ by $x_{\bar\eta,\zeta}(\zeta < |R|^+)$.

\noindent
5) Let $\langle (\bar\alpha_\varp,a_\varp):\varp < \chi\rangle$ list,
possibly with repetitions the members of
$\{(\bar\alpha,a_\varp):\bar\alpha = (\alpha_0,\alpha_1)$ such that
$\alpha_0 < \alpha_1 < \chi$ and $a \in R \backslash \{0_R\}\}$,
possibly with repetitions, and $\bold b$ be a $(\chi,\bold k,1)$-BB
for $\bold x$ and let $\bold b^\iota$ for $\iota =0,1,2$ be the
functions with the same domain as $\bold b$ (writing $\bold
b^\iota_{\bar\eta}(m,i)$ or $\bold b_{\eta,\iota}(m,i)$ for $\bold
b^\iota(\bar\eta,m,i))$ such that $\varp = \bold b_{\bar\eta}(m,i)$
implies $9\alpha_{\varp,\ell}:\ell < 3\rangle = (\bold
b^0_{\bar\eta}(m,i),\bold b^1_{\bar\eta}(m,i),\bold
b^2_{\bar\eta}(m,i))$.

Let $G_0 = \oplus\{R x_{\bar\eta,\varp}:\bar\eta \in \Lambda_{\bold
  x,< \bold k}$ and $\varp < \chi\}$ and
\mn
\begin{enumerate}
\item[$(*)_1$]  for $\bar\eta \in \Lambda_{\bold x}$ let
\sn
\begin{enumerate}
\item[(a)]  $G^0_{\bar\eta} = \Sigma\{R x_{\bar\eta \upharpoonleft
    (m,i),\varp}: m < \bold k,i < \partial_m$ and $\varp < \chi\}$
\sn
\item[(b)]  $G^{0,0}_{\bar\eta} = \Sigma\{R x_{\bar\eta \upharpoonleft
(m,i),\bold b_{\bar\eta,1}(m,i)} - x_{\bar\eta \upharpoonleft
(m,i),\bold b_{\bar\eta,0}(m,i)}): m < \bold k,i < \partial_m\} \oplus
R z$
\sn
\item[(c)]  $G^{0,1}_{\bar\eta} = G^{0,0} \oplus R z$
\sn
\item[(d)]  $h_{\bar\eta}$ be the homomorphism from
  $G^{0,0}_{\bar\eta}$ into $R$ such that:
\begin{itemize}
\item  $h_{\bar\eta} \rest G^{0,0}$ is constantly zero
\sn
\item  $h_{\bar\eta}(z)$ is $\bold b_{\bar\eta,2}(0,0) \in R
  \backslash \{0\}$
\end{itemize}
\sn
\item[(e)]  let $\bold h_{\bar\eta}$ be the isomorphism from $G_0 = 
\oplus\{R x_{\bar\eta \upharpoonleft (m,i)}:m < \bold k,i
< \partial_m\}$ onto $G^{0,1}_{\bar\eta}$ such that
$\bold h_{\bar\eta}(z)=z,\bold h_{\bar\eta}(x_{m,i}) = (x_{\bar\eta
  \upharpoonleft (m,i),\bold b_{\bar\eta,1}(m,i)} - x_{\bar\eta
  \upharpoonleft (m,i),\bold b_{\bar\eta,0}(m,i)}$
\sn
\item[(f)]  $G^\bullet_{\bar\eta,1}$ be an $R$-module extending the
  $R$-module $G^\bullet_{\bar\eta}$ such that the triple
$G^\bullet_0,G_{\bar\eta,1},h^\bullet_{\bar\eta} \circ \bold
h_{\bar\eta})$ is as in \ref{d23}(1)(B)$(*)$
\sn
\item[(g)]  let $\bold h ^+_{\bar\eta},G^1_{\bar\eta}$ are such that
  $G^1_{\bar\eta}$ is an $R$-module extending $G^0_{\bar\eta}$ and
  $h^+_{\bar\eta}$ is an isomorphism from $G^\bullet_{\bar\eta,1}$
  onto $G^1_{\bar\eta}$ extending $\bold h_{\bar\eta}$.
\end{enumerate}
\end{enumerate}
\mn
Lastly, let
\mn
\begin{enumerate}
\item[$(*)$]  \wilog \, $G^1_{\bar\eta} \cap G_0 =
G^{0,0}_{\bar\eta},\langle G^1_{\bar\eta} \backslash
G^{0,0}_{\bar\eta}:\bar\eta \in \Lambda_{\bold x}$ are pairwise
disjoint and $G^*_1$ is an $R$-module extending $G_0$ and
$G^1_{\bar\eta}$ for $\bar\eta \in \Lambda_{\bold x}$ and generated by
their union freely (except the equations implicit in ``extending"
above).
\end{enumerate}
\mn
Note
\mn
\begin{enumerate}
\item[$(*)$]  if $h \in \Hom(G,{}_R R)$ satisfies $h(z) \ne 0_R$ then
  we define a function $\bold c:\Lambda_{\bold x,< \bold k}
  \rightarrow \chi$ as follows: $\bold c$ is the minimal $\varp <
  \chi$ such that:
\sn
\begin{itemize}
\item  $h(x_{\bar\eta,\alpha_{\varp,0}}) =
  h(x_{\bar\eta,\alpha_{\varp,1}})$
\sn
\item  $h(z) = a_\varp$.
\end{itemize}
\end{enumerate}
\mn
The rest should be clear.
\end{PROOF}

\begin{remark}
We can use a $2^\chi$-BB $\bold b$ and then let $\bold c(\bar\eta)$
code $(h \rest \{x_{\bar\eta,\varp}:\varp < \chi\},h(z))$.

Lastly, $H := H_1/H_0$ is as required.
\end{remark}

\begin{remark}
\label{d25}
1) There is an 
alternative to the proof of \ref{d24}(3), assume that $\bold x$ has
$\aleph_0$-well orderable $(\chi,\bold k,1)$-BB $\bar\alpha$ as
witnessed by $\bar\Lambda$, see Definition \ref{a61}.  We then can
find a $(R,\bold x)$-construction obeying $\bar\Lambda$, see
\ref{d4}(1B).

\noindent
2) It may suffice for us to prove in \ref{d24} that $\gx$ is simple
and $Rz$ is not a direct summand of the $R$-module $G_{\gx}$.  For
this we can weaken the demand in Definition \ref{d23}(1)(B) demanding
$h(z) = 1_R$.
\end{remark}

\begin{claim}
\label{d29}
1) Let $\partial = \aleph_0,J = J^{\bd}_\partial$ and $\bold k=1$,
\then \, $(\partial,J,\bold k_\theta)$ freely fit $R$ \when \,:
\mn
\begin{enumerate}
\item[$\oplus_1$]
\begin{enumerate}
\item[(a)]    $R$ is an infinite ring
\sn
\item[(b)]  if $d \in R \backslash \{0\}$ and $\bar d \in {}^\omega
  R$, \then \, we can find $a^\iota_n \in R$ for $\iota = 1,2,3$ and
  $n < \omega$ such that the following set $\Gamma$ of equations
  cannot be solved in $R$:
\[
\Gamma = \{a_n x_{n+2} = x_n + d_n + b_n d:n < \omega\}.
\]
\end{enumerate}
\end{enumerate}
\mn
2) For $\partial,J,\bold k$ as above, $(\partial,J,\bold k,\theta)$
freely weakly fit $R$ \when \,:
\mn
\begin{enumerate}
\item[$\oplus_2$]
\begin{enumerate}
\item[(a)]    as above
\sn
\item[(b)]  for every $d \in R \backslash \{0\}$ letting
  $\bigwedge\limits_{n} d_n = 0_R$, the demand in $\oplus$ above
  holds, i.e. there are $a_n,b_n \in R$ for $n < \omega$ such that the
  following set  $\Gamma$ of equations is not solved in $R$:
\begin{itemize}
\item  $\Gamma = \{a_n x_{n+1} = x_n + b_n d:n < \omega\}$.
\end{itemize}
\end{enumerate}
\end{enumerate}
\mn
3) In part (2), $\oplus^*_1$ holds when:
\mn
\begin{enumerate}
\item[$\oplus_3$]
\begin{enumerate}
\item[(a)]    as above
\sn
\item[(b)]   $(R,+),R$ as an additive group is $\aleph_1$-free.
\sn
\item[(c)]  $(R,+)$ is $\aleph_1$-free 
or at least $\cap\{nR:n \ge 2\} = \{0\}$.
\end{enumerate}
\end{enumerate}
\end{claim}

\begin{PROOF}{\ref{d29}}
We should check all the clauses in Definition \ref{d23}.  First,
Clause (A) is obvious: $R$ is a ring by $\oplus(a), \bold k=1 >0$ by a
\ref{d29} assumption, of course, letting $\bar\partial =
\langle \partial_0 \rangle,\partial_0 = \partial,\partial$ is regular
being $\aleph_0$ and $\bar J = \langle J_0 \rangle,J_0 = J$ is
$J^{\bd}_\partial = J^{\bd}_{\aleph_0}$ so an ideal on $\partial$.

Second, toward proving Clause (B), assume $G_0 = \oplus\{R x_{m,i}:m <
\bold k=1$ so $m=0$ and $i < \partial\} \oplus \bbZ z,h_0 \in
\Hom(G_0,{}_R R)$ and $d := h_0(z) \ne 0_R$ and let $d-n =
h(x_{0,n})$.  We should find $G_1$ satisfying $(*)$ there.  Let
$\langle (a^\bullet_n,b_n):n < k\rangle$ be as guaranteed by
$\oplus(b)$ of the claim for $d,\langle d_n:n < \omega\rangle$ from
above.

For each $i < \partial$ let $G^*_n = G_0 \oplus(R y_n)$ and
$R$-module; clearly there is an embedding $g_n:G^*_n \rightarrow
G^*_{n+1}$ such that $g_n \rest G_0 = \id_{G_0}$ and $g_n(y_n) =
a^\bullet_n y_{n+1} + x_{0,n} + b_n z$ where the $a_n,b_n \in R$ are
from $\oplus(b)$ for our $h$.

Renaming \wilog \, $G^*_n \subseteq G^*_{n+1}$ and $g_n$ is the
identity on $G^*_n$.  Lastly, let $G_1 = \bigcup\limits_{n} G^*_n$ and
it suffices to prove that $(*)$ of Definition \ref{d23} is satisfied.
Clearly $G_1$ is an $R$-module extending $G_0$, i.e. $(*)(\alpha)$
holds.  Also $|G_1| \le \aleph_0 + |G_0| = \aleph_0 + \aleph_0 \cdot
|R| = |R| < |R|^+ = \theta$, recalling $R$ is an infinite ring, so
also $(*)(\beta)$ holds.

Lastly, to prove $(*)(\gamma)$, toward contradiction assume $h_2 \in
\Hom(G_1,{}_R R)$ extends $h$.  Let $c_n := h_2(y_n) \in R$, now
\mn
\begin{enumerate}
\item[$(*)$]
\begin{enumerate}
\item[(a)]  $\bar c = \langle c_n:n < \omega\rangle \in {}^\omega
  \bbR$
\sn
\item[(b)]  $a_n c_{n+1} = a_n h_1(y_{n+1}) = h_2(a^1_\eta y_{n+1}) =
  h_2(y_n + x_{0,n} + b_n z) = h_2(y_n) + h_1(x_{0,n}) + b_n h_2(z) =
  c_n + d_n + b_n d$.
\end{enumerate}
\end{enumerate}
\mn
So $\bar c$ solves (in $R$) the set of equations $\Gamma = \{a_n
z_{n+1} = z_n + d_n + b_n d:n < \omega\}$, contradicting the choice of
$\langle (a_n,h_n):n < \omega\rangle$.

We still have to justify the ``freely", i.e. clause $(\delta)$ of
\ref{d23}(2).  So let $m_* < \bold k$, i.e. $m_*=0$ and $w \in J_0 =
J^{\bd}_\partial$ so $w$ is finite and let $G_0 = \oplus\{R x_{0,i}:i
\in w\}$, let $n_*$ be such that $\sup(w) < n_*$ and we easily finish
by noting:
\mn
\begin{enumerate}
\item[$(*)$]  the sequence $\langle y_n:n > n_*\rangle \char 94
  \langle x_{0,m}:m \le n^*\rangle \char 94 \langle z \rangle$
  generate $G_1$.
\end{enumerate}
\mn
[Why?  Freely, it generates $G_1$ because $x_{0,m} = a_n y_{m+2} -b_m
y_m$ for $m > n_*$, use $y_n = a_n y_{n+1} - x_{0,n} - b_n z$ by
downward induction on $n \le n_*$; translating the equations they
become trivial.]

\noindent
2) Similarly but we choose $g_n$ such that $g_n(y_n) = a_n y_{n+1} +
(x_{0,2n} - x_{0,2n+1}) + b_n z_n$.

\noindent
3) Choose $b_n = 1_R,a_n:n! \cdot 1_R$.
\end{PROOF}

\begin{claim}
\label{d31}
1) The triple $(\partial,J,\bold k,\theta)$ freely fit $\bbZ$ \when \,:
\mn
\begin{enumerate}
\item[$(a)$]  $\theta = \aleph_2,\partial = \aleph_2$ and $\bold k > 0$
\sn
\item[$(b)$]  $J = J^{\bd}_{\aleph_1} \times J^{\bd}_{\aleph_0}$, but
  pedantically use the isomorphic copy $J_{\aleph_1 * \aleph_0} 
= \{A$: for some $n_\alpha < \omega$ for $\alpha < \omega_1,i_* <
\omega_1$ we have $A \subseteq \{\omega \cdot i+n:i < i_*$ or $n < 
n_\alpha\}\}$; hence it is also O.K. to use 
$J = J^{\bd}_{\aleph_0} * J^{\bd}_{\aleph_1}$.
\end{enumerate}
\mn
2) The triple $(\aleph_1,J,\bold k,\theta)$ freely fits $R$ \when \,:
\mn
\begin{enumerate}
\item[$(a),(b)$]  as above 
\sn
\item[(c)]  $\theta = \aleph_2$
\sn
\item[$(d)$]  given $b_{\alpha,n} \in R$ for $\alpha < \omega_1,n <
  \omega$ and $t \in R \backslash \{0_R\}$, there are pairwise distinct
  $\rho_\alpha \in {}^\omega 2$ for $\alpha < \omega_1$ and
  $a_{\alpha,n},d_{\alpha,n} \in R$ such that the following set of
  equations is not solvable in $R$:
\sn
\begin{itemize}
\item[$\bullet$]  $d_{\alpha,n+1} y^1_{\alpha,n+1} = y^1_{\alpha,n} -
  y^2_{\rho_\alpha \rest n} - b_{\alpha,n} - a_{\alpha,n} t$.
\end{itemize}
\end{enumerate}
\mn
3) Similarly for ``weakly" fit
\end{claim}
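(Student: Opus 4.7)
The plan is to prove (2) first; (1) then follows by verifying the hypothesis $(d)'$ for $R=\bbZ$ via a standard diagonalization. Identify $\partial_m = \aleph_1$ with $\omega_1 \times \omega$ via the $*$-decomposition of $J$, so each index $i < \aleph_1$ corresponds to a pair $(\alpha, n)$ and the sets in $J$ are exactly those contained in some ``L-shape'' $L(\alpha_*, n_*) = \{(\alpha, n) : \alpha < \alpha_* \text{ or } n < n_*\}$. Fix $m_0 < \bold k$ (using $\bold k > 0$). Given $h \in \Hom(G_0, {}_R R)$ with $t := h(z) \ne 0_R$, invoke $(d)'$ with $b_{\alpha, n} := h(x_{m_0, \alpha, n})$ to obtain pairwise distinct $\rho_\alpha \in {}^\omega 2$ and coefficients $a_{\alpha, n}, d_{\alpha, n} \in R$ witnessing unsolvability. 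Let $G_1$ be the $R$-module freely generated over $G_0$ by fresh symbols $\{y^1_{\alpha, n} : \alpha < \omega_1,\ n < \omega\} \cup \{y^2_\sigma : \sigma \in {}^{<\omega}2\}$ modulo only the relations
\[
d_{\alpha, n+1}\, y^1_{\alpha, n+1} = y^1_{\alpha, n} - y^2_{\rho_\alpha \restriction n} - x_{m_0, \alpha, n} - a_{\alpha, n} z \qquad (\alpha < \omega_1,\ n < \omega).
\]
Each relation introduces the new generator $y^1_{\alpha, n+1}$ on the left, so $G_0 \hookrightarrow G_1$ and $|G_1| \le \aleph_1 + |R| < \theta$.

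Any extension $h' : G_1 \to {}_R R$ of $h$ would yield, via $y^{(i)}_{\bullet} := h'(y^i_{\bullet})$, a solution in $R$ to the very system that $(d)'$ declares unsolvable---contradiction. This gives clause $(B)(*)(\gamma)$ of Definition \ref{d23}. For the freeness clause $(B)(*)(\delta)$, let $m_* < \bold k$ and $w \in J_{m_*}$. If $m_* \ne m_0$, no relation involves $x_{m_*, \cdot}$, so these generators remain free summands in the quotient. If $m_* = m_0$, pick $\alpha_*, n_*$ with $w \subseteq L(\alpha_*, n_*)$. In $G_1/G_0^{(m_0, w)}$, each relation indexed by $(\alpha, n) \notin w$ retains the term $x_{m_0, \alpha, n}$, which appears only in that one relation and can therefore be used to eliminate that relation entirely. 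The surviving relations are indexed by $(\alpha, n) \in w$. For $\alpha \ge \alpha_*$ only the $n < n_*$ relations survive; they triangularly express $y^1_{\alpha, 0}, \dots, y^1_{\alpha, n_*-1}$ in terms of $y^1_{\alpha, n_*}$ and finitely many $y^2$-generators, contributing free summands. The remaining delicate case is the countably many $\alpha < \alpha_*$ where infinite chains may survive. Order these $\alpha$'s in type $\le \alpha_*$: at each stage, pairwise distinctness of the $\rho_\alpha$'s implies that for all $n$ beyond some finite threshold, the generators $y^2_{\rho_\alpha \restriction n}$ are fresh (not yet used by earlier stages), so a Pontryagin-style argument produces a free basis step by step.

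For part (1), specialize to $R = \bbZ$ and check $(d)'$. Set $d_{\alpha, n+1} := 2$; iterating the relation forces
\[
y^1_{\alpha, 0} = 2^{n+1} y^1_{\alpha, n+1} + \sum_{k=0}^{n} 2^k \bigl(y^2_{\rho_\alpha \restriction k} + b_{\alpha, k} + a_{\alpha, k} t\bigr),
\]
so any integer solution requires $2^k(y^2_{\rho_\alpha \restriction k} + b_{\alpha, k} + a_{\alpha, k} t) = 0$ for $k \ge k_0(\alpha)$ (else the $2$-adic series converges to a non-integer). That is, $y^2_{\rho_\alpha \restriction k} = -b_{\alpha, k} - a_{\alpha, k} t$ for $k$ large. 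Choose the $\rho_\alpha$'s so that every $\sigma \in {}^{<\omega} 2$ is an initial segment of uncountably many $\rho_\alpha$'s (e.g., embed $\omega_1$ injectively into a perfect subset of ${}^\omega 2$), and pick the $a_{\alpha, n}$ so that for every $\sigma$ the collection $\{-b_{\alpha, |\sigma|} - a_{\alpha, |\sigma|} t : \rho_\alpha \supseteq \sigma\}$ takes uncountably many distinct integer values---possible because $t \ne 0$ makes $a \mapsto at$ injective on $\bbZ$. A single integer $y^2_\sigma$ cannot realize uncountably many distinct values, so no solution exists.

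The principal obstacle is the freeness analysis for the countably many $\alpha < \alpha_*$ in the L-shape: after elimination of the $x$-terms, genuinely infinite chains survive and interlink through shared $y^2_\sigma$'s on common initial segments of the $\rho_\alpha$'s. Decoupling them into a free presentation requires a careful Pontryagin-style induction exploiting the eventual branching of distinct $\rho_\alpha$'s---this is the combinatorial heart of the argument and is precisely why the ideal is chosen to be $J^{\bd}_{\aleph_1} * J^{\bd}_{\aleph_0}$, so that the troublesome $\alpha$'s are confined below a countable ordinal.
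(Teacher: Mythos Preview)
Your argument has two genuine gaps.

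\textbf{The freeness clause for $m_*\ne m_0$.} In part~(2) you put only $x_{m_0,\alpha,n}$ into the relations. Take $\bold k>1$, any $m_*\ne m_0$, and any $w\in J_{m_*}$. The subgroup $G_0^{(m_*,w)}$ already contains every $x_{m_0,\cdot}$ and $z$, so in $G_1/G_0^{(m_*,w)}$ each relation reads
\[
d_{\alpha,n+1}\,y^1_{\alpha,n+1}=y^1_{\alpha,n}-y^2_{\rho_\alpha\rest n}
\]
with \emph{no} $x$-term left to eliminate it, for all $\omega_1$ many $\alpha$. This quotient is not free: let $H_0=\langle y^2_\sigma:\sigma\in{}^{<\omega}2\rangle$; for each fixed $\alpha$ the assignment $y^1_{\alpha,n}\mapsto 2^{-n}$, $y^1_{\gamma,n}\mapsto 0$ ($\gamma\ne\alpha$), $y^2_\sigma\mapsto 0$ defines a homomorphism to $\bbZ[1/2]$, so the cosets $y^1_{\alpha,0}+H_0$ are $\aleph_1$ pairwise distinct elements of infinite $2$-height in the quotient by $H_0$. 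Were the module free of rank $\aleph_1$, the countable $H_0$ would lie in a countable direct summand, and all elements of infinite $2$-height in the quotient would lie in a countable group --- contradiction. Your sentence ``no relation involves $x_{m_*,\cdot}$, so these generators remain free summands'' misidentifies the issue: those $x_{m_*,i}$ with $i\notin w$ are indeed free summands, but the remaining $y$-part is not free. The paper's fix is to insert $\sum_{m<\bold k}x_{m,\omega\alpha+n}$ (all coordinates at once) and set $b_{\alpha,n}=\sum_m h(x_{m,\omega\alpha+n})$; then for \emph{every} choice of $m_*$ an $x_{m_*}$-term survives outside $w$ and absorbs its relation, reducing to the countably many $\alpha<\alpha_*$, where your Pontryagin argument applies.

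\textbf{The verification of $(d)'$ for $\bbZ$.} From
\[
y^1_{\alpha,0}=2^{n+1}y^1_{\alpha,n+1}+\sum_{k\le n}2^k c_k,\qquad c_k:=y^2_{\rho_\alpha\rest k}+b_{\alpha,k}+a_{\alpha,k}t,
\]
you cannot deduce $c_k=0$ for large $k$: with $c_k\equiv 1$ the $2$-adic series sums to $-1\in\bbZ$. So the step ``$y^2_\sigma=-b_{\alpha,|\sigma|}-a_{\alpha,|\sigma|}t$ for large $|\sigma|$'' fails, and with it the overdetermination argument. The paper proceeds differently: it uses multipliers $(n+1)$ (so factorials accumulate), and chooses $a_{\alpha,n}$ so that a fixed residue set $C_n\subseteq\{0,\dots,n+1\}$ lets the residue modulo $n+2$ of a certain partial sum \emph{encode} the bit $\rho_\alpha(n)$. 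Given a putative extension $f$, a pigeonhole gives $\alpha\ne\beta$ with $f(y^1_{\alpha,0})=f(y^1_{\beta,0})$ and $\rho_\alpha\rest(n_*+1)=\rho_\beta\rest(n_*+1)$ (where $n_*=|f(z)|$); subtracting the iterated equations at the first $n$ with $\rho_\alpha(n)\ne\rho_\beta(n)$ then yields a contradiction modulo $n+2$, since the $y^2$-terms cancel on the common prefix.
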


\begin{remark}
1) Probably we can use $\bar\partial = \langle \partial_\ell:\ell < \bold
   k\rangle$ with $\partial_\ell \in \{\aleph_0,\aleph_1\}$ but no
   real need so far.

\noindent
2) This is essentially \cite[\S4]{Sh:98} and
\cite[4.10(C)=L5e.28]{Sh:898}.  
\end{remark}

\begin{PROOF}{\ref{d31}}
1) Proving clause (A) of \ref{d23}(1) and
clause (B)$(\delta)$ of \ref{d23}(c) is easy as in \ref{d29}, so we
concentrate on \ref{d23}(B).

So let $G_0,h$ be as in \ref{d23}(1)(B).  Choose $p_n$ by induction
on $n$ as follows: $p_0 = 2,p_{n+1}$ the first prime $> p_n+n$ such
that $p_{n+1}!/(c_{n+1}-n) > \sqrt{p_{n+1!}}$; where we let $c_n =
\prod\limits_{m < n}(p_m!)$.

Now observe that:
\mn
\begin{enumerate}
\item[$\boxplus$]  for $n \ge 100$ there is $C_n \subseteq
\{0,1,\dotsc,(p_n!)-1\}$ such that: if $b \in \bbZ$ and $t \in \bbZ$
satisfies $0 < |t| < n$ then for some $a_0,a_1 \in \bbZ$ we have
\sn
\begin{enumerate}
\item[$\bullet$]   $b + c_n a_0 t \in \cup\{i+(p_{n+1}!-1)\bbZ:i \in C_n\}$
\sn
\item[$\bullet$]  $b + c_n a_1 t \notin \cup\{i + (p_{n+1}!)\bbZ:i \in C_n\}$.
\end{enumerate}
\end{enumerate}
\mn
[Why?  It suffices to consider $b \in \{0,\dotsc,p_n!-1\},t
\in\{\ell,-\ell:\ell \le n,\ell \ne 0\}$ and let $A_{b,t} = \{b + c_n a t:a
\in \bbZ\} \cap \{0,\dotsc,p_{n+1}!-1\}$.
Clearly $|A_{b,t}| = (p_n!)/(c_n \cdot |t|) > \sqrt{p_n!}$.
The family $\{A_{b,t}:b \in \{0,\dotsc,p_{n+1}!-1\},
t \in \{\ell,-\ell:\ell \le n,\ell \ne =0\}\}$ has at most $2 n(p_n!)$
members.   Easily the number of $C \subseteq
\{0,\dotsc,p_n!-1\}$ such that $(C \supseteq A_{b,t}) \vee (C \cap
A_{b,t}) = \emptyset)$ for some pair $(b,t)$ as above is\footnote{In
 other words, for each $b,t a$ above a random 
$C \subseteq \{0,\dotsc,p_{n+1}!-1\}$ has probability 
$\le \frac{2}{2^{|A_{b,t}|}} \le \frac{2}{2^{\sqrt p_n!}}$ to include
  $A_{b,t}$ or to be disjoint to it.  So the probability that this
  occurs for some pair $(b,t)$ in $\le 2 \cdot |\{A_{b,t}:b,t$ is as
  above$\}|/2^{\sqrt p_n!} \le 4 n(p_n!)/2^{\sqrt{p_n!}}$ which is $\ll 1$.}
$< 2^{\sqrt p_{n+1}!}$ hence there is $C_n$ as required.]

Let $\Omega \subseteq {}^\omega 2$ be of cardinality $\aleph_1$ and
$\langle \rho_\alpha:\alpha < \omega_1\rangle$ list $\Omega$ without
repetitions.

Let $G$ be generated by $\{x_{m,\alpha}:\alpha < \aleph_1,m < \bold k\} \cup
\{y^1_{\rho,n}:\rho \in \Omega$ and $n < \omega\} \cup \{y^2_\varrho:
\varrho \in {}^{\omega >}2\} \cup \{z\}$ freely except the equations:
\mn
\begin{enumerate}
\item[$(*)^1_{\alpha,n}$]  $p_n! y^1_{\alpha,n+1} = y^1_{\alpha,n} -
  y^2_{\rho_\alpha \rest n} - \sum\limits_{m < \bold k}
x_{m,\omega \cdot \alpha +n} - a_{\alpha,n} z$
\end{enumerate}
\mn
where $a_{\alpha,n} \in \bbZ$ are chosen below; let $\bar a = \langle
a_{\alpha,n}:\alpha < \omega_1,n < \omega\rangle$, so really $G =
G_{\bar a}$ and let $\bar a_{\alpha,<n} = \langle a_{\alpha,n_1}:n_1 <
n\rangle$.

Note that in $G$
\mn
\begin{enumerate}
\item[$(*)^2_{\alpha,n}$]  $y^1_{\alpha,0} = c_n y^1_{\alpha,n} +
\sum\limits_{n_1 < n} \, c_{n_1}(y^2_{\rho_\alpha \rest n_1} + 
\sum\limits_{m < \bold k} x_{m,\omega \cdot \alpha +n} + a_{\alpha,n_1} z)$.
\end{enumerate}
\mn
Define
\mn
\begin{enumerate}
\item[$(*)^3_{\alpha,n}$]  $b_{\alpha,n} = 
\sum\limits_{n_1 \le n} h(\sum\limits_{m < \bold k} c_{n_1} 
\,x_{m,\omega \cdot \alpha + n}) \in \bbZ$.
\end{enumerate}
\mn
Recall $G_0,h$ are as in \ref{d23}(1)(B).
Let $n_* = |h(z)|$ so $n_* > 0$.  We choose $a_{\alpha,n} \in \bbZ$ by
induction on $n$ such that: if $n > |h(z)|$ then
\mn
\begin{enumerate}
\item[$(*)^5_{\alpha,n}$]  $\rho_\alpha(n)=1$ \Iff \, $(b_{\alpha,n}  + 
\sum\limits_{n_1 \le n} c_{n_1} 
a_{\alpha,n_1} h(z))$ is equal to some $a \in C_n$ modulo $< p_n!$.
\end{enumerate}
\mn
[Why possible?  Arriving at $n$, the sum on the right side is $b_{\alpha,n} +
\sum\limits_{n_1 < n} c_{n_1} a_{\alpha,n_1} h(z)) + 
c_n a_{\alpha,n} h(z) \in \bbZ$, with the first two summands being
already determined, i.e. are computable from $\bar a_{\alpha,<n}$ and 
$|h(z)| \le n$, applying $\boxplus$ with $(n,h(z),b_{\alpha,n} +
\Sigma\{c_{n_1} a_{\alpha,n_1} h(z):n_1 < n\})$ here standing for
$(n,t,b)$ there we get there $a_0,a_1$ and let
  $a_{\alpha,n}$ be $a_0$ if $\rho_\alpha(n)=0$ and $a_1$ if
$\rho_\alpha(n)=1$. 
So for every $n,a_{\alpha,n}$ is as required and can be chosen.]

Having chosen $\bar a = \langle a_{\alpha,m}:\alpha < \omega_1,m <
\omega\rangle$, the Abelian group $G = G_{\bar a}$ is chosen.
Hence we just have to prove that $G$ is as required in clause (B) of
\ref{d23}(1),(2).  First, for \ref{d23}(1)(B)
\mn
\begin{enumerate}
\item[$\odot$]   toward contradiction assume
that $f \in \Hom(G,\bbZ)$ extend $h$ and $n_* = |f(z)|$ is $>0$
\end{enumerate}
\mn
 hence (for every $\alpha,n$ applying $f_n$ to the equation in
 $(*)^2_{\alpha,n}$):
\mn
\begin{enumerate}
\item[$(*)^6_{\alpha,n}$]  $f(y^1_{\alpha,0}) = c_n! f(y^1_{\alpha,n}) +
 \sum\limits_{n_1 < n} c_{n_1} \, f(y^2_{\rho_\alpha \rest n_1}) +
\sum\limits_{n_1<n} \, \sum\limits_{m < \bold k} c_{n_1} \,
f(x_{m,\omega \cdot \alpha +n_1}) + \sum\limits_{n_1<n}
  c_{n_1} \, a_{\alpha,n} f(z)$.
\end{enumerate}
\mn
So recalling $|h(z)| = n_*$ for some $\rho_* \in {}^{n_* +100}2$ and 
$a_* \in \bbZ$ we have $|S| =
\aleph_1$ where $S = \{\alpha < \aleph_1:f(y^1_{\alpha,0}) \equiv a_*$
and $\rho_\alpha \rest (n_* +1)) = \rho_*\}$.

So choose $\alpha < \beta$ from $S$ and let $n =
\min\{\ell:\rho_\alpha(\ell) \ne  \rho_\beta(\ell)\}$, clearly 
we have $n > n_*$ hence $n \ge n_* +1 \ge 2$ and substructing
the equations $(*)^6_{\alpha,n+1}, (*)^6_{\beta,n+1}$, in the left
side we get a multiple of $c_{n+1}$ so a
number divisible by $p_n!$ and in the right side we
get the sum of the following four differences:
\mn
\begin{enumerate}
\item[$\odot_1$]  $f(y^1_{\alpha,0})- (f(y^1_{\beta,0})$ which is zero
 by the choice of $S$ and the demand $\alpha,\beta \in S$
\sn
\item[$\odot_2$]  $\sum\limits_{n_1 \le n} c_{n_1} \,
f(y^2_{\rho_\alpha \rest n_1}) - \sum\limits_{n_1 \le n}
c_{n_1} \, f(y^2_{\rho_\beta \rest n_1})$
  which is zero as $n_1 \le n \Rightarrow \rho_\alpha \rest n_1 =
  \rho_\beta \rest n_1$
\sn
\item[$\odot_3$]  $\sum\limits_{n_1 \le n} \, \sum\limits_{m < \bold k} 
c_{n_1} \, f(x_{m,\omega \cdot \alpha + n}) -
\sum\limits_{n_1 \le n} \, \sum\limits_{m < \bold k}
c_{n_1} \, f(x_{m,\omega \cdot \beta +n})$ which, recalling
$(*)^3_{\alpha,n} + (*)^3_{\beta,n}$ is equal to $b_{\alpha,n} -
b_{\beta,n}$ by the choice of $b_{\alpha,n},b_{\beta,n}$ as $f,h$
agree on $G_0$
\sn
\item[$\odot_4$]  $\sum\limits_{n_1 \le n} c_{n_1} \,
a_{\alpha,n_1}f(z) - \sum\limits_{n_1 \le n} c_{n_1} \, 
a_{\beta,n_1} f(z)$.
\end{enumerate}
\mn
Hence (recalling $f(z) = h(z)$)
\mn
\begin{enumerate}
\item[$\boxdot$]  $(b_{\alpha,n} + \sum\limits_{n_1 \le n}
 c_{n_1} \, a_{\alpha,n_1} f_\alpha(z)) - 
(b_{\beta,n} + \sum\limits_{n_1 \le n} c_{n_1} \,
a_{\beta,n} f(z))$ is divisible by $p_n!$ in $\bbZ$.
\end{enumerate}
\mn
But by the choice of $a_{\alpha,n}$, i.e. by $(*)^5_{\alpha,n}$
 we know that $(b_{\alpha,n} + \sum\limits_{n_1 \le n} c_{n_1} \, 
a_{\alpha,n} f(z))$ is equal modulo $p_n!$
to some $i \in C_n$ iff $\rho_\alpha(n)=1$.  Similarly 
for $\beta$, but $\rho_\alpha(n) \ne
\rho_\beta(n)$ contradiction to $\boxdot$.  So indeed, $\odot$ leads
to contradiction.  This means that the demand in \ref{d23}(1)(B) is
satisfied.  Second, recall that we need to verify the ``freely fit".
This means that for
\mn
\begin{enumerate}
\item[$\circledast_1$]  for $\bar a$ as above and $w \in J$, the
 Abelian group $G_{\bar a}/\oplus\{\bbZ x_\alpha:\alpha \in w\}$ is free
\sn
\item[$\circledast_2$]   $G_{\bar a}$ is free.
\end{enumerate}
\mn
[Why?  Easy.]

Hence
\mn
\begin{enumerate}
\item[$\circledast_3$]  \wilog \, $w = \{w \alpha +n:\alpha <
  \alpha_*$ or $\alpha < \omega_1 \upharpoonleft n < n^*_\alpha\}$ for
  some $\alpha_* < w_1$ and $n^*_\alpha < w$ for $\alpha < \omega_1$.
\end{enumerate}
\mn
Now
\mn
\begin{enumerate}
\item[$\circledast_4$]   letting $G_* = \oplus\{\bbZ
  y^2_\varrho:\varrho \in {}^{\omega >}2\} \oplus \bigoplus\{\bbZ
X_\alpha:\alpha < \omega \alpha_*\},B_\omega = \oplus\{\bbZ
  X_\alpha:\alpha \in \omega,\alpha \ge \omega \alpha_*\}$ we have
\sn
\begin{enumerate}
\item[(a)]  $G_\omega + G_* = G_\omega \oplus G_*$
\newline
(Why?  check.)
\sn
\item[(b)]  it suffices to prove $G_{\bar a}/(G_\omega \oplus G_*)$ is
  free
\newline
(Why?  By (a).]
\sn
\item[(c)]   $G_{\bar a}/(G_\omega \oplus G_*)$ is the direct such of
  $H'_\alpha := \langle H_\alpha + (G_\omega \oplus
  G_*\rangle/G_\omega \oplus G_*:\alpha \in [\omega
  \alpha_*,\omega_1]\rangle$ where $H_\alpha$ is the subgroup of
  $G_{\bar a}$ generated by $\{X_{\omega \alpha +n}:n < \omega\} \cup
  \{y^1_{\alpha,n}:n < \omega\} \cup \{y^2_{\rho_\alpha \rest n}:n <
  \omega\}$
\newline
(Why?  Check.)
\sn
\item[(d)]  it suffices to prove each $H'_\alpha$ is a free Abelian
  group
\newline
(Why?  By (c).)
\sn
\item[(e)]  $H'_\alpha$ is isomorphic to $H_\alpha/\oplus(\cup\{\bbZ
  X_{\omega \alpha,n}:n < n_\alpha\} \cup\{\bbZ y^2_{\rho_\alpha \rest
    n}:n < \omega\})$
\newline
(Why?  check)
\sn
\item[(f)]  $H'_\alpha$ is indeed free
\newline
(Why?  By the same proof as in \ref{d29}.)
\end{enumerate}
\end{enumerate}
\mn
So $(\partial,J,\bold k,\theta)$ freely fit $\bbZ$ indeed.

\noindent
2) We can fix $G_0 = \oplus\{R X_{m,i}:m < \bold k,i < \partial_m\}
\oplus Rz,h \in \Hom(G_0,{}_R R)$ such that $h(z) \ne 0$.  Let
$\Omega,\langle \rho_\alpha:\alpha < \omega_1\rangle$ be as in the
proof of part (1).

 We are given $b_{\alpha,n} = h(x_{m,\omega\alpha +n}
 (\alpha < \aleph_1,n \in \bbN)$ and $t = h(z)$
from $R$.  We shall choose $\langle (a_{\alpha,n},d_{\alpha,m}):
\alpha < \omega_1,n < \omega\rangle$ and will let $G$ be the 
$R$-module generated by $\{x_{m,\alpha}:\alpha <
\aleph_1,m < \bold k\} \cup \{y^1_{\alpha,n}:\alpha < \aleph_1$ and $n
 < \omega\} \cup \{y^2_\varrho:\varrho \in {}^{\omega >}2\} \cup 
\{z\}$ freely except the equations
\mn
\begin{enumerate}
\item[$(*)_{\alpha,n}$]  $d_{\alpha,n} y^1_{\alpha,n+1} =
  y^1_{\alpha,n} + y^2_{\rho_\alpha \rest n} + \sum\limits_{m <
  \bold k} x_{m,\omega \cdot \alpha +n} - a_{\alpha,n} z$.
\end{enumerate}
\mn
Hence
\mn
\begin{enumerate}
\item[$(*)'_{\alpha,n}$]  $y^1_{\alpha,0} =
(\prod\limits_{\ell < n} d_{\alpha,\ell}) y^1_{\alpha,n} + 
\sum\limits_{n_1 < n} (\prod\limits^{n-1}_{\ell = n_1} d_{\alpha,\ell})
y^2_{\rho_\alpha \rest \ell} + \sum\limits_{n_1 < n} \,
\sum\limits_{m < \bold k} (\prod\limits^{n-1}_{\ell = n_1} d_{\alpha,\ell})
x_{m,\omega \cdot \alpha +n} + \sum\limits_{n_1 < n}
(\prod\limits^{n-1}_{\ell = n_1} d_\ell) a_{\alpha,n_1} z$.
\end{enumerate}
\mn
Now continue as in the proof of part (1).
\end{PROOF}

\noindent
We now can put things together
\begin{theorem}
\label{d35}
1) For every $k \ge 1$ there is an $\aleph_{\omega_1 \cdot k}$-free
Abelian group $G$ which is not Whitehead and even $\Hom(G,\bbZ) = 0$.

\noindent
2) If the ring $R$ satisfies the demands in clause (c) part (2) 
from \ref{d31} \then \, for every $k$ there is an 
$\aleph_{\omega_1 \cdot k}$-free $R$-module such that 
$\Hom(G,{}_R R) = 0$ and $\Ext(G,{}_R R) \ne 0$.
\end{theorem}

\begin{PROOF}{\ref{d35}}
1) Given $k$ we use \ref{a52} to find a c.p. $\bold x$ which is
$\aleph_{\omega_1 \cdot k}$-free and has $\chi$-BB where $\chi
= |R| + \aleph_1$ and $J = J^{\bd}_{\aleph_0 \times \aleph_1}$.  Now
apply \ref{d31}(1) so $(\aleph_1,J,k)$ fits $\bbZ$ and by
\ref{d24}(1),(2) we get the desired conclusion.

\noindent
2) Similarly, but now we use \ref{d31}(2) rather than \ref{d31}(1).
\end{PROOF}
\newpage

\section {Forcing} \label{3}
\bigskip

The main result of the former section is the existence in ZFC, of
$\aleph_{\omega_1 \cdot n}$-free Abelian groups $G$ (for every $n \in
\omega$) such that $\Hom(G,\bbZ) = 0$.  The purpose of this section is
to show that this result is best possible in the sense of freeness
amount.  Assuming the existence of $\aleph_0$-many supercompact
cardinals in the ground model, we shall force the following
statement.  For every $\aleph_{\omega_1 \cdot \omega}$-free Abelian
group $G,\Hom(G,\bbZ) \ne 0$.

This section is divided into two subsections.  In \S(3A) like \S1 is
combinatorial; we describe a general framework for dealing with
freeness of $R$-modules (this continues \cite{Sh:161}, \cite{Sh:521}
and \cite{Sh:266}; but have to work more).

In \S(3B) we rely on forcing, we focus on $R = \bbZ$ (hence
$R$-modules are simply Abelian groups), and we prove the main
consistency result in Theorem \ref{f2} which relies on Magidor-Shelah
\cite{MgSh:204}.  The proof is based on the context of \S(3A), with
double meaning.
\bigskip

\subsection {Freeness Classes} \label{Freeness}\
\bigskip

\begin{context}  
\label{g2}
1) $R$ is a ring with no zero divisors and is hereditary (see \ref{d0}(1A)).

\noindent
2) $\bold K$ is the class of $R$-rings.

\noindent
3) $\bold K_*$ will denote a class $\subseteq \bold K$.
\end{context}

\begin{definition}  
\label{g8}
0) $\bold K^w = \{M \in \bold K:M$ a Whitehead module that is,
   $\Ext(M,{}_R R)=0$ equivalently, if $N_1 \subseteq N_2$ are
   $R$-modules, $N_2/N_1 \cong M$ and $h_1 \in \Hom(N_1,{}_R R)$ \then
   \, there is $h_2 \in \Hom(N_2,{}_R R)$ extending $h_1\}$.

\noindent
1) We say $\bold K_*$ is a $\lambda$-freeness class inside $\bold K$ \when \,:
\mn
\begin{enumerate}
\item[$(a)$]  $\bold K_* \subseteq \bold K_{< \lambda}$ where for any
cardinality $\theta$ we let 
\newline
$\bold K_{< \theta} := \{M \in \bold K:
\|M\| < \theta\}$ 
\sn
\item[$(b)$]  $\bold K_*$ is closed under isomorphisms
\sn
\item[$(c)$]   for simplicity $\lambda > |R|$.
\end{enumerate}
\mn
1A) We say $\bold K_*$ is hereditary \when \,  
$\bold K_*$ is closed under pure submodules, i.e. $M \subseteq_{\pr} N
\in \bold K_* \Rightarrow M \in \bold K_*$.

\noindent
2) We say $M \in \bold K$ is $\bold K_*$-free \when \, there is
$\bar M$ such that $\bar M = \langle M_\alpha:\alpha \le 
\alpha_*\rangle$ is purely increasing continuous,
$M_0$ is the zero module and $\alpha < \alpha_* \Rightarrow 
M_{\alpha+1}/M_\alpha \in \bold K_*$ and
$M_{\alpha_*} = M$.

\noindent
2A) $M \in \bold K$ is $(\lambda,\bold K_*)$-free \when \, every $M'
\subseteq_{\pr} M$ of cardinality $< \lambda$ is $\bold K_*$-free.

\noindent
3) $\bold K^*_{< \theta} = \bold K_* \cap \bold K_{< \theta}$ for any
   cardinal $\theta$.

\noindent
4) The class $\bold K_*$ is called a $(\lambda,\kappa)$-freeness class
\when \,: $\bold K_*$ is a $\lambda$-freeness class, $\bold K_*$ is
hereditary and if $M \in \bold K_{< \lambda} \backslash \bold K_*$ \then \,
there is $N \subseteq_{\pr} M$ from $\bold K_{< \kappa} \backslash \bold
K_*$.
\end{definition}

\noindent
The main example here is:
\begin{claim}  
\label{g16}
Assume $R = \bbZ,\lambda \ge \aleph_1$ and $\bold K =$ the class of 
$R$-modules and let $\bold K_{whu} = \bold K_* = 
\{M \in \bold K_{< \lambda}:M$ is a Whitehead module, equivalently satisfies 
condition $\oplus(b)$ of \ref{f2} below$\}$ and $\bold K_{\fr} = \{M \in
\bold K_{< \aleph_1}:M$ free$\}$.

\noindent
0) $\bold K_{\fr}$ is a hereditary $\aleph_1$-freeness class.

\noindent
1) If $\lambda > \aleph_2$ and $\MA_{< \lambda}$ \then \,
$\bold K_*$ is a hereditary $(\lambda,\aleph_2)$-freeness class.

\noindent
2) If $M \in \bold K$ is $\bold K_*$-free \then \, $M$ is a Whitehead
   group.

\noindent
3) If $M_1 \subseteq_{\pr} M_2$ and $M_2/M_1$ is $\bold K_*$-free and
   $h_1 \in \Hom(M_1,{}_R R)$ \then \, there is $h_2 \in \Hom(M_2,{}_R R)$
   extending $h_1$.

\noindent
4) If $\bold K_{**} = \{M \in K_{< \lambda}$: for every c.c.c. forcing
$\bbP_1$ for some c.c.c. forcing notion $\bbP_2$ satisfying $\bbP_1
\lessdot \bbP_2$ we have $\Vdash_{\bbP_2}$ ``$M$ is a Whitehead group"$\}$ 
\then \, $\bold K_{**}$ is $(\lambda,\aleph_2)$-freeness class.
\end{claim}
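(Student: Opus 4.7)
The plan is to treat the five parts of Claim \ref{g16} in turn: parts (0), (2), (3) by standard homological arguments on abelian groups, the hereditary clauses of (1) and (4) via the global dimension of $\bbZ$, and the reflection clauses via Shelah's MA-theorem on Whitehead groups (in a c.c.c.-absoluteness variant for (4)).

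For part (0) I invoke Pontryagin--Dedekind: every subgroup of a countable free abelian group is free, so $\bold K_{\fr}$ is trivially closed under pure submodules, and the remaining defining clauses of Definition \ref{g8}(1),(1A) are immediate. For parts (2) and (3) I argue by transfinite induction along the witnessing filtration $\langle M_\alpha : \alpha \leq \alpha_*\rangle$ of $\bold K_*$-freeness. Given $0 \to R \to H \to M_{\alpha_*} \to 0$, I construct compatible splittings $s_\alpha : M_\alpha \to H$ recursively: at a successor $\alpha+1$, the quotient $M_{\alpha+1}/M_\alpha$ is in $\bold K_*$ hence Whitehead, which allows extending $s_\alpha$ to $s_{\alpha+1}$; at limits, $\bar M$ is continuous so $\bigcup_{\beta<\alpha} s_\beta$ is a well-defined splitting on $M_\alpha$. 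Part (3) is the same induction applied to a filtration of $M_2$ with base $M_0 = M_1$ and initial splitting $h_1$, using that $M_2/M_1$ is $\bold K_*$-free.

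For the hereditary clauses of (1), (4), no MA is required: if $N \subseteq M$ the $\Ext$ long exact sequence yields
\begin{equation*}
\Ext^1(M,{}_R R) \longrightarrow \Ext^1(N,{}_R R) \longrightarrow \Ext^2(M/N,{}_R R),
\end{equation*}
and $\Ext^2(-,{}_{\bbZ}\bbZ) = 0$ since $\bbZ$ is hereditary, so $M$ Whitehead implies $N$ Whitehead; in (4) the same computation inside any c.c.c.\ extension witnessing Whiteheadness of $M$ simultaneously witnesses it for $N$. The $(\lambda,\aleph_2)$-freeness clause of (1) is the deep content, and I would cite Shelah's theorem from \cite{Sh:98} (cf.\ \cite{GbTl12}): under $\MA_{<\lambda}$, every non-Whitehead $M \in \bold K_{<\lambda}$ contains a pure subgroup of cardinality $\aleph_1$ that is non-Whitehead. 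The standard route takes a continuous pure filtration of $M$, extracts a stationary set of ``bad'' jumps via a $\Gamma$-invariant analysis, assembles an $\aleph_1$-sized pure submodule with non-free filtration, and invokes $\MA_{\aleph_1}$ to convert this into failure of Whiteheadness. For part (4), fix $\bbP_1$ witnessing $M \notin \bold K_*$, pass to $\bold V^{\bbP_1}$ in which the non-Whiteheadness of $M$ is absolute under further c.c.c.\ forcing, and run the same reflection inside an extension of $\bold V^{\bbP_1}$ forcing $\MA_{\aleph_1}$.

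The main obstacle throughout is the reflection step in (1), which genuinely uses Shelah's deep MA-argument on Whitehead groups; once it is available, the c.c.c.-absoluteness adaptation for (4) is routine, and the remaining clauses reduce to uniform $\Ext$-bookkeeping along continuous filtrations with Whitehead factors.
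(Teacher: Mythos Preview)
Your proposal is correct and follows essentially the same route as the paper: part~(0) is immediate, parts~(2) and~(3) are the transfinite induction along the $\bold K_*$-filtration (the paper pulls back the filtration of $M_2/M_1$ to a filtration $N_\alpha = \pi^{-1}(M'_\alpha)$ of $M_2$ and extends $h_1$ step by step, exactly as you describe), and part~(1) is the citation of the $\MA_{<\lambda}$ characterization of Whitehead groups. Your explicit $\Ext^2(-,\bbZ)=0$ argument for the hereditary clause is a nice touch; the paper simply appeals to \cite{Fu}.

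One point to tighten in your sketch of~(4): after forcing $\MA_{\aleph_1}$ over $\bold V^{\bbP_1}$ and reflecting non-Whiteheadness to some $N$ of size $\le \aleph_1$, that $N$ lives in the extension, whereas Definition~\ref{g8}(4) requires a pure $N' \subseteq M$ of size $<\aleph_2$ in $\bold V$ with $N' \notin \bold K_*$. This is easily repaired: use c.c.c.\ covering to find $N' \in \bold V$ pure in $M$ of size $\le \aleph_1$ with $N \subseteq N'$; then $N'$ is non-Whitehead in the extension by the hereditary property, and $\bbP_1$ itself witnesses $N' \notin \bold K_*$ because any c.c.c.\ $\bbP_2 \ge \bbP_1$ forcing $N'$ Whitehead could be amalgamated with the $\MA$-iteration, contradicting the robust non-Whiteheadness of $N'$ above $\bbP_1$. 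The paper only says ``Easy, too'' here, so you have already said more than it does.
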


\begin{PROOF}{\ref{g16}}
0) Obvious as $\bbZ$ is countable.

\noindent
1) The first property in \ref{g8}(4) holds trivially by the choice of
 $\bold K_*$.  As for the second property it is well known 
that $\bold K_*$ is a hereditary class, see \cite{Fu}.  
The third property in \ref{g8}(4) follows from the full 
characterization of being Whitehead for Abelian
   group $G$ of cardinality $< \lambda$ when $\MA_{< \lambda}$ holds,
(not just proving ``strongly $\aleph_1$-free is enough");
 in particular $G$ is Whitehead if every subgroup of cardinality
 $\le \aleph_1$ is Whitehead; see \cite{EM02}.

\noindent
2) Follows by (3).

\noindent
3)  \Wilog \, let $M = M_2/M_1$ and $\pi \in \Hom(M_2,M)$
 be onto with kernel $M$. Let $\langle M'_\alpha:\alpha \le
 \alpha_*\rangle$ be as in \ref{g8}(4) for $M$ and let $N_\alpha =
 \pi^{-1}(M'_\alpha)$ so $\langle N_\alpha:\alpha \le
\alpha_*\rangle$ is purely increasing continuous, $N^*_0 =
M_1,N_{\alpha_*} = M_2$ and $N_{\alpha +1}/N_\alpha \in \bold K_*$.

Given $h_1 \in \Hom(M_1,{}_R R)$ by induction on $\alpha$ 
we choose $f_\alpha \in
\Hom(N_\alpha,{}_R R)$ increasing continuous with $\alpha$.  For
$\alpha = 0$ let $f_\alpha = h_1$, for $\alpha$ limit let $f_\alpha =
\cup\{f_\beta:\beta < \alpha\}$ and for $\alpha = \beta +1$ use
$N_\alpha/N_\beta \cong M'_\alpha/M'_\beta \in \bold K_*$ and the
choice of $\bold K_*$.  

Lastly, $h_2 = f_{\alpha_*}$ is as required.

\noindent
4) Easy.  
\end{PROOF}

\noindent
Now on those freeness contexts see \cite{Sh:52} or better
\cite{Sh:266} and history there.  Note that we shall in \S(3B) use
\ref{g44}(B)(c), and for this we need witnesses $\bold s$ from those
references. 
Recall (see \cite{Sh:266}):
\begin{definition}
\label{g38}
1) We say $\bold c$ is a pre-1-freeness context \when \, $\bold c$ consists of:
\mn
\begin{enumerate}
\item[$(a)$]   $\cU$ is a fixed set (we shall deal with subsets of it) or
$\gU$ is an algebra with universe $\cU$ (maybe with empty 
set of functions); let $c \ell_{\bold c}(A)$ be the closure of the 
set $A \subseteq \cU$ in the algebra $\gU$; but we may
sometime say $\cU$ instead of $\gU$
\sn
\item[$(b)$]    $\cF$ a family of pairs
 of subsets of $\cU$; we may write ``$A/B$ is free" or ``$A$ is 
free over $B$" for $(A,B)$ in $\cF$
\sn
\item[$(c)$]  $\chi,\mu$ will be fixed cardinals such that 
$|\tau(\gU)| \le \chi < \mu \le \infty$ and $(A,B) \in \cF \Rightarrow
|A| + |B| < \mu$, but if $\mu = \infty$
  (equivalently, $\mu > |\cU|$) we may omit it.
\end{enumerate}
\mn
2) We say ``for the $\chi$-majority of $X \subseteq
A,P(X)$" (for a property $P$) \when \, there is an algebra $\gB$ with
universe $A$ and $\chi$ functions, such that any $X \subseteq A$ closed under
those functions satisfies $P$.  We can replace $X \subseteq A$ by $X \in
{\cP}(A)$ or $X \in {\cP}_{< \lambda}(A)$: alternatively we may say
$\{X \subseteq A:P(A)\}$ is a $\chi$-majority.

\noindent
3) We say $\bold c$ is a freeness context \when \, in addition to (a),(b),(c)
of part (1) it satisfies the following; 
adding a superscript$^+$ to an axiom means that
whenever ``$A/B \in \cF$" or its negation appear in the assumption then we
demand $B$ to be free over $\emptyset$. Of course, $\cF_{\bold c} =
\bold F,\chi_{\bold c} = \chi$, etc.
\bigskip

\noindent
$\Ax II_\mu$:
\mn
\begin{enumerate}
\item[$(a)$]   $A/B$ is free \Iff \, $A \cup B/B$ is free
\sn
\item[$(b)_\mu$]   $A/B$ is free \when \, $|B| < \mu$ and $A \subseteq B$.
\end{enumerate}
\medskip

\noindent
$\Ax III$:  [2-transitivity]  If $A/B$ and $B/C$ are free and 
$C \subseteq B \subseteq A$ \then \, $A/C$ is free.
\medskip

\noindent
$\Ax IV_{\lambda,\mu}$:  [continuous transitivity]  If $A_i(i < \lambda)$
is increasing, for $i \le \gamma < \lambda$ we have 
$A_\gamma/ \bigcup\limits_{j < i} A_j \cup B$ is
free, $\lambda < \mu$ and $|\bigcup\limits_{i < \lambda} A_i| < \mu$
\then \, $\bigcup\limits_{i < \lambda} A_i/B$ is free. 

\noindent
Let $\Ax(IV_{< \lambda,\mu})$ mean $\theta < \lambda \Rightarrow
\Ax(IV_{\theta,\mu})$; and $\Ax IV_\mu$ will mean $\Ax IV_{<\mu,\mu}$ 
and $IV$ means $IV_\infty$.
\medskip

\noindent
$\Ax VI$:  If $A$ is free over $B \cup C$ \then \, for the 
$\chi_{\bold c}$-majority of $X \subseteq A \cup B \cup C$ the pair
$A \cap X/(B \cap X) \cup C$ is free.
\medskip

\noindent
$\Ax VII$:  If $A$ is free over $B$, \then \, for the 
$\chi_{\bold c}$-majority of $X \subseteq A \cup B$ the 
pair $A/(A \cap X) \cup B$ is free.

\noindent
4) We say $\bold c$ is a freeness$^+$ context \when \, in addition
\medskip

\noindent
$\Ax I^{**}$:  If $A/B$ is free and $A^* \subseteq A$ \then \,
$A^*/B$ is free.

\noindent
5) We say $\bold c$ is a $(\lambda,\kappa)$-freeness context \when \,:
in addition $\chi_{\bold c} \le \kappa$, AxI$^{**}$ and 
if $A/B$ is not $\bold c$-free and $|A| < \lambda$ then for some $A'
\subseteq A$ of cardinality $< \kappa,A'/B$ is not $\bold c$-free
\end{definition}

\begin{definition}
\label{g41}
For a freeness class $\bold K_*$ and $R$-module $G$ and $\chi \ge
|R| + \aleph_0$ (if equal then $\chi$ may be omitted) we define what we call a 
pre-freeness context $\bold c = \bold c_G = \bold c_{\bold K_*,G,\chi}$,
(this is proved in \ref{g18}) as the triple $(\cU,\gA,\cF,\chi) = 
(\cU_{\bold x},\gA_{\bold c},\cF_{\bold c},\chi_{\bold c})$ where:
\mn
\begin{enumerate}
\item[$(a)$]  $\cU = G$ as a set and $\gA$ is an expansion of $G$ by
  $F^{\gA}_a(a \in R)$ such that: if $G \models nx = y$ and $y' =
  F_a(y)$ then $G \models ax'=y$
\sn
\item[$(b)$]  $\cF = \{A/B:B,A \subseteq \cU$ and $\langle A \cup
B\rangle_G/\langle B \rangle_G$ is $\bold K_*$-free$\}$, we may say
$A/B$ is $\bold c$-free so $A/B$ stands for the formal quotient, 
so pedantically is just the pair $(A,B)$; where $\langle B \rangle_G$
is the minimal pure\footnote{Our modules are torsion free (i.e. $a \in
  R \wedge x \in G \Rightarrow (ax=0 \Leftrightarrow (a = 0_{\bbR}
  \vee x = 0_G))$, holds when $R =\bbZ$ this has no problem.
  Otherwise, recall we have expanded $G$ to an algebra $\gA$ such that $A = c
  \ell_{\cU}(A) \Rightarrow A \subseteq_{\pr} G$} 
sub-module of $G$ which includes $B$
\sn
\item[$(c)$]  $\chi_{\bold c} = \chi$ so $\ge |R| + \aleph_0$, 
(and $\mu_{\bold c}=\infty$).
\end{enumerate}
\end{definition}

\begin{fact}  
\label{g18}
Assume $\bold K_*$ is a $\lambda$-freeness class and $\chi = |R| + \aleph_0$.

\noindent
1) Being $\bold K_*$-free has compactness in singular cardinals $> \lambda$.

\noindent
2) For any $R$-module $G_*,\bold c = \bold c_{\bold K_*,G_*,\chi}$ 
defined in \ref{g41} above is a freeness context and satisfies
$\Ax I^{**}$.

\noindent
3) If $\bold K_*$ is moreover a $(\lambda,\kappa)$-freeness class (see
\ref{g8}(4)) \then \, $\bold c$ is a $(\lambda,\kappa)$-freeness
context (see \ref{g38}(5)).
\end{fact}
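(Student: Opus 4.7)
The plan is to treat the three parts in reverse order, since (3) is immediate, (2) reduces to matching definitions against the axioms of \cite{Sh:266}, and (1) is then a corollary of Shelah's Singular Compactness Theorem applied in this context. The organizing principle throughout is that a pair $(A,B)$ is $\bold c$-free exactly when the module quotient $\langle A\cup B\rangle_G/\langle B\rangle_G$ is $\bold K_*$-free, so each axiom translates into a closure property of the class $\bold K_*$ or a continuous-chain manipulation of its witnesses.

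For part (3): given $A/B$ not $\bold c$-free with $|A|<\lambda$, set $M:=\langle A\cup B\rangle_G/\langle B\rangle_G$, which lies in $\bold K_{<\lambda}\setminus\bold K_*$. The $(\lambda,\kappa)$-freeness-class hypothesis furnishes a pure submodule $N\subseteq_{\pr}M$ with $N\in\bold K_{<\kappa}\setminus\bold K_*$; lifting generators of $N$ to elements of $A$, I obtain $A'\subseteq A$ of cardinality $<\kappa$ with $\langle A'\cup B\rangle_G/\langle B\rangle_G$ containing $N$ as a pure submodule, hence (by the hereditary assumption on $\bold K_*$) not $\bold K_*$-free, so $A'/B$ is not $\bold c$-free as required.

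For part (2) I would verify the listed axioms by unwinding $\bold c$-freeness into a continuous chain $\langle M_\alpha:\alpha\le\alpha_*\rangle$ with consecutive quotients in $\bold K_*$. The monotonicity-type axioms (I$^{**}$, VI) follow from the hereditary property of $\bold K_*$ together with the fact that the relation depends only on the pure subgroups generated. The continuity axioms (II, III) are obtained by concatenating witnessing chains along a directed/increasing system, with limit stages handled by the very definition of $\bold K_*$-freeness as a continuous union. The ideal-type axiom (IV) uses the following observation: if $A/B$ is not $\bold c$-free then in any attempted chain the failure is localized, and after adjusting the enumeration one can isolate a sub-pair of the same total size that already fails, producing the required ideal of ``bad'' sets below $(A,B)$. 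Axiom VII is a cardinality-bound axiom tracking $\chi=|R|+\aleph_0$, which suffices because $\bold K_*$-freeness of a module of cardinality $\theta$ is witnessed by a chain of length $\le\theta$ and each step is determined by $|R|+\aleph_0$ parameters.

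Finally, for part (1), having verified axioms I$^{**}$, II, III, IV, VI, VII in (2) and (when applicable) the $(\lambda,\kappa)$-condition in (3), the Singular Compactness Theorem for freeness contexts proved in \cite{Sh:266} applies directly: if $\mu>\lambda$ is singular and $G$ is an $R$-module such that every pure submodule of $G$ of cardinality $<\mu$ is $\bold K_*$-free, then taking $\bold c=\bold c_{\bold K_*,G,\chi}$ and observing that $G/\emptyset$ is $\bold c$-free on every subset of size $<\mu$, the theorem yields that $G/\emptyset$ itself is $\bold c$-free, i.e.\ $G\in\bold K$ is $\bold K_*$-free. The main obstacle is the bookkeeping in part (2): matching the precise formulations of the axioms in \cite{Sh:266} (whose notation differs slightly) against the present setup; once the dictionary is fixed, each verification is a short chain-concatenation argument and involves no new idea beyond the hereditary and continuous-union structure already built into Definition~\ref{g8}.
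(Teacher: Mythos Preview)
Your overall strategy matches the paper's, which is extremely terse: part~(2) is ``Check'', part~(3) is ``Easy'', and part~(1) is ``By part~(2) and \cite{Sh:266}''. So reducing (1) to the Singular Compactness Theorem via the axiom verification in (2) is exactly what the paper does, and your sketch of (2) is a reasonable expansion of ``Check''.

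There is, however, a genuine gap in your argument for (3). You pass from ``$M$ not $\bold K_*$-free'' to ``$M\notin\bold K_*$'' (correct), apply the $(\lambda,\kappa)$-freeness-class hypothesis to obtain $N\subseteq_{\pr}M$ with $N\in\bold K_{<\kappa}\setminus\bold K_*$, and then conclude that $\langle A'\cup B\rangle_G/\langle B\rangle_G$, which contains $N$ purely, is ``not $\bold K_*$-free''. But the hereditary property of $\bold K_*$ says only that pure submodules of members of $\bold K_*$ lie in $\bold K_*$; it does \emph{not} say that a module containing a pure submodule outside $\bold K_*$ must fail to be $\bold K_*$-free. The notions ``in $\bold K_*$'' and ``$\bold K_*$-free'' are not interchangeable: a module can be $\bold K_*$-free (filterable with successive quotients in $\bold K_*$) without itself belonging to $\bold K_*$. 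What your argument actually needs is the converse-type closure: every $\bold K_*$-free module of cardinality $<\lambda$ already lies in $\bold K_*$, so that $N\notin\bold K_*$ forces $N$ not $\bold K_*$-free, and then one can take $A'$ with $\langle A'\cup B\rangle_G/\langle B\rangle_G = N$. This closure holds in the intended application ($\bold K_* =$ Whitehead groups of cardinality $<\lambda$, since Whitehead is preserved under extensions and increasing unions; cf.\ Claim~\ref{g16}), but it is not part of Definition~\ref{g8}(4) as written. Since the paper's proof of (3) is just ``Easy'', this extra closure is presumably being tacitly used; you should state it explicitly rather than invoke ``hereditary''.
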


\begin{PROOF}{\ref{g18}}
1) By part (2) and \cite{Sh:266}, see history there.

\noindent
2) Check.

\noindent
3) Easy.
\end{PROOF}

\begin{claim}
\label{g44}
If (A) then (B) where:
\mn
\begin{enumerate}
\item[$(A)$]  $(a) \quad \bold K_*$ is a $(\lambda,\kappa)$-freeness
class, see Definition \ref{g8}(4)
\sn
\item[${{}}$]  $(b) \quad G \in \bold K_*$ is $(\bold
K_*,\lambda)$-free not free, see Definition \ref{g8}(2A); 

\hskip25pt  fix such $G$ of minimal cardinality called $\mu$
\sn
\item[${{}}$]  $(c) \quad \bold c = \bold c_{\bold K_*,G,\kappa}$, see
Definition \ref{g41}(1)
\sn
\item[$(B)$]  there is a witness $\bold s$ for $G$ in the context $\bold c$
(see \cite[\S2]{Sh:161} and better \cite[\S3]{Sh:521}) such that:
\sn
\item[${{}}$]  $(a) \quad B^{\bold s}_{<>} = \emptyset,B^{\bold
s}_{<>^+} \subseteq G$ so $\lambda(<>,S_{\bold s}) \le \|M\|$
\sn
\item[${{}}$]  $(b) \quad$ if $\eta \notin \fin(S_{\bold s})$ then
  $\lambda_{\bold s,\eta} \ge \lambda$
\sn
\item[${{}}$]  $(c) \quad$ if $\eta \char 94 \langle \delta \rangle
 \in S_{\bold s}$ then $\cf(\delta) \notin [\kappa,\lambda)$
\sn
\item[${{}}$]  $(d) \quad$ if $\eta \in \fin(S_{\bold s})$ then
  $B_{\bold s,\eta^+} \backslash B_{\bold s,\eta}$ has cardinality $<
  \kappa$
\end{enumerate}
\end{claim}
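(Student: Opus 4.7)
The plan is to build the witness $\bold s$ by transfinite recursion on the tree $S_{\bold s}$, following the standard filtration machinery from \cite{Sh:161} and \cite[\S3]{Sh:521}, which applies here because Fact \ref{g18}(2) verifies that the context $\bold c$ satisfies the required freeness axioms $\Ax\,\mathrm{I}^{**},\mathrm{II},\mathrm{III},\mathrm{IV},\mathrm{VI},\mathrm{VII}$. Set the root $\langle\rangle$ with $B^{\bold s}_{\langle\rangle}=\emptyset$ and $B^{\bold s}_{\langle\rangle^+}$ the underlying set of $G$; this immediately yields clause (a). Since $G$ is $(\bold K_*,\lambda)$-free but not $\bold K_*$-free, and is of minimal cardinality $\mu$ with this property, Fact \ref{g18}(1) (compactness in singulars) forces $\mu$ regular and $\mu\ge\lambda$.

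At an internal node $\eta$, consider $B^{\bold s}_{\eta^+}/B^{\bold s}_\eta$, which by assumption is not $\bold c$-free. If $|B^{\bold s}_{\eta^+}\setminus B^{\bold s}_\eta|<\kappa$, declare $\eta$ a leaf, securing clause (d). Otherwise, filter $B^{\bold s}_{\eta^+}$ as a continuous $\subseteq$-increasing union of pure submodules $C_\alpha$, $\alpha<\lambda_\eta$, each of smaller cardinality, with $C_0=B^{\bold s}_\eta$, indexed by a regular cardinal $\lambda_\eta\ge\lambda$. Such a filtration exists with $\lambda_\eta\ge\lambda$ because $(\bold K_*,\lambda)$-freeness of $G$ forces every pure submodule of size $<\lambda$ to be $\bold K_*$-free, so non-$\bold c$-freeness of the quotient forces $|B^{\bold s}_{\eta^+}|$ to be large enough to admit such a filtration; this gives clause (b). By the axiomatic ``stationary witnessing'' argument (cf.\ \cite{Sh:266}), the set $S^*_\eta=\{\delta<\lambda_\eta : C_{\delta+1}/C_\delta$ is not $\bold c$-free$\}$ is stationary in $\lambda_\eta$, and its elements serve as the successor positions.

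For clause (c), restrict the chosen successors $\eta\char94\langle\delta\rangle$ to those $\delta\in S^*_\eta$ with $\cf(\delta)\notin[\kappa,\lambda)$. The key tool is the $(\lambda,\kappa)$-freeness property of $\bold c$ supplied by Fact \ref{g18}(3): if some $\delta\in S^*_\eta$ has $\cf(\delta)\in[\kappa,\lambda)$, pick a cofinal sequence in $\delta$ of length $\cf(\delta)<\lambda$; the union along it has cardinality $<\lambda$, so the non-freeness of the corresponding quotient can be localized to a witness of cardinality $<\kappa$ sitting at an earlier cofinal point of cofinality $<\kappa$. Hence $S^*_\eta$ can be thinned to a stationary subset all of whose elements have cofinality $<\kappa$ or $\ge\lambda$, and we only branch at those.

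The main obstacle is precisely this cofinality control for clause (c): reconciling the thinning to the good cofinality range with the requirement that the resulting sub-tree still captures all the non-freeness of $G$, while simultaneously preserving clause (b) at every internal node. The $(\lambda,\kappa)$-freeness-context property provided by Fact \ref{g18}(3) handles the ``push-down'' from $[\kappa,\lambda)$ to cofinality $<\kappa$; stationarity after thinning is preserved because removing a single cofinality range from a regular $\lambda_\eta\ge\lambda$ leaves a stationary set; and termination of the recursion along every branch is guaranteed by the strict decrease of $|B^{\bold s}_{\eta^+}|$, eventually forcing the leaf condition $|B^{\bold s}_{\eta^+}\setminus B^{\bold s}_\eta|<\kappa$ of clause (d).
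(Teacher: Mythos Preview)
Your overall plan---invoke Fact~\ref{g18} so that $\bold c$ satisfies the freeness axioms, then build a witness---is correct in spirit and matches the paper's one-line proof, which simply says ``By \ref{g18} we can apply \ref{g47}.'' The substance lives in \ref{g47}, and that is where your argument diverges from the paper and acquires gaps.

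\textbf{Clause (c) is not secured.} You claim that after forming the stationary set $S^*_\eta\subseteq\lambda_\eta$ you may thin to $\{\delta\in S^*_\eta:\cf(\delta)\notin[\kappa,\lambda)\}$ and retain stationarity, because ``removing a single cofinality range from a regular $\lambda_\eta\ge\lambda$ leaves a stationary set.'' This is false: $S^*_\eta$ itself could be contained in $S^{\lambda_\eta}_{\kappa'}$ for some fixed $\kappa'\in[\kappa,\lambda)$, and then the thinned set is empty. Your preceding ``push-down'' sentence does not repair this: finding a small $A'\subseteq C_{\delta+1}$ with $A'/C_\delta$ not $\bold c$-free does not produce a $\delta'<\delta$ of cofinality $<\kappa$ lying in $S^*_\eta$ for the \emph{same} filtration $\langle C_\alpha\rangle$. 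The paper's proof of \ref{g47} avoids this entirely: it does not thin, but instead quotes from \cite[\S3]{Sh:521} the existence of a \emph{uniform} disjoint witness with level-by-level parameters $\bar\lambda^*,\bar\kappa^*$, chooses one with $(n,\bar\lambda^*,\bar\kappa^*)$ lexicographically minimal, and then observes that by minimality every $\cf(\delta)$ occurring must equal some $\lambda(\nu,S_{\bold s})$ for $\nu\trianglerighteq\eta$; combined with $(*)_3$ this forces $\cf(\delta)\notin[\kappa,\lambda)$.

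\textbf{Clause (b) is not secured below the root.} You justify $\lambda_\eta\ge\lambda$ by saying ``$(\bold K_*,\lambda)$-freeness of $G$ forces every pure submodule of size $<\lambda$ to be $\bold K_*$-free.'' That argument works only at $\eta=\langle\rangle$. At deeper $\eta$ you are looking at $B^{\bold s}_{\eta^+}/B^{\bold s}_\eta$, a quotient, not a pure submodule of $G$; the hypothesis on $G$ says nothing directly about its size. If that quotient were non-$\bold c$-free of size in $[\kappa,\lambda)$ you could not filter it by a regular cardinal $\ge\lambda$. The paper's argument (the proof of $(*)_3$ in \ref{g47}) handles this, again, via the global minimality of the witness: if some $\lambda(\eta,S_{\bold s})<\lambda$, the $(\lambda,\kappa)$-freeness context yields a smaller witness, contradicting minimality of the chosen $\bold s$.

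In short, the mechanism you are missing is the \emph{minimality of the witness parameters}, which the paper imports wholesale from \cite{Sh:521}. A direct recursive construction with local thinning, as you propose, does not by itself deliver (b) and (c).
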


\begin{PROOF}{\ref{g44}}
By \ref{g18} we can apply \ref{g47} below.
\end{PROOF}

\noindent
\begin{claim}
\label{g47}
If (A) then (B) where:
\mn
\begin{enumerate}
\item[$(A)$]  $(a) \quad \bold c$ is a freeness context satisfying
$\Ax I^{**}$
\sn
\item[${{}}$]  $(b) \quad \bold c$ is $(\lambda,\kappa)$-freeness context
\sn
\item[${{}}$]  $(c) \quad A/B$ is $\lambda$-free not free pair;
 and with $|A|$ minimal
\sn
\item[$(B)$]  there is a witness $\bold s$ such that
\sn
\item[${{}}$]  $(a) \quad B^{\bold s}_{<>} = B$ and $B^{\bold s}_{<>^+} 
\subseteq A$ so $\lambda(<>,S_{\bold s}) \le |A|$ but $\ge \lambda$
\sn
\item[${{}}$]  $(b) \quad$ if $\eta \notin \fin(S_{\bold s})$ \then \,
 $\lambda_{\bold s,\eta} \ge \lambda$
\sn
\item[${{}}$]  $(c) \quad$ if $\eta \char 94 \langle \delta \rangle
  \in S_{\bold s}$ \then \, $\cf(\delta) \notin [\kappa,\lambda)$
\sn
\item[${{}}$]  $(d) \quad$ if $\eta \in \fin(S_{\bold s})$ then
  $B_{\bold s,\eta^+} \backslash B_{\bold s,\eta}$ has cardinality $<
  \kappa$.
\end{enumerate}
\end{claim}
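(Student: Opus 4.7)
The plan is to construct the witness tree $\bold s$ by recursion on its nodes, following the non-freeness witness construction of \cite{Sh:161} and \cite[\S3]{Sh:521}; the axiomatic bookkeeping reduces via \ref{g18} to the freeness-context axioms of (A)(a), and the distinctive work here lies in combining the $(\lambda,\kappa)$-freeness of $\bold c$ with the minimality of $|A|$ to force clauses (c) and (d).

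I would begin at the root by setting $B^{\bold s}_{\langle\rangle} = B$. The axioms of $\bold c$ yield a $\subseteq$-increasing continuous filtration $\langle A_\alpha : \alpha < \lambda^* \rangle$ of $A$ over $B$ with $\lambda^* = \cf(\lambda^*) \leq |A|$, each $|A_\alpha| < |A|$, and obstruction set $T = \{\alpha < \lambda^* : A_{\alpha+1}/A_\alpha \text{ is not } \bold c\text{-free}\}$ stationary in $\lambda^*$. Because $A/B$ is $\lambda$-free, every $A_\alpha$ with $|A_\alpha| < \lambda$ produces a free $A_\alpha/B$, forcing $\lambda^* \geq \lambda$ and delivering the lower bound in (a); setting $\lambda_{\bold s,\langle\rangle} := \lambda^*$ gives (b) at the root. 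For each $\delta \in T$ the pair $A_{\delta+1}/A_\delta$ is not $\bold c$-free and has $|A_{\delta+1}| < \lambda$, so the $(\lambda,\kappa)$-freeness assumption supplies $W_\delta \subseteq A_{\delta+1}$ of cardinality $< \kappa$ with $W_\delta/A_\delta$ not free. I would build the tree so that $\langle\delta\rangle$ is a maximal node with $B^{\bold s}_{\langle\delta\rangle} = A_\delta$ and the $\eta^+$-increment equal to $W_\delta$, thereby producing clause (d). Minimality of $|A|$ prevents any meaningful further recursion under $\langle\delta\rangle$; longer branches are kept in $S_{\bold s}$ only as a structural feature allowing future generalizations.

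The main obstacle is clause (c). After restricting $T$ to a stationary subset of some fixed cofinality $\sigma = \cf(\delta)$, suppose for contradiction that $\sigma \in [\kappa,\lambda)$. For each $\delta \in T$ pick a cofinal sequence $\langle \delta_i : i < \sigma \rangle$; since $|W_\delta| < \kappa \leq \sigma$ and $\sigma$ is regular, there is some $i(\delta) < \sigma$ with $W_\delta \cap A_\delta \subseteq A_{\delta_{i(\delta)}}$. Interleaving the enriched sets $A_{\delta_i} \cup (W_\delta \cap A_{\delta_i})$ into the original continuous filtration of $A$ over $B$ produces a refined continuous filtration whose obstruction set now sits at cofinality strictly below $\kappa$ (inherited from the cofinality of the $W_\delta$'s), not at $\sigma$. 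After finitely many iterations of this refinement, the obstruction set has cofinalities only in the admissible regions $< \kappa$ or $\geq \lambda$, giving (c). This refinement step -- which mixes the pressing-down lemma on $\lambda^*$, the minimality of $|A|$, and the $(\lambda,\kappa)$-freeness of $\bold c$ -- is the delicate point; the remaining verification follows the routine pattern of \cite{Sh:521}.
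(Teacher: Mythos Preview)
Your overall architecture differs from the paper's, and the difference matters most at clause~(c), where your argument has a genuine gap.

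The paper does not build the witness by hand. It quotes the existence of a \emph{disjoint minimal} witness $\bold s$ from \cite[\S3]{Sh:521} (and \cite{Sh:F13}), normalized so that there are sequences $\bar\lambda^* = \langle\lambda^*_\ell:\ell<n\rangle$, $\bar\kappa^* = \langle\kappa^*_\ell:\ell<n\rangle$ with $\lambda(\eta,S_{\bold s}) = \lambda^*_\ell$ and $\cf(\delta)=\kappa^*_\ell$ uniformly at level $\ell$, and with $(n,\bar\lambda^*,\bar\kappa^*)$ lexicographically minimal. Clause~(b) (equivalently $(*)_3$: every $\lambda(\eta,S_{\bold s})\ge\lambda$) is then proved by contradiction from the $(\lambda,\kappa)$-freeness of $\bold c$ together with minimality. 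Clause~(c) follows in one line from a structural fact about minimal witnesses: for $\eta\char94\langle\delta\rangle\in S_{\bold s}$, minimality forces $\cf(\delta)\in\{\lambda(\nu,S_{\bold s}):\eta\triangleleft\nu\in S_{\bold s}\}$, and these are all $\ge\lambda$ by $(*)_3$. The point is that the termination and the cofinality restriction are both encoded in the lexicographic minimality of the pre-existing witness, not in an ad-hoc refinement.

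Your refinement argument for (c) does not supply either ingredient. You assert that interleaving the sets $A_{\delta_i}\cup(W_\delta\cap A_{\delta_i})$ yields a refined filtration whose obstruction set ``sits at cofinality strictly below $\kappa$ (inherited from the cofinality of the $W_\delta$'s)'', but $W_\delta$ is a set of size $<\kappa$, not an ordinal; nothing you wrote forces the new obstruction points to have small cofinality --- they could land at ordinals of any cofinality below $\lambda^*$. Even granting one successful push-down, you give no well-founded measure guaranteeing that ``finitely many iterations'' terminate: each refinement may produce a new forbidden cofinality $\sigma'\in[\kappa,\lambda)$, and you have not explained why this cannot recur indefinitely. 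In the paper this is exactly what the minimality of $n$ and of $\bar\kappa^*$ buys. Your depth-$1$ construction also quietly assumes that the witness tree need not branch further, whereas the quoted theory allows depth $n$; the minimality of $|A|$ alone does not force $n=1$, because the pairs $A_{\delta+1}/A_\delta$ at level $1$ need not themselves be $\lambda$-free. To repair this you should either invoke the minimal-witness machinery as the paper does, or else supply a genuine decreasing invariant for your refinement process and prove that each step strictly decreases it.
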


\begin{PROOF}{\ref{g47}}
Now (see \cite[\S3]{Sh:521} or better yet see
\cite[4.5=Ld15]{Sh:F13}), there is a
disjoint witness $\bold s$ for $A/B$ being
non-$\bold c$-free.  So \wilog \, ($n = n(\bold s)$ is well defined
and) for some $\bar\lambda^* = \langle \lambda^*_\ell:\ell <
n\rangle,\bar\kappa^* = \langle \kappa^*_\ell:\ell < n\rangle$ we have
\mn
\begin{enumerate}
\item[$(*)_1$]
\begin{enumerate}
\item[(a)]  for each $\ell < n$ one of the following holds:
\begin{enumerate}
\item[$(\alpha)$]  $\lambda_\ell$ is a regular cardinal and 
$\eta \in S_{\bold s,\ell} \Rightarrow \lambda(\eta,S_{\bold s}) 
= \lambda^*_\ell$ 
\sn
\item[$(\beta)$]  $\lambda_\ell = *$ and $\eta \in S_{\bold s,\ell} 
\Rightarrow \lambda(\eta,S_{\bold s})$ is (possibly weakly)
inaccessible
\end{enumerate}
\sn
\item[(b)]   if for each $\ell < n$, either $\kappa_\ell$ is a
 regular cardinal and $\eta \in S_{\bold s,\ell} \wedge \delta \in 
W(\eta,S_{\bold s}) \Rightarrow \cf(\delta) = \kappa_\ell$
\underline{or}  $\kappa_\ell = *$ and $\lambda_{\ell +1}$ is $*$.
\end{enumerate}
\end{enumerate}
\mn
See more there; naturally \wilog \,
\mn
\begin{enumerate}
\item[$(*)_2$]  $\bold s$ is minimal which means that (fixing $A$ and $B$)
\sn
\begin{enumerate}
\item[$(a)$]  $n = n(\bold s)$ is minimal
\sn
\item[$(b)$]  under (a), $\bar\lambda^*$ is minimal under the
  lexicographical order
\sn
\item[$(c)$]  under (a) + (b), $\bar\kappa^*$ is minimal under the
  lexicopgrahical order.
\end{enumerate}
\end{enumerate}
\mn
Now
\mn
\begin{enumerate}
\item[$(*)_3$]  if $\eta \in \ini(S_{\bold s})$ then
  $\lambda(\eta,S_{\bold s}) \ge \lambda$.
\end{enumerate}
\mn
[Why?  Otherwise chose a counterexample $\eta$ with
$\lambda(\eta,S_{\bold s})$ minimal so
 by the definition of a witness as $\chi_{\bold c} \le \kappa$ we
  have $B^{\bold s}_{\eta^+}/B^{\bold s}_{\le \eta}$ is not free 
$B^{\bold s}_{\eta^+} \backslash B^{\bold s}_\eta$ has cardinality
  $\lambda(\eta,S_{\bold s})$ so $< \lambda$ and
$\alpha < \lambda(\eta,S_{\bold s}) \Rightarrow B^{\bold s}_{\eta
\char 94 <\alpha +1>} \backslash B^{\bold s}_{\eta \char 94 <\alpha>}$ 
 has cardinality $< \kappa$.  
Recalling ``$\bold c$ is a $(\lambda,\kappa)$-freeness
  context", see Definition \ref{g38}(5) and \ref{g18}(3), there is $C_\eta 
\subseteq B^{\bold s}_{\eta^+}$ of cardinality $\le \kappa$ such 
that $C_\eta/B^{\bold s}_{\le \eta}$ is not $\bold c$-free.  
So (follows by minimality of $\bold s$) we get contradiction, so
$\lambda(\eta,S_{\bold s}) \ge \lambda$ as promised in $(*)_3$.]
\mn
\begin{enumerate}
\item[$(*)_4$]  if $\eta \char 94 \langle \delta\rangle \in S_{\bold
  s}$ then $\cf(\delta) \notin [\kappa,\lambda)$.
\end{enumerate}
\mn
[Why?  As in the proof there for each $\eta \in S_{\bold s}$
satisfying $\cf(\delta) \ge \kappa$ by the
 minimality, $\cf(\delta) \in \{\lambda(\nu,S_{\bold s}):\nu \in
  S_{\bold s}$ satisfies $\eta \triangleleft \nu\}$, so $(*)_4$
  follows by $(*)_3$.]

So we are done.
\end{PROOF}
\bigskip

\subsection {The Main Independence Result} \label{The} \
\bigskip

\noindent
Below, it is reasonable to assume that the ring $R$ is $\bbZ$ and we assume
this is the nice version.  Note that we prove that a non-Whitehead group has a
non-free subgroup of small cardinality, not necessarily a
non-Whitehead one.  This is connected to the black boxes here having
cardinality (much) bigger than the amount of freedom.  For simplicity,
presently we deal with freeness only in hereditary cases.

Recall that $\mu$ is supercompact iff for every $\partial$ there
exists an elementary embedding $j:\bold V \rightarrow M$ such that $M$
is a transitive class satisfying ${}^\partial M \subseteq M$.
 
\begin{theorem}  
\label{f2}
If in $\bold V$ there are $\aleph_0$-many supercompact cardinals, 
\then \, in some
 forcing extension we have for $\mu_* = \aleph_{\omega_1 \cdot \omega}$:
\mn
\begin{enumerate}
\item[$\oplus_{\mu_*}$]
\begin{enumerate}
\item[(a)] if $G$ is a $\mu_*$-free Abelian group, 
\then \, $\Hom(G,\bbZ) \ne 0$, 
\sn
\item[(b)]   if $G \subseteq H$ are Abelian groups and 
$H/G$ is $\mu_*$-free and $h \in \Hom(G,\bbZ)$ \then \,
  $h$ can be extended to a homomorphism from 
$H$ to $\bbZ$, (this is an equivalent definition of ``$H/G$ is Whitehead", 
the reader may use it here as a definition).
\end{enumerate}
\end{enumerate}
\end{theorem}

\noindent
As usual in such proof, we make the collapse large cardinal into quite
small ones, so they cannot be real large but some remnant of their
early largeness remains and is enough for our purpose.  This is the
rationale of Definition \ref{f1g} below.
\begin{definition}
\label{f1g}
Let $\Pr_{\lambda_*,\mu_*,\kappa_*}$ means\footnote{May allow
$\lambda_* = \mu_*$ here and in \ref{f7} but then have to say somewhat
more.}
\mn
\begin{enumerate}
\item[(A)]
\begin{enumerate}
\item[(a)]  $\lambda_* > \mu_* > \kappa_*$
\sn
\item[(b)]  $\lambda_*,\kappa_*$ are regular uncountable cardinals
\sn
\item[(c)]  $\mu_*$ is a limit cardinal
\end{enumerate}
\sn
\item[$(B)$]  if $(a)$ then $(b)$ where
\begin{enumerate}
\item[(a)]
\begin{enumerate}
\item[$(\alpha)$]  $\lambda$ is a regular cardinal $\ge \lambda_*$
\sn
\item[$(\beta)$]  $\chi > \lambda$ and $\mu < \mu_*$ and $x \in \cH(\chi)$
\sn
\item[$(\gamma)$]  $S \subseteq \{\delta < \lambda:
\cf(\delta) < \kappa_*\}$ is a stationary subset of $\lambda$
\sn
\item[$(\delta)$]  $u_\alpha \in [\alpha]^{\le \mu}$
  for $\alpha \in S$
\end{enumerate}
\sn
\item[(b)]   there are a regular $\lambda' \in 
(\mu + \kappa_*,\mu_*)$ and an increasing continuous
sequence $\langle \alpha_\varepsilon:\varepsilon < \lambda'\rangle$ of
ordinals $< \lambda$ such that the set
$\{\varepsilon < \lambda':\alpha_\varepsilon \in S$ and $u_{\alpha_\varepsilon}
\subseteq \{\alpha_\zeta:\zeta < \varepsilon\}\}$ is a stationary
subset of $\lambda'$.
\end{enumerate}
\end{enumerate}
\end{definition}

\noindent
On the strong hypothesis above, see \cite{Sh:410}; a sufficient
condition for $\partial \ge \lambda \Rightarrow 2^\partial
= \partial^+$.
\begin{definition}
\label{f4d}
We say the universe $\bold V$ satisfies the strong hypothesis above
$\lambda$ \when \, if $\lambda > \lambda + \mu_1$ and for some
$\lambda \in (\mu_1,\lambda_1)$ we have $\cf([\chi]^{<
  \mu_1},\subseteq) \ge \lambda_2$, \then \, $\lambda_1 = \chi^+$ and
$\cf(\chi) < \mu_1$.
\end{definition}

\begin{theorem}  
\label{f5}
1) Assume in $\bold V_0$ there are infinitely many supercompact
cardinals $> \theta$ and $\theta = \cf(\theta) \in
[\aleph_1,\aleph_{\omega_1})$.  Then for some forcing notion $\bbQ$
 not adding new subsets to $\theta,\bold V_1 = \bold V^{\bbQ}_0$
satisfies $\Pr_{\lambda_*,\mu_*,\kappa_*}$ \where \, 
$\lambda_* = \cf(\lambda_*) = \mu^+_*,\mu_* = \aleph_{\theta \cdot
\omega}$ and $\kappa_* = \theta^+$.

\noindent
1A) We can (by preliminary forcing) assume that the 
universe $\bold V$ above satisfies also G.C.H. above $\theta$ (we use
just ``above $\mu_*$) and
$\diamondsuit^*_\lambda$ for every regular uncountable $\lambda$ above
$\mu_*$ and $|\bbQ_*| \le \mu^+_*$.

\noindent
2) If $\Pr_{\lambda_*,\mu_*,\kappa_*}$ holds in $\bold V$ and
 the c.c.c. forcing $\bbP$ has cardinality $\lambda_*$ \then \, in
   $\bold V^{\bbP}$ still $\Pr_{\lambda_*,\mu_*,\kappa_*}$ holds.

\noindent
3) Part (1) holds for any freeness$^+$ context (see Definition
\ref{g38}(3),(4)). 
\end{theorem}

\begin{PROOF}{\ref{f5}}
1), 1A)  Similarly to \cite[\S4,Th.1,pg.807]{MgSh:204}.  As there let
$\langle \kappa_n:n < \omega\rangle$ be an increasing sequence of
supercompact cardinals.  \Wilog \, GCH holds above $\mu = \sum\limits_{n}
\kappa_n$ (called $\kappa$ there) and $\diamondsuit^*_\chi$ holds for
$\chi = \cf(\chi) > \mu$.  Also for each $n$, the supercompactness of
$\kappa_n$ is preserved by forcing notions which are $\kappa_n$-directed
closed.

Choose $h_*:\omega \rightarrow \theta \cap \Reg$ such that for every
regular $\sigma < \theta$, the set $\{n:h_*(n) = \sigma\}$ is
infinite.  We proceed as there but now in the interval
$(\kappa_{n-1},\kappa_n)$, the set of cardinals we do not collapse has
order type $h_*(n)+1$.

\noindent
2),3)  Easy.
\end{PROOF}

\begin{PROOF}{\ref{f2}}  
\underline{Proof of \ref{f2}}

Let $\bold V_1 = \bold V^{\bbQ}_0$ be as in \ref{f5}(1)(1A) with $\theta
= \aleph_1$ so $\kappa_* = \aleph_2,\mu_* =
\aleph_{\omega_1 \cdot \omega},\lambda_* = \mu^+_*$  and let 
$\bbP$ be a c.c.c. forcing notion of cardinality
 $\lambda_*$ such that $\Vdash_{\bbP} ``\MA + 2^{\aleph_0} =
 \lambda_*"$.  The result follows from Theorem
\ref{f7} below.  Clause (d) there holds because $\bold V = 
\bold V^{\bbP}_1$, see \ref{f5}(2).
\end{PROOF}

\noindent
Recall that the strong hypothesissays that $\pp(\lambda) = \lambda^+$
for every singular cardinal $\lambda$; we rely on \S(3A).
\begin{theorem}
\label{f7}
The statement $\oplus_{\mu_*}$  from \ref{f2} holds \when \,
 $\bold V$ satisfies:
\mn
\begin{enumerate}
\item[$(a)$]   the statement
$\Pr_{\lambda_*,\mu_*,\kappa_*}$ from Definition \ref{f1g}
\sn
\item[$(b)$]  $\lambda_* = \lambda^{< \lambda_*}_* > \mu_*$
\sn
\item[$(c)$]  $\kappa_* = \aleph_2$
\sn
\item[$(d)$]  $\MA + 2^{\aleph_0} = \lambda_*$ and
$\bold V$ satisfies the strong hypothesis above $\lambda_*$, 
see \cite{Sh:410}. 
\end{enumerate}
\end{theorem}

\begin{PROOF}{\ref{f7}}
We rely on \ref{g2} - \ref{g47}.
First clause (b) of $\oplus_{\mu_*}$ implies clause (a); why?
because if $H$ is
a $\mu_*$-free Abelian group, let $x \in H
\backslash \{0_H\}$ and \wilog \, $x$ is not divisible by any $n \in
   \{2,3,\ldots\}$ hence $K := \bbZ x$ is a pure subgroup of $H$, 
let $h$ be an isomorphism from $K$ onto $\bbZ$.  As $H$ is
$\mu_*$-free easily also $H/K$ is $\mu_*$-free, hence by
$\oplus_{\mu_*}(b)$ there is a homomorphism $h^+$ from $H$ to
$\bbZ$ extending $h$ so $h^+(x) \ne 0_{\bbZ}$ hence $h^+ \in 
\Hom(H,\bbZ)$ is non-zero, as required.

So it suffices to prove clause (b) of $\oplus_{\mu_*}$.

Let $R = \bbZ$ and let $\bold K,\bold K_*$ be as in 
Claim \ref{g16} for $\lambda_*$ so $\bold K_*$ is a
hereditary $(\mu_*,\aleph_2)$-freeness class (see Definition
\ref{g8}(1),(1A),(4))
 by \ref{g16}(1).  So toward contradiction assume $G \in \bold K$ is 
a counterexample of minimal cardinality called $\lambda$ so 
$G$ is $\mu_*$-free.  To get a contradiction and finish the proof it
suffices to assume $G_1 \subseteq_{\pr} G_2,G_2/G_1 \cong G$ and $h_1 \in 
\Hom(G_1,\bbZ)$ and prove that there is $h_2
\in \Hom(G_2,\bbZ)$ extending $h_1$.  If $G$ is $\bold K_*$-free (see
Definition \ref{g8}(2)) then by 
\ref{g16}(3) a homomorphism $h_2$ as required exists.

Hence $G$ is not $\bold K_*$-free and let $\bold c
 = \bold c_{\bold K_*,G,\theta}$, see Definition \ref{g41} so by
 \ref{g18}(3), $\bold c$ is a $(\lambda_*,\kappa_*)$-freeness context
 and by \ref{g44}(2),(3) (with $\lambda_*,\kappa_*$ here standing for $\lambda,
\kappa$ there) there is a witness $\bold s$ as there.  By \ref{g16}(1)
we have $\lambda(\langle \rangle,S_{\bold s}) \ge \lambda_*$.

Let $\bold c_1 = \bold c_{\bold K_{\fr},G,\theta}$, it is a
$(\lambda,\aleph_1)$-freeness context.  (Why?  By \ref{g18} with
$K_{\fr}$ (see \ref{g16}) playing the role of $K_*$.)

Let $S_1 = W(<>,S_{\bold s})$ so for each $\delta \in S_1,
B^{\bold s}_{<\delta +1>} /B^{\bold s}_{<\delta>}$ is not free for
$\bold c$ so cannot be $\mu_*$-free for $\bold c_1$ 
(as we have chosen a counter-example of minimal cardinality).
Hence there is $A_\delta \subseteq B^{\bold s}_{< \delta +1>}$ of
cardinality $< \mu_*$ such that $A_\delta/B^{\bold s}_{<\delta>}$ is
not free for $\bold c_1$.

Let $B'_\delta \subseteq B^{\bold s}_{<\delta>}$ be of cardinality
$\le |A_\delta| + \kappa_*$ such that $B'_\delta 
\subseteq B' \subseteq B^{\bold s}_{<\delta>} \Rightarrow 
A_\delta/B'$ is not free for $\bold c_1$; exists by
properties of Abelian groups as $B^{\bold s}_{\langle \delta\rangle}
\subseteq B^{\bold s}_{\langle \delta +1\rangle}$ are free (for $\bold c_1$).

So for some $\mu < \mu_*$ the set $S_2 = \{\delta \in S_1:|A_\delta \cup
B'_\delta| + \kappa_* = \mu\}$ is a stationary subset of $\lambda
(\langle \rangle,S_{\bold s})$.  
Let $h$ be a one-to-one function from $\lambda(\langle
\rangle,S_{\bold s})$ onto $B^{\bold s}_{<\lambda>}$ and let
$C := \{\delta < \lambda(\langle \rangle,S_{\bold s}):h$ 
maps $\delta$ onto $B^{\bold s}_{\langle \delta\rangle}\}$,
it is a club of $\lambda(\langle \rangle,S_{\bold s})$ hence $S_3 := 
S_2 \cap C$ is a stationary subset of $\lambda(\langle
\rangle,S_{\bold s})$.  Also for $\delta \in S_3$ let 
$u_\delta = \{\alpha < \delta:h(\alpha) \in B'_\delta\}$.

By clause (B)(c) of \ref{g44}, i.e. the choice of $\bold s$, \wilog \,
one of the following occurs:
\mn
\begin{enumerate}
\item[$(a)$]   $\delta \in S_3 \Rightarrow \cf(\delta) = \kappa_1$ for
  some regular $\kappa_1 < \kappa_*$
\sn
\item[$(b)$]   every $\delta \in S_3$ has cofinality $\ge \lambda_*$.
\end{enumerate}
\medskip

\noindent
\underline{Case 1}:  $\kappa_1 < \kappa_*$ is as in clause (a)

Just use $\Pr_{\lambda_*,\mu_*,\kappa_*}$ for $\lambda,S_3,\langle
u_\delta:\delta \in S_3\rangle$ to prove $G$ is not a
$\mu_*$-free, a contradiction.
\medskip

\noindent
\underline{Case 2}:  Clause (b) above holds

For $\delta \in S_3$ clearly $|u_\delta| = |A_\delta \cup B'_\delta| +
 \kappa_* = \mu < \mu_* \le \lambda_* \le \cf(\delta)$ hence there is
$\gamma_\delta < \delta$ such that $u_\delta \subseteq
 \gamma_\delta$ hence for some $\gamma_* < \lambda$
the set $S_4 = \{\delta \in S_3:u_\delta \subseteq \gamma_*\}$ is
stationary.
\medskip

\noindent
\underline{Subcase 2A}:  $\cf([\gamma_*]^{\le \mu_*},\subseteq)$ is $<
\lambda(\langle \rangle,S_{\bold s})$.

So for some $u_* \in [\gamma_*]^{\le \mu}$ the set 
$S_5 = \{\delta < \lambda:u_\delta \subseteq u_*\}$ is a stationary
subset of $\lambda$.  Let $S_6 \subseteq S_5$ be of cardinality
$\mu^+$ and let $A^* = \cup\{A_\delta:\delta \in S_6\} \cup
\{h(\alpha):\alpha \in u_*\}$.  Clearly $A^* \subseteq G$ is of
cardinality $< \mu$ and $A^*/\emptyset$ is not free for $\bold c_1$.

So $G$ has a non-free subgroup of cardinality 
$< \mu_*$, contradiction to the assumption ``$G = G_2/G_1$ is
$\mu_*$-free".
\medskip

\noindent
\underline{Subcase 2B}:  $\cf([\gamma_*]^{\le \mu},\subseteq) \ge
 \lambda(\langle \rangle,S_{\bold s})$.

Note that because $\bold V$ satisfies the strong hypothesis 
(see \cite{Sh:410}), necessarily
for some cardinal $\partial$ of cofinality $< \kappa_*$ we have
$\lambda(\langle \rangle,S_{\bold s}) = \partial^+$.

In any case clearly for every $\alpha \in [\gamma_*,\lambda)$, letting
$\beta_\alpha = \min(S_4 \backslash \alpha)$, the pair
$A_{\beta_\alpha}/B_{<\alpha>}$ is not $\bold c_1$-free.  So renaming
\wilog \, $\alpha \ge \gamma_* \wedge \cf(\alpha) = \aleph_0
\Rightarrow \langle \alpha \rangle \in S$ and we continue as in Case
1, so this works also in Subcase 2A.
\end{PROOF}
\bigskip

\newpage

\bibliographystyle{amsalpha}
\bibliography{lista,listb,listx,listf,liste,listz}

\def\germ{\frak} \def\scr{\cal} \ifx\documentclass\undefinedcs
  \def\bf{\fam\bffam\tenbf}\def\rm{\fam0\tenrm}\fi 
  \def\defaultdefine#1#2{\expandafter\ifx\csname#1\endcsname\relax
  \expandafter\def\csname#1\endcsname{#2}\fi} \defaultdefine{Bbb}{\bf}
  \defaultdefine{frak}{\bf} \defaultdefine{=}{\B} 
  \defaultdefine{mathfrak}{\frak} \defaultdefine{mathbb}{\bf}
  \defaultdefine{mathcal}{\cal} \defaultdefine{implies}{\Rightarrow}
  \defaultdefine{beth}{BETH}\defaultdefine{cal}{\bf} \def\bbfI{{\Bbb I}}
  \def\mbox{\hbox} \def\text{\hbox} \def\om{\omega} \def\Cal#1{{\bf #1}}
  \def\pcf{pcf} \defaultdefine{cf}{cf} \defaultdefine{reals}{{\Bbb R}}
  \defaultdefine{real}{{\Bbb R}} \def\restriction{{|}} \def\club{CLUB}
  \def\w{\omega} \def\exist{\exists} \def\se{{\germ se}} \def\bb{{\bf b}}
  \def\equivalence{\equiv} \let\lt< \let\gt>
\providecommand{\bysame}{\leavevmode\hbox to3em{\hrulefill}\thinspace}
\providecommand{\MR}{\relax\ifhmode\unskip\space\fi MR }
\providecommand{\MRhref}[2]{%
  \href{http://www.ams.org/mathscinet-getitem?mr=#1}{#2}
}
\providecommand{\href}[2]{#2}
\begin{thebibliography}{She13b}

\bibitem[CG85]{CoGb85}
A.~L.~S. Corner and R\"udiger G\"obel, \emph{{Prescribing endomorphism
  algebras, a unified treatment}}, Proceedings of the London Mathematical
  Society. Third Series \textbf{50} (1985), 447--479.

\bibitem[EM02]{EM02}
Paul~C. Eklof and Alan Mekler, \emph{{Almost free modules: Set theoretic
  methods}}, North--Holland Mathematical Library, vol.~65, North--Holland
  Publishing Co., Amsterdam, 2002, Revised Edition.

\bibitem[Fuc73]{Fu}
Laszlo Fuchs, \emph{{Infinite Abelian Groups}}, vol. I, II, Academic Press, New
  York, 1970, 1973.

\bibitem[GHS]{GbHeSh:970}
Ruediger Goebel, Daniel Herden, and Saharon Shelah, \emph{{Prescribing
  endomorphism rings of $\aleph_n$-free modules}}, Journal of the European
  Mathematical Society \textbf{accepted}.

\bibitem[GS09]{GbSh:920}
Ruediger Goebel and Saharon Shelah, \emph{{$\aleph_n$-free modules with trivial
  dual}}, Results in Mathematics \textbf{54} (2009), 53--64.

\bibitem[GSS13]{GbShSm:981}
Rudiger Goebel, Saharon Shelah, and Lutz Struengmann, \emph{{$\aleph_n$-free
  Modules over complete discrete valuation domains with small dual}}, Glasgow
  Mathematical Journal \textbf{55} (2013), 369--380.

\bibitem[GT12]{GbTl12}
R\"udiger G\"obel and Jan Trlifaj, \emph{Approximations and endomorphism
  algebras of modules, vol. i, ii}, de Gruyter Expositions in Mathematics,
  Walter de Gruyter, 2012.

\bibitem[MS94]{MgSh:204}
Menachem Magidor and Saharon Shelah, \emph{{When does almost free imply free?
  (For groups, transversal etc.)}}, Journal of the American Mathematical
  Society \textbf{7} (1994), no.~4, 769--830.

\bibitem[Shea]{Sh:F13}
S.~Shelah, \emph{{Compactness spectrum}}.

\bibitem[Sheb]{Sh:309}
Saharon Shelah, \emph{{Black Boxes}}, 0812.0656. 0812.0656. arxiv:0812.0656.

\bibitem[Shec]{Sh:266}
\bysame, \emph{{Compactness in singular cardinals revisited}}, Sarajevo Journal
  of Mathematics \textbf{accepted}, arxiv:math.LO/1401.3175.

\bibitem[Shed]{Sh:950}
\bysame, \emph{{Dependent dreams: recounting types}}, arxiv:1202.5795.

\bibitem[Shee]{Sh:1045}
\bysame, \emph{{Quite free Abelian groups with prescribed endomorphism ring}},
  preprint.

\bibitem[Shef]{Sh:F1303}
\bysame, \emph{{Set theory with weak choice: $Ax_4$ and Abelian groups}}.

\bibitem[Sheg]{Sh:F918}
\bysame, \emph{{Theories with EF-Equivalent Non-Isomorphic Models II}}.

\bibitem[She74]{Sh:51}
\bysame, \emph{{Why there are many nonisomorphic models for unsuperstable
  theories}}, Proceedings of the International Congress of Mathematicians
  (Vancouver, B. C., 1974), vol.~1, Canad. Math. Congress, Montreal, Que, 1974,
  pp.~259--263.

\bibitem[She75a]{Sh:52}
\bysame, \emph{{A compactness theorem for singular cardinals, free algebras,
  Whitehead problem and transversals}}, Israel Journal of Mathematics
  \textbf{21} (1975), 319--349.

\bibitem[She75b]{Sh:54}
\bysame, \emph{{The lazy model-theoretician's guide to stability}}, Logique et
  Analyse \textbf{18} (1975), 241--308, Comptes Rendus de la Semaine d'Etude en
  Theorie des Modeles (Inst. Math., Univ. Catholique Louvain, Louvain-la-Neuve,
  1975).

\bibitem[She78]{Sh:a}
\bysame, \emph{{Classification theory and the number of nonisomorphic models}},
  {Studies in Logic and the Foundations of Mathematics}, vol.~92,
  {North-Holland Publishing Co., Amsterdam-New York, xvi+544 pp, \$62.25},
  1978.

\bibitem[She80]{Sh:98}
\bysame, \emph{{Whitehead groups may not be free, even assuming CH. II}},
  Israel Journal of Mathematics \textbf{35} (1980), 257--285.

\bibitem[She84a]{Sh:172}
\bysame, \emph{{A combinatorial principle and endomorphism rings. I. On
  $p$-groups}}, Israel Journal of Mathematics \textbf{49} (1984), 239--257,
  Proceedings of the 1980/1 Jerusalem Model Theory year.

\bibitem[She84b]{Sh:227}
\bysame, \emph{{A combinatorial theorem and endomorphism rings of abelian
  groups. II}}, Abelian groups and modules (Udine, 1984), CISM Courses and
  Lectures, vol. 287, Springer, Vienna, 1984, Proceedings of the Conference on
  Abelian Groups, Undine, April 9-14, 1984); ed. Goebel, R., Metelli, C.,
  Orsatti, A. and Solce, L., pp.~37--86.

\bibitem[She85]{Sh:161}
\bysame, \emph{{Incompactness in regular cardinals}}, Notre Dame Journal of
  Formal Logic \textbf{26} (1985), 195--228.

\bibitem[She90]{Sh:c}
\bysame, \emph{{Classification theory and the number of nonisomorphic models}},
  {Studies in Logic and the Foundations of Mathematics}, vol.~92,
  {North-Holland Publishing Co., Amsterdam, xxxiv+705 pp}, 1990.

\bibitem[She93a]{Sh:420}
\bysame, \emph{{Advances in Cardinal Arithmetic}}, Finite and Infinite
  Combinatorics in Sets and Logic, Kluwer Academic Publishers, 1993, N.W. Sauer
  et al (eds.). arxiv:0708.1979, pp.~355--383.

\bibitem[She93b]{Sh:410}
\bysame, \emph{{More on Cardinal Arithmetic}}, Archive for Mathematical Logic
  \textbf{32} (1993), 399--428, arxiv:math.LO/0406550.

\bibitem[She94]{Sh:g}
\bysame, \emph{{Cardinal Arithmetic}}, {Oxford Logic Guides}, vol.~29, {Oxford
  University Press}, 1994.

\bibitem[She96]{Sh:521}
\bysame, \emph{{If there is an exactly $\lambda$-free abelian group then there
  is an exactly $\lambda$-separable one}}, Journal of Symbolic Logic
  \textbf{61} (1996), 1261--1278, arxiv:math.LO/9503226.

\bibitem[She05]{Sh:775}
\bysame, \emph{{Middle Diamond}}, Archive for Mathematical Logic \textbf{44}
  (2005), 527--560, arxiv:math.LO/0212249.

\bibitem[She07]{Sh:883}
\bysame, \emph{{$\aleph_n$-free abelain group with no non-zero homomorphism to
  $\Bbb Z$}}, CUBO, A Mathematical Journal \textbf{9} (2007), 59--79,
  arxiv:math.LO/0609634.

\bibitem[She08]{Sh:897}
\bysame, \emph{{Theories with EF-Equivalent Non-Isomorphic Models}}, Tbilisi
  Mathematical Journal \textbf{1} (2008), 133--164, arxiv:math.LO/0703477.

\bibitem[She13a]{Sh:1008}
\bysame, \emph{{Non-reflection of the bad set for $\check I_\theta[\lambda]$
  and pcf}}, Acta Mathematica Hungarica \textbf{141} (2013), 11--35,
  arxiv:1206.2048.

\bibitem[She13b]{Sh:898}
\bysame, \emph{{PCF and abelian groups}}, Forum Mathematicum \textbf{25}
  (2013), 967--1038, arxiv:arxiv:0710.0157.

\bibitem[She16]{Sh:1005}
\bysame, \emph{{ZF + DC + AX$_4$}}, Archive for Mathematical Logic \textbf{55}
  (2016), 239--294, arxiv:math.LO/1411.7164.

\end{thebibliography}

\end{document}